\documentclass[12pt,twoside,leqno]{article}
%%%%%%%%%%%%%%%%%%%%%%%%%%%%%%%%%%%%%%%%%%%%%%%%%%%%%%%%%%%%%%%%%%%%%%%%%%%%%%%%%%%%%%%%%%%%%%%%%%%%%%%%%%%%%%%%%%%%%%%%%%%%%%%%%%%%%%%%%%%%%%%%%%%%%%%%%%%%%%%%%%%%%%%%%%%%%%%%%%%%%%%%%%%%%%%%%%%%%%%%%%%%%%%%%%%%%%%%%%%%%%%%%%%%%%%%%%%%%%%%%%%%%%%%%%%%
\usepackage{eurosym}
\usepackage[T1]{fontenc}
\usepackage{amsmath,amsthm}
\usepackage{amssymb,latexsym}
\usepackage{enumerate}

\setcounter{MaxMatrixCols}{10}
%TCIDATA{OutputFilter=LATEX.DLL}
%TCIDATA{Version=5.50.0.2953}
%TCIDATA{<META NAME="SaveForMode" CONTENT="1">}
%TCIDATA{BibliographyScheme=Manual}
%TCIDATA{LastRevised=Thursday, March 29, 2018 10:53:30}
%TCIDATA{<META NAME="GraphicsSave" CONTENT="32">}
%TCIDATA{Language=American English}
%TCIDATA{CSTFile=article.cst}

\theoremstyle{plain}
\newtheorem{thm}{Theorem}[section]
\newtheorem{lem}[thm]{Lemma}
\newtheorem{prop}[thm]{Proposition}
\newtheorem{cor}[thm]{Corollary}

\theoremstyle{definition}

\newtheorem{defi}[thm]{Definition}
\theoremstyle{remark}
\newtheorem{rem}[thm]{Remark}

\newtheorem{p}[thm]{Problem}

\newcommand{\mc}{\mathcal}
\begin{document}

\title{Hausdorff compactifications in ZF}
\author{Kyriakos Keremedis \\
%EndAName
Department of Mathematics\\
University of Aegean\\
Karlovassi, Samos 83200, Greece\\
E-mail: kker@aegean.gr\\
and\\
Eliza Wajch\\
Institute of Mathematics and Physics\\
Siedlce University of Natural Sciences and Humanities\\
3 Maja 54,\\
08-110 Siedlce, Poland\\
E-mail: eliza.wajch@wp.pl }
\maketitle

\begin{abstract}
For a compactification $\alpha X$ of a Tychonoff space $X$, the
algebra of all functions $f\in C(X)$ that are continuously extendable over $%
\alpha X$ is denoted by $C_{\alpha}(X)$. It is shown that, in a model of 
\textbf{ZF}, it may happen that a discrete space $X$ can have non-equivalent
Hausdorff compactifications $\alpha X$ and $\gamma X$ such that $%
C_{\alpha}(X)=C_{\gamma}(X)$. Amorphous sets are applied to a proof that Glicksberg's theorem that $\beta X\times \beta
Y$ is the \v Cech-Stone compactification of $X\times Y$ when $X\times Y$ is
a Tychonoff pseudocompact space is false in some models of $\mathbf{ZF}$. It is noticed that if all Tychonoff compactifications of locally compact spaces had $C^{\ast}$-embedded remainders, then van Douwen's choice principle would be satisfied.
Necessary and sufficient conditions for a set of continuous bounded real
functions on a Tychonoff space $X$ to generate a compactification of $X$ are
given in $\mathbf{ZF}$. A concept of a functional \v Cech-Stone compactification is investigated in the absence of the axiom of choice. 
\end{abstract}
\renewcommand{\thefootnote}{}

\footnote{2010 \emph{Mathematics Subject Classification}: Primary  54D35, 03E25; Secondary 54D15, 03E35}

\footnote{\emph{Key words and phrases}: Hausdorff compactification, Tychonoff space, axiom of choice, ultrafilter theorem, $C^{\ast}$-emdeddability,  amorphous set, Glicksberg's theorem,  independence results,  ZF.}

\renewcommand{\thefootnote}{\arabic{footnote}} \setcounter{footnote}{0}
%\date{Spring-2018}

\section{Introduction}

For a topological space $X$, we denote by $C(X)$ the algebra of all
continuous real functions on $X$ and by $C^{\ast }(X)$ the algebra of all
bounded continuous real functions on $X$.  We recall that
a \emph{compactification} of $X$ is an ordered pair $\langle \alpha X,\alpha
\rangle $ such that $\alpha X$ is a compact (not necessarily Hausdorff)
space, while $\alpha :X\rightarrow \alpha X$ is a homeomorphic embedding
such that $\alpha (X)$ is dense in $\alpha X$. Usually, a compactification $%
\langle \alpha X,\alpha \rangle $ of $X$ is denoted by $\alpha X$, the space 
$X$ is identified with $\alpha (X)$ and $\alpha $ is treated as the identity
map $\text{id}_{X}:X\rightarrow \alpha X$. Thus, in abbreviation, we can say
that a compactification of $X$ is a compact space $\alpha X$ such that $X$
is a dense subspace of $\alpha X$. The \emph{remainder of a compactification}
$\alpha X$ of $X$ is the set $\alpha X\setminus X$. We use the notation $%
\alpha X\thickapprox \gamma X$ to say that compactifications $\alpha X$ and $%
\gamma X$ of $X$ are equivalent, i.e. that there exists a homeomorphism $%
h:\alpha X\rightarrow \gamma X$ such that $h\circ \alpha =\gamma $. If there
exists a continuous map $f:\alpha X\rightarrow \gamma X$ such that $f\circ
\alpha =\gamma $, we write $\gamma X\leq \alpha X$. If $\alpha X$ and $%
\gamma X$ are Hausdorff compactifications of $X$, then $\alpha X\thickapprox
\gamma X$ if and only if $\alpha X\leq \gamma X$ and $\gamma X\leq \alpha X$.

Throughout this article, we assume the same system $\mathbf{ZF}$ of
set-theoretic axioms as in \cite{PW} to develop a theory of Hausdorff
compactifications without the axiom of choice \textbf{AC}. Of course, we
assume that $\mathbf{ZF}$ is consistent. We clearly denote theorems provable
in $\mathbf{ZF}$ or, respectively, in $\mathbf{ZF}$ enriched by an additional
axiom $\mathbf{A}$ by putting $\mathbf{[ZF]}$ or $\mathbf{[ZF+A]}$,
respectively, at the beginning of theorems. We use the same notation as in 
\cite{Her} for weaker forms of $\mathbf{AC}$, and we shall refer to a
corresponding form from \cite{HR}. In particular, $\mathbf{UFT}$ stands for
the ultrafilter theorem which states that, for every set $X$, each filter in
the power set $\mathcal{P}(X)$ can be enlarged to an ultrafilter (cf.
Definition 2.15 in \cite{Her} and Form 14 A in \cite{HR}). 

If $F$ is a set of mappings $f: X\to Y_f$, then the \emph{evaluation map} $e_F: X\to\prod_{f\in F} Y_f$ is defined  by: $e_{F}(x)(f)=f(x)$ for any $x\in X$ and $f\in F$ (cf. Definition 1.23 in \cite{Ch}).  For a Tychonoff space $X$, we denote by $\mathcal{E}(X)$ the collection of
all $F\subseteq C^{\ast }(X)$ such that $e_{F}: X\to\mathbb{R}^F$ is a homeomorphic embedding.
If $F\in \mathcal{E}(X)$, we denote by $e_{F}X$ the closure of $e_{F}(X)$ in 
$\mathbb{R}^{F}$. In $\mathbf{ZFC}$-theory of Hausdorff compactifications, it
is well known that every Hausdorff compactification $\alpha X$ of a non-empty
space $X$ is strictly determined by the algebra $C_{\alpha }(X)$ of all
continuous real functions on $X$ that are continuously extendable over $%
\alpha X$; more precisely, $\alpha X$ is equivalent with $e_{F}X$ for $%
F=C_{\alpha }(X)$. Moreover, if $\alpha X$ and $\gamma X$ are Hausdorff
compactifications of $X$, then it holds true in $\mathbf{ZFC}$ that $\alpha X$
is equivalent with $\gamma X$ if and only if $C_{\alpha }(X)=C_{\gamma }(X)$
(cf. Theorem 2.10 in \cite{Ch}). However, in $\mathbf{ZF}$, it is equivalent with $\mathbf{%
UFT}$ that, for every non-empty  Tychonoff space $X$ and for each $F\in 
\mathcal{E}(X)$, the space $e_{F}X$ is compact (see, for instance, Theorem
10.12 of \cite{PW} and Theorem 4.37 in \cite{Her}). In our article,  we pay a special
attention to the fact that there is a model of \textbf{ZF} in which there
are Tychonoff spaces that have Hausdorff, not completely regular
compactifications.

In Section 2, we prove that, in a model of $\mathbf{ZF}$, it may happen that
even a discrete space $X$ can have non-equivalent Hausdorff
compactifications $\alpha X$ and $\gamma X$ such that $C_{\alpha
}(X)=C_{\gamma }(X)$ and $\gamma X$ is not completely regular. A not
completely regular Hausdorff compactification of a Tychonoff space is called 
\emph{a strange compactification}. We show that all Hausdorff
compactifications with finite remainders of Tychonoff spaces are completely
regular in \textbf{ZF}. We prove that it may happen in a model of $\mathbf{ZF}$ that the remainder a Tychonoff compactification of a locally compact space can be not $C^{\ast}$-embedded in the compactification. If for every locally compact space $X$ and for every Tychonoff compactification $\alpha X$ of $X$  the remainder $\alpha X\setminus X$ is $C^{\ast}$-embedded in $\alpha X$, then van Douwen's choice principle (Form 119 in \cite{HR}) must hold.  We notice that an amorphous set exists if and only
if there exists an infinite discrete space $X$ whose all Hausdorff
compactifications are equivalent with the Alexandroff compactification of $X$.
Moreover, we show that Glicksberg's theorem on when the product of \v{C}%
ech-Stone compactifications of $X$ and $Y$ is the \v{C}ech-Stone
compactification of $X\times Y$ is false in some models of $\mathbf{ZF}$.

In Section 3, we show a number of necessary and sufficient conditions for $%
F\in\mathcal{E}(X)$ to have the property that the space $e_{F}X$ is compact
in \textbf{ZF}. We give a definition of a functional \v Cech-Stone
compactification and compare it in \textbf{ZF} with the standard notion of
the \v Cech-Stone compactification of a Tychonoff space. For $F\in\mathcal{E}%
(X)$, we list a number of new problems on the smallest sequentially closed
subalgebra of $C^{\ast}(X)$ which contains $F$ and all constant functions
from $C^{\ast}(X)$.

It is not obvious at all whether every Tychonoff space has a Hausdorff
compactification in $\mathbf{ZF}$. In 2016, E. Wajch asked whether there is
a model of $\mathbf{ZF}$ in which a non-compact metrizable Cantor cube can
fail to have a Hausdorff compactification (see Question 3.8 of \cite{W2}).
The following general problem is unsolved:

\begin{p}
Does there exist a model of $\mathbf{ZF}$ in which there is a Tychonoff
space which does not have any Hausdorff compactification?
\end{p}

\begin{prop} In every model of $\mathbf{ZF+UFT}$, every Tychonoff space has a Tychonoff compactification.
\end{prop}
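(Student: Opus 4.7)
The plan is to use the evaluation map into a power of $\mathbb{R}$, which is the standard construction, and to invoke the $\mathbf{UFT}$-equivalence already cited in the introduction to compactify it.

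First I would fix a non-empty Tychonoff space $X$ (the empty case is trivial) and take $F=C^{\ast}(X)$. The first step is to verify that $F\in\mathcal{E}(X)$, i.e., that the evaluation map $e_F:X\to\mathbb{R}^F$ is a homeomorphic embedding. This is exactly the defining property of a Tychonoff space: $C^{\ast}(X)$ separates points and separates points from closed sets, so $e_F$ is injective and a topological embedding. No choice is needed here, since the argument is pointwise.

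Next, I would invoke the result stated in the introduction (Theorem 10.12 of \cite{PW} and Theorem 4.37 in \cite{Her}), according to which $\mathbf{UFT}$ implies that, for every non-empty Tychonoff space $X$ and every $F\in\mathcal{E}(X)$, the closure $e_FX$ of $e_F(X)$ in $\mathbb{R}^F$ is compact. Applying this to our $F=C^{\ast}(X)$ yields a compact space $e_FX$ in which $e_F(X)$ sits as a dense subspace.

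Finally, I need to check that $e_FX$ is a Tychonoff compactification, not merely a Hausdorff one. Since products of Tychonoff spaces are Tychonoff in $\mathbf{ZF}$ (complete regularity is inherited by products by a direct pointwise argument, and Hausdorffness of a product of Hausdorff spaces is likewise choice-free), $\mathbb{R}^F$ is Tychonoff, and hence so is its subspace $e_FX$. Composing $e_F$ with its corestriction onto the dense subset $e_F(X)\subseteq e_FX$ produces a compactification $\langle e_FX, e_F\rangle$ of $X$ whose underlying space is Tychonoff, as required. The only nontrivial ingredient is the cited compactness of $e_FX$, which is precisely where $\mathbf{UFT}$ enters; everything else is routine and provable in pure $\mathbf{ZF}$.
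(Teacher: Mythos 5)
Your proof is correct and follows essentially the same route as the paper: both take $F=C^{\ast}(X)$, form the evaluation embedding $e_F$, and obtain compactness of $e_FX$ from $\mathbf{UFT}$ (the paper via the compactness of Tychonoff cubes, Theorem 4.70 of \cite{Her}, you via the cited equivalence for arbitrary $F\in\mathcal{E}(X)$ — an interchangeable citation for the same step). Your explicit check that $e_FX$ is Tychonoff as a subspace of $\mathbb{R}^F$ is a point the paper leaves implicit.
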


\begin{proof} Let $X$ be a Tychonoff space in a model $\mc{M}$ of $\mathbf{ZF+UFT}$. Since, by Theorem 4.70 of \cite{Her}, all Tychonoff cubes are compact in $\mc{M}$, the space $e_F X$ for $F=C^{\ast}(X)$ is compact. Thus, in $\mc{M}$,  the pair $\langle e_F X, e_F\rangle$ is a Tychonoff compactification of $X$.
\end{proof}

Although we are still unable to solve Problem 1.1, we offer some other
results relevant to it. Of course, if $X$ is an infinite $T_1$-space, it is
easy to show a compact $T_1$-space $Y$ such that $X$ is a dense subspace of $%
Y$ and $Y\setminus X$ is a singleton. To do this, for an infinite $T_1$
space $X$, it suffices to take a point $\infty\notin X$, put $%
Y=X\cup\{\infty\}$ and define a topology in $Y$ as the collection of all
open subsets of $X$ and of all sets of the form $Y\setminus A$ where $A$ is
a finite subset of $X$.

We use the same definition of the \v Cech-Stone compactification in $\mathbf{ZF}$ as in \cite%
{HK}:

\begin{defi}
A \emph{\v{C}ech-Stone compactification} of a Hausdorff space $X$ is a Hausdorff
compactification $\langle \beta X,\beta \rangle $ of $X$ such that, for
every compact Hausdorff space $K$ and for each continuous mapping $%
f:X\rightarrow K$, there exists a continuous mapping $\tilde{f}:\beta
X\rightarrow K$ such that $f=\tilde{f}\circ \beta $.
\end{defi}

Any two \v Cech-Stone compactifications of a space $X$ are equivalent, so,
if a Hausdorff space $X$ has a \v Cech-Stone compactification, we denote any
\v Cech-Stone compactification of $X$ by $\beta X$ and call it the \v Cech-Stone compactification of $X$.

\begin{rem}
In $\mathbf{ZF}$, if a Hausdorff space $X$ has its \v Cech-Stone compactification, then, for every
Hausdorff compactification $\alpha X$ of $X$, we have $\alpha X\leq\beta X$.
\end{rem}

Basic facts about Hausdorff compactifications in $\mathbf{ZFC}$ can be found,
for instance, in \cite{Ch}, \cite{GJ}, \cite{PorW} and \cite{En}. An essential role in $\mathbf{ZFC}$-theory of Hausdorff compactifications is played by the following notion of a normal (Wallman) base which can be found, for instance, in \cite{Ch} and \cite{PorW}; however, we use its slightly modified form given in Definition 1.7 of \cite{PW}:

\begin{defi}  A family $\mc{C}$ of subsets of a topological space $X$ is called a \emph{normal} or \emph{Wallman base} for $X$ if $\mc{C}$  satisfies the following conditions:
\begin{enumerate} 
\item[(i)] $\mc{C}$ is a base for closed sets of $X$;
\item[(ii)] if $C_1, C_2\in\mc{C}$, then $C_1\cap C_2\in\mc{C}$ and $C_1\cup C_2\in\mc{C}$;
\item[(iii)] for each set $A\subseteq X$ and for each $x\in X\setminus A$, if $A$ is a singleton or $A$ is closed in $X$,  then there exists $C\in\mc{C}$ such that $x\in C\subseteq X\setminus A$;
\item[(iv)] if $A_1, A_2\in\mc{C}$ and $A_1\cap A_2=\emptyset$, then there exist $C_1, C_2\in\mc{C}$ with $A_1\cap C_1=A_2\cap C_2=\emptyset$  and $C_1\cup C_2=X$.
\end{enumerate}
\end{defi}

For Wallman-type extensions, we use the same notation as in \cite{PW}. Namely, suppose that $\mc{C}$  is a Wallman base for $X$. Let $\mc{W}(X, \mc{C})$ denote the set of all ultrafilters in $\mc{C}$. For $A\in\mc{C}$, we put
$$[A]_{\mc{C}}=\{p\in\mc{W}(X, \mc{C}): A\in p\}.$$
The  \emph{Wallman space} of $X$ corresponding to $\mc{C}$ is denoted by $\mc{W}(X, \mc{C})$ and it is the set $\mc{W}(X, \mc{C})$ equipped with the topology having the collection $\{ [A]_{\mc{C}}: A\in\mc{C}\}$ as a closed base. The canonical embedding of $X$ into $\mc{W}(X, \mc{C})$ is the mapping $h_{\mc{C}}: X\to\mc{W}(X, \mc{C})$ defined by the equality $h_{\mc{C}}(x)=\{A\in \mc{C}: x\in A\}$ for each $x\in X$. The pair $\langle\mc{W}(X, \mc{C}), h_{\mc{C}}\rangle$ is called \emph{the Wallman extension} of $X$ corresponding to $\mc{C}$ and, for simplicity, this extension is denoted also by $\mc{W}(X, \mc{C})$. In the case when $X$ is a discrete space, the Wallman space $\mc{W}(X, \mc{P}(X))$ corresponding to the power set $\mc{P}(X)$ is usually called the \emph{Stone space} of $X$ and it is denoted by $\mc{S}(X)$.

 A topological space $X$ is called \emph{semi-normal} if $X$ has a Wallman base (cf. \cite{F} and \cite{PW}). In the light of Theorem 2.8 of \cite{PW}, $\mathbf{UFT}$ is an equivalent of the following sentence: For every semi-normal space $X$ and for every normal base $\mc{C}$ of $X$, the Wallman space $\mc{W}(X, \mc{C})$ is compact. A Hausdorff compactification $\alpha X$ of a space $X$ is called a \emph{Wallman-type compactification} of $X$ if there exists a normal base $\mc{C}$ of $X$ such that the space $\mc{W}(X, \mc{C})$ is compact and the compactification $\mc{W}(X, \mc{C})$ of $X$ is equivalent with $\alpha X$. It was proved in \cite{U} that not all Hausdorff compactifications in $\mathbf{ZFC}$ are of Wallman type; however, a satisfactory solution to the following problem is unknown:
 
 \begin{p} Can it be proved in $\mathbf{ZF}$ that there are discrete spaces that have Hausdorff compactifications which are not of Wallman type?
 \end{p}
 
\begin{rem} It is worth to notice that \v Sapiro's result given in \cite{\v S}  that all Hausdorff compactifications are of Wallman type if and only if all Hausdorff compactifications of discrete spaces are of Wallman type is provable in $\mathbf{ZF}$. Historical remarks about the results of \cite{\v S} are given in \cite{ChF}.
 \end{rem}

For a topological space $X$, let $\mc{Z}(X)$ stand for the collection of all zero-sets in $X$. Then $\mc{Z}(X)=\{ f^{-1}(0); f\in C^{\ast}(X)\}$. Of course, $\mc{Z}(X)$ is a normal base for $X$ if and only if $X$ is a Tychonoff space.  It is well known that, in $\mathbf{ZFC}$,  if $X$ is a Tychonoff space, then its \v{C}ech-Stone compactification is of Wallman type because it is equivalent with the Wallman extension of $X$ corresponding to the normal base $\mc{Z}(X)$ (cf., e.g., \cite{Ch}, \cite{F},  \cite{GJ}, \cite{PorW} and \cite{PW}). That the Wallman space $\mc{W}(X, \mc{Z}(X))$ is compact for every Tychonoff space $X$ is an equivalent of $\mathbf{UFT}$ (see, e.g., Theorem 2.8 of \cite{PW}). Some relatively new results on Hausdorff compactifications in $\mathbf{ZF}$ have been obtained in \cite{HerK}, \cite{HK}, \cite{HKT},  \cite{K2}, \cite{KT} and, for Delfs-Knebusch generalized topological spaces (applicable to topological spaces),  in \cite{PW}. In \cite{AL}, there is a well-written chapter on a history of Hausdorff compactifications in $\mathbf{ZFC}$ (see \cite{ChF}).  Unfortunately, not much is known about Hausdorff compactifications in $\mathbf{ZF}$. In this article, to start a systematic study of Hausdorff compactifications in $\mathbf{ZF}$,  we put in order basic notions concerning them, as well as we  show newly discovered important differences between $\mathbf{ZF}$-theory and $\mathbf{ZFC}$-theory of Hausdorff compactifications. 

\section{Strange Hausdorff compactifications}

\begin{defi}
Let $\alpha X$ be a Hausdorff compactification of a Tychonoff space $X$. We
say that:

\begin{enumerate}
\item[(i)] $\alpha X$ is \emph{generated by a set of functions} if there
exists $F\in\mathcal{E}(X)$ such that the space $e_F X$ is compact and the
compactifications $\alpha X$ and $e_F X$ of $X$ are equivalent;

\item[(ii)] $\alpha X$ is \emph{generated by a set} $F\in\mathcal{E}(X)$ if the
space $e_F X$ is compact, while the compactifications $e_F X$ and $\alpha X$
of $X$ are equivalent;

\item[(iii)] $\alpha X$ is \emph{strange} if it is not generated by a set of
functions.
\end{enumerate}
\end{defi}

\begin{defi} For a topological space $X$, we say that:
\begin{enumerate}
\item[(i)]  $\mathbf{UL}(X)$ holds or, equivalently, $X$ \emph{satisfies Urysohn's Lemma} or, equivalently, $X$ is a U space if, for each pair of disjoint closed subsets $A, B$ of $X$ there exists $f\in C^{\ast}(X)$ such that $A\subseteq f^{-1}(0)$ and $B\subseteq f^{-1}(1)$;
\item[(ii)] $\mathbf{TET}(X)$ holds or, equivalently, $X$ \emph{satisfies Tietze's Extension Theorem} or, equivalently, $X$ is a T space if, for each closed subspace $P$ of $X$, every function $f\in C(P)$ is extendable to a function from $C(X)$.
\end{enumerate}
\end{defi}

Let us denote Form 78 of \cite{HR} (Urysohn's Lemma) by $\mathbf{UL}$. Then $\mathbf{UL}$ is the sentence: $\mathbf{UL}(X)$ holds for every normal space $X$.  We denote Form 375 of \cite{HR} (Tietze-Urysohn Extension Theorem) by $\mathbf{TET}$. Then  $\mathbf{TET}$ is the sentence: $\mathbf{TET}(X)$ holds for every normal space $X$. The principle of dependent choices (Form 43 in \cite{HR}) is denoted by $\mathbf{DC}$.
 
\begin{rem}  In \cite{HKRR}, spaces that satisfy Urysohn's Lemma were called $\text{U}$ spaces, while spaces that satisfy Tietze's Extension Theorem were called $\text{T}$ spaces,  $\mathbf{UL}$ was denoted by $\mathbf{NU}$, while $\mathbf{TET}$ by $\mathbf{NT}$. Of course, every T space is a U space and all U spaces are normal. In \cite{HKRR},  $\mathbf{NU}$ ($\mathbf{NT}$, resp.) is an abbreviation to: \emph{"Every normal space is a} U \emph{space."} (\emph{"Every normal space is a} T \emph{space."}, resp.). However, in this article, we find it  more natural to denote Urysohn's Lemma by $\mathbf{UL}$ and Tietze's Extension Theorem by $\mathbf{TET}$. It is well known that $\mathbf{UL}$ and $\mathbf{TET}$ are independent of $\mathbf{ZF}$.  Of course, it is true in $\mathbf{ZF}$ that if $X$ is a topological space such that $\mathbf{TET}(X)$ holds, then $\mathbf{UL}(X)$ also holds. It is known that $\mathbf{ZF+DC}$ implies both $\mathbf{UL}$ and $\mathbf{TET}$ (cf. entries (43, 78) and (43, 375) on pages 339 and 386 in \cite{HR}). Hence, in $\mathbf{ZF+DC}$, a topological space $X$ satisfies $\mathbf{UL}(X)$ if and only if $\mathbf{TET}(X)$ holds. In \cite{HKRR}, it was shown that there is a model $\mathcal{M}$ of $\mathbf{ZF}$ in which there is a compact Hausdorff space $X$ such that $\mathbf{UL}(X)$ holds and $\mathbf{TET}(X)$ fails in $\mathcal{M}$.  However, it is an open question, already posed in \cite{HKRR},  whether $\mathbf{UL}$ implies $\mathbf{TET}$. 
\end{rem}

We can easily obtain the following results:

\begin{prop} $\mathbf{[ZF+UL]}$ An arbitrary Hausdorff compactification is not strange.
\end{prop}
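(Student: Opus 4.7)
The plan is to take $F = C_{\alpha}(X)$ and show that this set generates $\alpha X$. Since $\alpha X$ is compact, every continuous extension $\tilde{f}$ of an $f \in F$ is bounded, so $F \subseteq C^{\ast}(X)$. The argument will reproduce the standard $\mathbf{ZFC}$ proof, invoking $\mathbf{UL}$ for each instance of Urysohn-type separation. A preliminary step is to verify in $\mathbf{ZF}$ that $\alpha X$ is normal. Regularity follows from the fact that, in any compact Hausdorff space, the intersection of all closed neighborhoods of a point $x$ equals $\{x\}$ (because, for each $y \ne x$, the Hausdorff axiom yields a closed neighborhood of $x$ missing $y$), whence for a closed set $B$ with $x \notin B$ the open cover of the compact set $B$ by complements of closed neighborhoods of $x$ admits a finite subcover. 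Normality then follows by the analogous compactness argument applied to the collection of all open sets whose closures miss $B$. Under $\mathbf{UL}$, normality of $\alpha X$ upgrades to the existence of separating $[0,1]$-valued continuous functions for any two disjoint closed subsets of $\alpha X$.

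Next, I would check that $F \in \mathcal{E}(X)$. For distinct $x, y \in X$, applying $\mathbf{UL}$ to the disjoint closed sets $\{x\}, \{y\}$ in $\alpha X$ and restricting the resulting function to $X$ gives an $f \in F$ with $f(x) \ne f(y)$, so $e_{F}$ is injective. For $x \in X$ and a closed $C \subseteq X$ with $x \notin C$, set $K = \overline{C}^{\alpha X}$; since $C$ is closed in $X$, one has $K \cap X = C$, so $x \notin K$, and $\mathbf{UL}$ applied to $\{x\}$ and $K$ in $\alpha X$ produces, after restriction, an $f \in F$ with $f(x) = 0$ and $f(C) \subseteq \{1\}$, hence $e_{F}(x) \notin \overline{e_{F}(C)}$. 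Thus $e_{F}$ is a homeomorphic embedding. To see that $e_{F}X$ is compact and equivalent to $\alpha X$, define $E \colon \alpha X \to \mathbb{R}^{F}$ by $E(p)(f) = \tilde{f}(p)$. Each coordinate of $E$ is continuous, so $E$ is continuous, and $E$ extends $e_{F}$; compactness of $\alpha X$ combined with density of $X$ in $\alpha X$ then gives $E(\alpha X) = \overline{e_{F}(X)} = e_{F}X$. In particular, $e_{F}X$ is compact. A third appeal to $\mathbf{UL}$, applied to any two distinct points of $\alpha X$, shows $E$ injective, and a continuous bijection from a compact space onto a Hausdorff space is a homeomorphism, so $E$ witnesses $\alpha X \thickapprox e_{F}X$. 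Hence $\alpha X$ is generated by $F$ and so is not strange.

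The only step that requires care is the $\mathbf{ZF}$ verification that compact Hausdorff spaces are normal, because the textbook $\mathbf{ZFC}$ argument selects a separating neighborhood at each point. This is circumvented by taking the union of \emph{all} eligible open sets before extracting a finite subcover, so no form of choice is invoked. Every subsequent application of $\mathbf{UL}$ produces only a single continuous function and thus raises no selection issue, which is why $\mathbf{UL}$ alone suffices for the conclusion.
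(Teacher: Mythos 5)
Your proof is correct and follows exactly the route the paper intends: the paper states this proposition without proof (as one of the results it says "we can easily obtain"), the intended argument being that compact Hausdorff spaces are normal in $\mathbf{ZF}$, so $\mathbf{UL}$ makes $\alpha X$ completely regular, whence $\alpha X$ is generated by $C_{\alpha}(X)$ by Propositions 2.5 and 2.6. Your write-up simply inlines the content of those two propositions (the evaluation-map argument with $F=C_{\alpha}(X)$ and the map $E$), and your care about avoiding choice in the normality argument is exactly the right point to check.
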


\begin{prop}
$\mathbf{[ZF]}$ A Hausdorff compactification $\alpha X$ of a non-empty 
Tychonoff space $X$ is not strange if and only if $\alpha X$ is completely
regular.
\end{prop}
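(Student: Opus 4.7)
The plan is to prove the two implications separately, both using constructions that go through in pure $\mathbf{ZF}$.

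For the forward direction, suppose $\alpha X$ is not strange, so by Definition 2.1 there exists $F\in\mathcal{E}(X)$ such that $e_F X$ is compact and equivalent with $\alpha X$. It suffices to show that every product $\mathbb{R}^F$ is completely regular in $\mathbf{ZF}$, since $e_F X\subseteq\mathbb{R}^F$ inherits complete regularity. Given a point $p\in\mathbb{R}^F$ and a closed $A\subseteq\mathbb{R}^F$ with $p\notin A$, I would pick a basic open neighborhood $U=\prod_{f\in F_{0}}U_{f}\times\prod_{f\in F\setminus F_{0}}\mathbb{R}$ of $p$ disjoint from $A$ with $F_{0}$ finite; since $F_0$ is finite I can (with no choice needed) pick functions $\phi_{f}\in C(\mathbb{R},[0,1])$ for $f\in F_0$ with $\phi_{f}(p(f))=0$ and $\phi_{f}\equiv 1$ off $U_{f}$, and then $\Phi(y)=\max_{f\in F_{0}}\phi_{f}(y(f))$ separates $p$ from $A$. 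Transporting via the homeomorphism $e_F X\approx \alpha X$ shows $\alpha X$ is completely regular.

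For the reverse direction, assume $\alpha X$ is completely regular and set $F=C_{\alpha}(X)$; note $F\subseteq C^{\ast}(X)$ because any continuous extension $\tilde{f}\colon\alpha X\to\mathbb{R}$ has compact image. For every $f\in F$ the extension $\tilde{f}$ is \emph{unique}, since $X$ is dense in the Hausdorff space $\alpha X$, so the assignment $f\mapsto\tilde{f}$ is a well-defined function that does not invoke any choice principle. I would then define $\tilde{e}_F\colon\alpha X\to\mathbb{R}^F$ by $\tilde{e}_F(y)(f)=\tilde{f}(y)$, which is continuous and extends $e_F$. Complete regularity of $\alpha X$ guarantees that $\{\tilde{f}:f\in F\}$ separates points from closed sets of $\alpha X$: indeed, for $x\in X$ and a closed $A\subseteq X$ with $x\notin A$, one has $x\notin\overline{A}^{\alpha X}$ because $A=\overline{A}^{\alpha X}\cap X$, and a complete-regularity witness $g\in C(\alpha X)$ restricts to an element $f=g|_{X}\in F$ separating $x$ from $A$. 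This simultaneously shows that $e_F$ is a homeomorphic embedding (so $F\in\mathcal{E}(X)$) and that $\tilde{e}_F$ is injective. Being a continuous injection from the compact space $\alpha X$ into the Hausdorff space $\mathbb{R}^{F}$, $\tilde{e}_F$ is a homeomorphism onto its image; since $e_F(X)=\tilde{e}_F(X)$ is dense in $\tilde{e}_F(\alpha X)$ and the latter is compact, hence closed, one gets $e_F X=\tilde{e}_F(\alpha X)$. Thus $e_F X$ is compact and equivalent with $\alpha X$, so $\alpha X$ is not strange.

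The step I expect to require the most care is the $\mathbf{ZF}$-scrupulous construction of $\tilde{e}_F$ in the reverse direction: at first glance it seems to demand choosing an extension $\tilde{f}$ for each $f\in F$ simultaneously, which could smuggle in a choice principle. The key observation saving us is that extensions into a Hausdorff space from a dense subset are unique whenever they exist, so the map $f\mapsto\tilde{f}$ is canonically defined. The rest of the argument relies only on finitary uses of complete regularity of $\mathbb{R}$ and the standard fact (provable in $\mathbf{ZF}$) that a continuous bijection from a compact space onto a Hausdorff space is a homeomorphism, neither of which requires $\mathbf{AC}$.
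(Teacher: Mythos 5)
Your proof is correct. The paper states this proposition without proof (it is listed among results the authors say "we can easily obtain"), but your argument --- complete regularity of $e_F X\subseteq\mathbb{R}^F$ transported through the equivalence for the forward direction, and the canonical evaluation map $\tilde{e}_F$ built from the unique continuous extensions of the members of $F=C_{\alpha}(X)$ for the converse, with the explicit observation that uniqueness of extensions avoids any appeal to choice --- is exactly the standard route the authors intend, as confirmed by the immediately following Proposition 2.6, which asserts that a completely regular $\gamma X$ is generated by $C_{\gamma}(X)$.
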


\begin{prop}
$\mathbf{[ZF]}$ If a Hausdorff compactification $\gamma X$ of a non-empty 
topological space $X$ is completely regular, then $\gamma X$ is generated by 
$C_{\gamma }(X)$.
\end{prop}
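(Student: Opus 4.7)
The plan is to reduce the statement to the compactness of the canonical evaluation embedding of $\gamma X$ itself. Set $F=C_{\gamma}(X)$. Since $\gamma X$ is compact, any continuous extension over $\gamma X$ of a function $f\in F$ is automatically bounded, so it lies in $C^{\ast}(\gamma X)$; since $X$ is dense in the Hausdorff space $\gamma X$, that extension is unique. Thus the restriction map $\Phi: C^{\ast}(\gamma X)\to F$, $\Phi(g)=g\upharpoonright X$, is a well-defined bijection (surjectivity is by the definition of $C_{\gamma}(X)$, injectivity by density). No choice is needed here.

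Next, I would use that $\gamma X$ is Tychonoff and compact: by the definition of complete regularity, $C^{\ast}(\gamma X)$ separates points from closed sets in $\gamma X$, so the evaluation map $e:=e_{C^{\ast}(\gamma X)}:\gamma X\to\mathbb{R}^{C^{\ast}(\gamma X)}$ is a homeomorphic embedding in $\mathbf{ZF}$. Because $\gamma X$ is compact, $e(\gamma X)$ is compact, hence closed in the Hausdorff product. Now let $\pi:\mathbb{R}^{C^{\ast}(\gamma X)}\to\mathbb{R}^{F}$ be the canonical homeomorphism induced by the bijection $\Phi$, namely $\pi(\varphi)(\Phi(g))=\varphi(g)$. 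A direct check gives $e_{F}(x)(\Phi(g))=\Phi(g)(x)=g(x)=e(x)(g)$ for every $x\in X$ and $g\in C^{\ast}(\gamma X)$, so
\[
e_{F}\upharpoonright X \;=\; \pi\circ (e\upharpoonright X).
\]
Since $\pi$ is a homeomorphism and $e\upharpoonright X$ is a homeomorphic embedding, $e_F$ is a homeomorphic embedding, so $F\in\mathcal{E}(X)$.

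Finally, I would identify $e_{F}X$ with $\pi(e(\gamma X))$. The set $\pi(e(\gamma X))$ is compact, hence closed in $\mathbb{R}^F$, and it contains $\pi(e(X))=e_{F}(X)$, so $e_{F}X\subseteq \pi(e(\gamma X))$. Conversely, density of $X$ in $\gamma X$ yields $e(\gamma X)\subseteq\overline{e(X)}$, so by continuity of $\pi$,
\[
\pi(e(\gamma X))\;\subseteq\;\pi(\overline{e(X)})\;\subseteq\;\overline{\pi(e(X))}\;=\;\overline{e_{F}(X)}\;=\;e_{F}X,
\]
giving equality and, in particular, compactness of $e_{F}X$. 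The map $h:=\pi\circ e:\gamma X\to e_{F}X$ is then a continuous bijection from the compact space $\gamma X$ onto the Hausdorff space $e_{F}X$, hence a homeomorphism, and by construction $h\circ \mathrm{id}_{X}=e_{F}$, so $\gamma X$ is equivalent to $e_{F}X$ as compactifications. This is exactly the statement that $\gamma X$ is generated by $F=C_{\gamma}(X)$.

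The only place where one must be cautious in $\mathbf{ZF}$ is the claim that $e:\gamma X\to \mathbb{R}^{C^{\ast}(\gamma X)}$ is a homeomorphic embedding with compact image; this however follows purely from $\gamma X$ being Tychonoff and compact, with no appeal to $\mathbf{UFT}$ or any weak choice principle, since we are not asserting compactness of an arbitrary Tychonoff cube but only of a continuous image of the already compact $\gamma X$. Thus the proof is essentially a bookkeeping argument, with the bijection $\Phi$ between $C^{\ast}(\gamma X)$ and $C_{\gamma}(X)$ being the single nontrivial ingredient.
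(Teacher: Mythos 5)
Your argument is correct and every step is indeed choice-free: the restriction bijection $\Phi$, the embedding lemma for the compact Tychonoff space $\gamma X$, the re-indexing homeomorphism $\pi$, and the identification $e_F X=\pi(e(\gamma X))$ all go through in $\mathbf{ZF}$. The paper states this proposition without proof (as one of the results that "can easily be obtained"), and what you have written is precisely the canonical argument the authors are alluding to, so there is nothing to compare beyond noting that your write-up supplies the omitted details faithfully.
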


\begin{defi}
Let $n$ be a positive integer. It is said that a compactification $\alpha X$
of a topological $X$ is an $n$-point compactification of $X$ if the
remainder $\alpha X\setminus X$ consists of exactly $n$ points.
\end{defi}

The Alexandroff compactification of a locally compact, non-compact Hausdorff
space $X$ is every Hausdorff compactification of $X$ with a one-point
remainder. However, the following notion of the Alexandroff compactification of a
topological space is also useful:

\begin{defi}
Let $\langle X, \tau_X\rangle$ be a non-compact topological space and let $%
\infty$ be an element which does not belong to $X$. Denote by $\mathcal{K}_X$
the collection of all simultaneously closed and compact sets of $\langle X,
\tau_X\rangle$. We put $\alpha_{a}X= X\cup\{ \infty\}$ and $%
\tau=\tau_X\cup\{ U\subseteq \alpha_{a}X: \alpha_a X\setminus U\in\mathcal{K}%
_X\}$. Then the topological space $\langle \alpha_a X, \tau\rangle$ is
called the \emph{Alexandroff compactification} of $\langle X, \tau_X\rangle$ and we denote it by $%
\alpha_a X$.
\end{defi}

We are going to give partial solutions to the following open problem:

\begin{p}
Is there a model of $\mathbf{ZF}$ in which there exists a Hausdorff, not
completely regular compactification $\gamma X$ of a Tychonoff space $X$ such
that $\gamma X\setminus X$ is completely regular?
\end{p}

For a space $X$, let $Coz(X)=\{ X\setminus A: A\in\mc{Z}(X)\}$. Members of $Coz(X)$ are called cozero-sets of $X$. Basic properties of zero-sets and cozero-sets can be found in \cite{GJ} and \cite{En}.

We are going to prove in $\mathbf{ZF}$ that all Hausdorff compactifications with finite remainders of Tychonoff spaces are not strange. To do this well, we need a proof in $\mathbf{ZF}$ of Theorem 3.1.7 of \cite{En}; however, since the axiom of choice is involved in the proof to Theorem 3.1.7 in \cite{En}, let us state the following lemma and give its subtle proof in $\mathbf{ZF}$: 

\begin{lem} $\mathbf{[ZF]}$ Let $K$ be a  compact subset of a completely regular space $X$ and let $A$ be a closed subset of $X$ such that $K\cap A=\emptyset$.  Then there exists a function $f\in C^{\ast}(X)$ such that $A\subseteq f^{-1}(0)$ and $K\subseteq f^{-1}(1)$.
\end{lem}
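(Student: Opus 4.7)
The plan is to sidestep the usual ZFC choice step --- picking a separating function $g_x$ for each $x\in K$ --- by indexing the cover by the entire set of admissible functions at once. Let $\mathcal{G}=\{g\in C^{\ast}(X,[0,1]):g|_A=0\}$, and for each $g\in\mathcal{G}$ put $V_g=g^{-1}((1/2,1])$, which is open in $X$. Complete regularity of $X$ applied to the closed set $A$ and any $x\in K$ (so $x\notin A$) yields some $g\in\mathcal{G}$ with $g(x)=1$, whence $x\in V_g$. Thus $\mathcal{U}=\{V_g:g\in\mathcal{G}\}$ is an open cover of $K$.

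Next, I would invoke compactness of $K$ to extract a finite subcover $\{U_1,\dots,U_n\}\subseteq\mathcal{U}$. Finite choice (provable in $\mathbf{ZF}$) lets me pick $g_1,\dots,g_n\in\mathcal{G}$ with $V_{g_i}=U_i$. Setting $g=\max(g_1,\dots,g_n)$, one obtains $g\in C^{\ast}(X,[0,1])$ with $g|_A=0$ and $g(x)>1/2$ for every $x\in K$. To finish, compose with the continuous retraction $\varphi\colon[0,1]\to[0,1]$ given by $\varphi(t)=\min(2t,1)$; then $f=\varphi\circ g\in C^{\ast}(X)$ satisfies $A\subseteq f^{-1}(0)$ and $K\subseteq f^{-1}(1)$.

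The step I expect to be the main obstacle, and the one that dictates the whole shape of the proof, is the very first: avoiding an unrestricted choice over the set $K$. The naive Engelking-style argument "for each $x\in K$ choose $g_x\in C^{\ast}(X,[0,1])$ with $g_x(x)=1$ and $g_x|_A=0$" silently invokes choice on a possibly uncountable family of nonempty sets, which is what fails in $\mathbf{ZF}$. Packaging all admissible functions into the single set $\mathcal{G}$ before passing to a cover eliminates this, and the only remaining choice is finite (after compactness has already reduced the cover to finitely many members), which is a theorem of $\mathbf{ZF}$. Continuity of $g=\max(g_1,\ldots,g_n)$ and of the final composition $\varphi\circ g$ are standard and require nothing beyond $\mathbf{ZF}$.
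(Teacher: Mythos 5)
Your proof is correct, and every step is available in $\mathbf{ZF}$: the cover is indexed by the set $\mathcal{G}$ of admissible functions rather than by points of $K$, so the only choice made is among the finitely many fibres $\{g\in\mathcal{G}:V_g=U_i\}$ after compactness has been applied, and finite choice is a theorem of $\mathbf{ZF}$. This choice-avoidance device is exactly the one the paper uses; the difference lies in how the two arguments finish. The paper covers $K$ by cozero-sets whose closures miss $A$, merges the finite subcover into a single cozero-set $U_0=g^{-1}((0,1])$ with $A\subseteq g^{-1}(0)$, applies compactness of $K$ a \emph{second} time to find $n_0$ with $g\geq 1/n_0$ on $K$, and then invokes the lemma that disjoint zero-sets ($g^{-1}(0)$ and $g^{-1}([1/n_0,1])$) are functionally separated. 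Your version short-circuits all of this: by building the threshold $1/2$ into the cover itself and taking $g=\max(g_1,\dots,g_n)$ of functions that already vanish on $A$, you get $g>1/2$ on $K$ for free, and the reparametrization $\varphi(t)=\min(2t,1)$ replaces the functional-separation lemma. Your finish is therefore slightly more elementary and self-contained; the paper's detour through cozero-sets and zero-set separation buys nothing extra here beyond conformity with the toolkit (cozero-sets, normal bases) it develops for later use.
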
 

\begin{proof} Let $\mc{V}=\{ V\in Coz(X): \text{cl}_X V\subseteq X\setminus A\}$. Since $X$ is completely regular, we have $K\subseteq\bigcup\mc{V}$. By the compactness of $K$, there exists a finite collection $\mc{U}\subseteq\mc{V}$ such that $K\subseteq\bigcup\mc{U}$. A  finite union of cozero-sets is a cozero-set; thus,  the set $U_0=\bigcup\mc{U}$ is a cozero-set of $X$. There exists a continuous function $g: X\to [0, 1]$ such that $U_0=g^{-1}((0, 1])$. Then $g^{-1}(0)\cap K=\emptyset$. It follows from the continuity of $g$ and from the compactness of $K$ that there exists a positive integer $n_0$  such that $g^{-1}([0, \frac{1}{n_0}])\cap K=\emptyset$. The sets $C=g^{-1}(\{0\})$ and $D=g^{-1}([\frac{1}{n_0}, 1])$ are disjoint zero-sets in $X$, $K\subseteq D$ and $A\subseteq C$. Since disjoint zero-sets are functionally separated (cf.  1.10 in \cite{GJ} or Theorem 1.5.14 in \cite{En}), there exists a continuous function $f:X\to [0, 1]$ such that $C\subseteq f^{-1}(0)$ and $D\subseteq f^{-1}(1)$. Then $A\subseteq f^{-1}(0)$ and $K\subseteq f^{-1}(1)$.
\end{proof}

\begin{cor} $\mathbf{[ZF]}$ Every compact completely regular space $X$ satisfies $\mathbf{UL}(X)$. 
\end{cor}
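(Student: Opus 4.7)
The plan is to deduce this immediately from Lemma 2.9. Given a compact completely regular space $X$ and a pair of disjoint closed subsets $A, B \subseteq X$, I would first observe that $B$, being a closed subset of the compact space $X$, is itself compact. This fact is provable in $\mathbf{ZF}$ without any form of choice: if $\mathcal{U}$ is any open cover of $B$ in $X$, then $\mathcal{U} \cup \{X \setminus B\}$ is an open cover of $X$, and extracting a finite subcover and discarding $X \setminus B$ yields a finite subcover of $B$ from $\mathcal{U}$.

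With $B$ recognized as a compact subset of $X$ disjoint from the closed set $A$, Lemma 2.9 applies directly with $K := B$. It produces a function $f \in C^{\ast}(X)$ with $A \subseteq f^{-1}(0)$ and $B = K \subseteq f^{-1}(1)$, which is precisely what is needed to verify $\mathbf{UL}(X)$. There is no substantive obstacle here: the corollary is a one-step specialization of the lemma obtained by noting that, in a compact ambient space, every closed set is automatically available as the compact set $K$ appearing in the hypothesis of Lemma 2.9.
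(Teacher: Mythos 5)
Your proof is correct and is exactly the intended argument: the paper states this corollary without proof as an immediate consequence of the preceding lemma (Lemma 2.10 in the paper's numbering, which you cite as 2.9), and your observation that a closed subset of a compact space is compact in $\mathbf{ZF}$ is the only step needed. Nothing further is required.
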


\begin{prop}
$\mathbf{[ZF]}$ If $X$ is a Tychonoff, locally compact non-compact space,
then the one-point Hausdorff compactification of $X$ is not strange.
\end{prop}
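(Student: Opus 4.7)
The plan is to invoke Proposition 2.5 and reduce the problem to showing that the Alexandroff compactification $\alpha_a X = X \cup \{\infty\}$ is completely regular. Since $X$ is Tychonoff, locally compact, and non-compact, a standard verification in $\mathbf{ZF}$ shows that $\alpha_a X$ is a compact Hausdorff space. So it remains to separate an arbitrary point $p \in \alpha_a X$ functionally from any closed set $F \subseteq \alpha_a X$ with $p \notin F$, and I will do this by cases on whether $p = \infty$ or $p \in X$.

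The unifying tool is the following observation, which I would record and use in both cases: if $K \subseteq X$ is compact and $g \in C^{\ast}(X)$ vanishes on $X\setminus K$ (or more generally on $X\setminus \text{int}(K)$), then the extension $\tilde g : \alpha_a X \to \mathbb{R}$ with $\tilde g(\infty)=0$ is continuous, because for each $\varepsilon>0$ the set $\alpha_a X\setminus K$ is an open neighbourhood of $\infty$ in $\alpha_a X$ on which $|\tilde g|<\varepsilon$. Hence to produce functions separating points on $\alpha_a X$ it suffices to produce functions on $X$, continuous and bounded, that are supported inside a compact subset of $X$, and to apply Lemma 2.10.

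For the case $p=\infty$, the closed set $F\subseteq\alpha_a X$ does not contain $\infty$, so by the definition of the topology on $\alpha_a X$ the set $F$ is a closed compact subset of $X$. Since $X$ is regular (being Tychonoff) and locally compact, the family of open subsets of $X$ with compact closure is a base for $X$; this fact requires no choice. These base sets cover $F$, and by compactness of $F$ there exist finitely many such sets $V_1,\dots,V_n$ covering $F$, so $K=\overline{V_1}\cup\dots\cup\overline{V_n}$ is compact with $F\subseteq\text{int}(K)$. Applying Lemma 2.10 to the disjoint pair consisting of the compact set $F$ and the closed set $X\setminus\text{int}(K)$ yields $g\in C^{\ast}(X)$ with $g[X\setminus\text{int}(K)]\subseteq\{0\}$ and $g[F]\subseteq\{1\}$; then $f=1-\tilde g$ is a continuous function on $\alpha_a X$ with $f(\infty)=1$ and $f[F]=\{0\}$. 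For the case $p\in X$, I set $F'=F\cap X$, observe that $F'$ is closed in $X$, and use local compactness together with regularity to pick a compact neighbourhood $K$ of $p$ contained in $X\setminus F'$ (again using the base of open sets with compact closure to stay in $\mathbf{ZF}$). Lemma 2.10 applied to $\{p\}$ and the closed set $X\setminus\text{int}(K)$ produces $g\in C^{\ast}(X)$ with $g(p)=1$ and $g$ vanishing outside $\text{int}(K)$; extending by $\tilde g(\infty)=0$ gives $f=\tilde g\in C(\alpha_a X)$ with $f(p)=1$ and $f[F]=\{0\}$ (noting that $F\cap X\subseteq X\setminus K$ and that $\infty\in F$ implies $f(\infty)=0$).

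The main obstacle is purely a $\mathbf{ZF}$-hygiene issue: many textbook arguments form a compact neighbourhood of a compact set by choosing a compact neighbourhood for each of its points, which is a use of choice. My plan sidesteps this by working entirely with the canonical base of open sets having compact closure and letting compactness of $F$ (respectively, of $X$ in the regularity argument around $p$) hand back finite subcovers, with no indexed selection performed. Once complete regularity of $\alpha_a X$ is established, Proposition 2.5 finishes the proof that the Alexandroff compactification is not strange.
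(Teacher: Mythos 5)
Your proof is correct, but it is organized differently from the paper's, so a comparison is worthwhile. The paper splits into cases according to whether the closed set $A$ contains the point $\infty$, and in each case it first invokes normality of the compact Hausdorff space $\alpha_a X$ to produce an open $V\supseteq A$ whose closure misses the given point; Lemma 2.10 (or plain complete regularity of $X$) is then applied to $X\cap\text{cl}_{\alpha_a X}V$, and the resulting function is extended over $\alpha_a X$ by assigning the value $1$ at $\infty$, with a hands-on verification that preimages of closed sets remain closed. You instead split on whether the \emph{point} to be separated is $\infty$, and your extensions always take the value $0$ at $\infty$: the whole argument is funneled through a single ``compact support'' lemma (a bounded continuous function vanishing off a compact $K\subseteq X$ extends continuously by $0$ at $\infty$), with the required compact $K$ manufactured choice-freely from the base of relatively compact open sets rather than from normality of $\alpha_a X$. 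Both routes rest on Lemma 2.10 and on avoiding pointwise selections; yours buys a cleaner, uniform continuity check at $\infty$ (continuity is immediate since $\tilde g$ is identically $0$ on the basic neighbourhood $\alpha_a X\setminus K$), at the cost of having to justify in $\mathbf{ZF}$ that relatively compact open sets form a base of a locally compact regular space --- which you do correctly, since no indexed choice of neighbourhoods is performed. One small remark: in your second case, applying Lemma 2.10 to the compact singleton $\{p\}$ is legitimate but unnecessary, as the complete regularity of $X$ already separates $p$ from the closed set $X\setminus\text{int}(K)$ directly.
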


\begin{proof} 
Let us fix a closed subset $A$ of $\alpha _{a}X$ and suppose that $x\in \alpha _{a}X\setminus
A$. We consider the following cases:\smallskip 

(i) $\infty \in A$. In this case $x\in X$. Since every Hausdorff compact space is normal, there exists a pair $U, V$ of disjoint open sets in $\alpha_a X$ such that $x\in U$ and $A\subseteq V$. Then $x\notin\text{cl}_{\alpha_a X}V$. Since $X$ is Tychonoff, there exists a function $f\in C^{\ast}(X)$ such that $f(x)=0$ and $X\cap\text{cl}_{\alpha_a X} V\subseteq f^{-1}(1)$. We define a function $F: \alpha_a X\to\mathbb{R}$  by putting $F(t)=f(t)$ for each $t\in X$ and $F(\infty)=1$. To check that $F$ is continuous, suppose that $D$ is a closed subset of $\mathbb{R}$. Then $F^{-1}(D)=f^{-1}(D)\subseteq \alpha_a X\setminus V$ when $1\notin D$, while $F^{-1}(D)=[f^{-1}(D)\cap(\alpha_a X\setminus V)]\cup\text{cl}_{\alpha_a X}V$ when $1\in D$. This, together with the continuity of $f$,  implies that  $F^{-1}(D)$ is closed in $\alpha_a X$, so $F\in C^{\ast}(\alpha_a X)$. Of course, $A\subseteq F^{-1}(1)$ and $F(x)=0$.

(ii) $\infty \notin A$. In this case $A$ is a compact subset of $X$.  Working similarly to case (i), we can find a pair $U, V$ of disjoint open subsets of $\alpha_a X$ such that $A\subseteq U$ and $\{\infty, x\}\subseteq V$. Since $X$ is completely regular, it follows from Lemma 2.10 that there exists a function $f\in C^{\ast}(X)$ such that $A\subseteq f^{-1}(0)$ and $X\cap\text{cl}_{\alpha_a X}V\subseteq f^{-1}(1)$. We define $F\in C^{\ast}(\alpha_a X)$ by putting $F(t)=f(t)$ for each $t\in X$ and $F(\infty)=1$. Then $A\subseteq F^{-1}(0)$ and $F(x)=1$.
\end{proof}

Proposition 2.12 can be generalized to the following:

\begin{prop}
$\mathbf{[ZF]}$ Every Hausdorff
compactification $\alpha X$  a non-compact locally compact Tychonoff $X$ with a finite remainder $\alpha X\setminus X$ is completely regular.
\end{prop}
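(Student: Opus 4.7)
The plan is to reduce Proposition 2.13 to Proposition 2.12 by a combination of a global quotient onto the Alexandroff compactification $\alpha_a X$ and, in one residual case, a local argument around a single remainder point. First, I would verify in $\mathbf{ZF}$ that the dense, locally compact Hausdorff subspace $X$ is open in the Hausdorff space $\alpha X$ (given $x\in X$ with compact neighborhood $K\subseteq X$ and $U=V\cap X$ open in $X$ with $x\in U\subseteq K$, the open set $V\cap(\alpha X\setminus K)$ is disjoint from $X$, hence empty by density, so $V\subseteq K\subseteq X$). Hence $F:=\alpha X\setminus X$ is a finite closed set. Next, I would introduce the surjection $q:\alpha X\to\alpha_a X$ that is the identity on $X$ and collapses every point of $F$ to $\infty$. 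Continuity of $q$ follows from a direct check on the two kinds of closed sets in $\alpha_a X$: compact closed subsets of $X$ pull back to themselves (compact in Hausdorff $\alpha X$, hence closed), and sets of the form $\{\infty\}\cup C'$ with $C'$ closed in $X$ pull back to $F\cup C'$, which is closed in $\alpha X$.

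Given closed $A\subseteq\alpha X$ and $x\in\alpha X\setminus A$, the goal is an $f\in C^{\ast}(\alpha X)$ with $f(x)=0$ and $f|_A=1$. Whenever $q(x)\notin q(A)$, I would simply pull back a separating function from $\alpha_a X$: if $A\cap F=\emptyset$ then $A$ is a compact closed subset of $X$, hence closed in $\alpha_a X$, and if $A\cap F\neq\emptyset$ then $q(A)=(A\cap X)\cup\{\infty\}$ is closed in $\alpha_a X$ because $A\cap X$ is closed in $X$. Proposition 2.12 applied to $\alpha_a X$ then yields $g\in C^{\ast}(\alpha_a X)$ with $g(q(x))=0$ and $g\equiv 1$ on $q(A)$, and $f:=g\circ q$ is the desired function. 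This dispatches every case except the one in which $x=p_i\in F$ \emph{and} $A\cap F\neq\emptyset$, where $q(x)=\infty=q(y)$ for any $y\in A\cap F$.

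For this remaining case --- the main obstacle --- I would invoke normality of the compact Hausdorff space $\alpha X$ (the same tool used in the proof of Proposition 2.12) to choose disjoint open sets $U\ni p_i$ and $V\supseteq A\cup(F\setminus\{p_i\})$, which forces $\overline{U}\cap(A\cup(F\setminus\{p_i\}))=\emptyset$, and in particular $\overline{U}\cap F=\{p_i\}$. Then $Y:=\overline{U}\setminus\{p_i\}\subseteq X$ is Tychonoff; as an open subspace of the compact Hausdorff space $\overline{U}$ it is locally compact, and since $p_i$ is not isolated in $\alpha X$ (because $X$ is dense and $p_i\notin X$), $Y$ is dense in $\overline{U}$ and non-compact. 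Therefore $\overline{U}$ is a one-point Hausdorff compactification of $Y$, so Proposition 2.12 applies and gives that $\overline{U}$ is completely regular. I would pick $h\in C^{\ast}(\overline{U})$ with $h(p_i)=0$ and $h\equiv 1$ on the closed set $\overline{U}\setminus U$, and extend to $f:\alpha X\to[0,1]$ by declaring $f\equiv 1$ on $\alpha X\setminus U$. The pasting lemma applied to the closed cover $\{\overline{U},\ \alpha X\setminus U\}$ --- on whose overlap $\overline{U}\setminus U$ both pieces equal $1$ --- delivers a continuous $f$ with $f(p_i)=0$, and with $f|_A=1$ since $A\subseteq\alpha X\setminus\overline{U}$.

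The technically delicate point is the identification of $\overline{U}$ as a one-point Hausdorff compactification of the Tychonoff, locally compact, non-compact space $Y$. In particular, density of $Y$ in $\overline{U}$ depends on non-isolation of $p_i$ in $\alpha X$ (coming from density of $X$, not merely of $\overline{U}\cap X$, in $\alpha X$), and only then does Proposition 2.12 apply locally to yield $h$, which can subsequently be pasted with the constant $1$ outside $U$.
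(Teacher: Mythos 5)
Your proof is correct and follows essentially the same route as the paper's: both arguments reduce to Proposition 2.12 by recognizing a closed neighbourhood of a remainder point as a one-point Hausdorff compactification of a locally compact, non-compact Tychonoff subspace of $X$, and then paste finitely many continuous functions over a finite closed cover. The only differences are cosmetic --- you make the quotient $q:\alpha X\to\alpha_a X$ explicit where the paper invokes $\alpha_a X\leq\alpha X$ and extends functions, and in the residual case you isolate only the single remainder point $x$ and paste with the constant $1$, whereas the paper separates all $n$ remainder points simultaneously.
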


\begin{proof}  Let $\alpha X$ be a Hausdorff compactification of $X$ such that $\alpha X\setminus X$ is finite. For $n\in\omega$, suppose that  $\alpha X\setminus X$ is of cardinality $n$.  Let $\alpha X\setminus X=\{y_i: i\in n\}$. Let $A$ be a closed subset of $\alpha X$ and let $y\in\alpha X\setminus A$. If $y\in X$, we know from Proposition 2.12 that there exists a function $h\in C_{\alpha_a}(X)$ such that $h(y)=0$ and $A\cap X\subseteq h^{-1}(1)$. Since $\alpha_a X\leq \alpha X$, the function $h$ is continously extendable over $\alpha X$. If $\tilde{h}$ is the continuous extension of $h$ over $\alpha X$, then $\tilde{h}(y)=0$ and $A\subseteq \tilde{h}^{-1}(1)$.

Now, consider the case when $y\in\alpha X\setminus X$.  There is a collection $\{V_i: i\in n\}$ of pairwise disjoint open sets in $\alpha X$ such that $y_i\in V_i$ for each $i\in n$. Let $K=\alpha X\setminus \bigcup_{i\in n}V_i$ and let $A_i=K\cup (A\cap V_i)$ for each $i\in n$. Consider any $i\in n$. Notice that $A_i=K\cup [A\cap(\alpha X\setminus\bigcup\{V_j: j\in n\setminus\{i\}\})]$, so $A_i$ is closed in $K\cup V_i$. Of course, $K\cup V_i=\alpha X\setminus\bigcup\{V_j: j\in n\setminus\{i\}\}$ is a one-point compactification of $K\cup (X\cap V_i)$. Therefore,  if  $y_i\notin A$, it follows from Proposition 2.12  that there exists a continuous function $f_i: K\cup V_i\to [0, 1]$ such that $f_i(y_i)=0$ and $A_i\subseteq f_i^{-1}(1)$. If $i\in n$ is such that $y_i\in A$, we put $f_i(z)=1$ for each $z\in K\cup V_i$. We define a function $f:\alpha X\to [0, 1]$ as follows: if $i\in n$ and $z\in K\cup V_i$, then $f(z)=f_i(z)$. Clearly, $A\subseteq f^{-1}(1)$ and $f(y_i)=0$ for each $i\in n$ such that $y_i\notin A$. Let us prove that $f$ is continuous. To this aim, consider any closed in $\mathbb{R}$ set $D$.  If $i\in n$, the set $f_i^{-1}(D)$  is closed in $K\cup V_i$. Since $K\cup V_i$ is closed in $\alpha X$ for each $i\in n$, we have that $f^{-1}(D)$ is closed in $\alpha X$ because $f^{-1}(D)=\bigcup\{ f_i^{-1}(D): i\in n\}$. Finally, to show that $f(y)=0$ and $A\subseteq f^{-1}(1)$, it suffices to notice that since $y\in\alpha X\setminus X$, there exists $i\in n$ such that $y=y_i$ and, of course, $y_i\notin A$. 
\end{proof}

\begin{prop} $\mathbf{[ZF]}$ Suppose that $\alpha X$ is a Hausdorff compactification of a Tychonoff space $X$ such that $\alpha X\setminus X$ is homeomorphic with the Alexandroff compactification of the discrete space $\omega$. Then $\alpha X$ is completely regular.
\end{prop}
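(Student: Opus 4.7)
The plan is to establish complete regularity of $\alpha X$ by reducing, in each of three cases depending on the location of the point to be separated, to a quotient compactification of $X$ to which Proposition 2.12 or 2.13 applies. I will write the homeomorphism with $\alpha_a(\omega)$ as $R=\alpha X\setminus X=\{y_n:n\in\omega\}\cup\{y_\infty\}$, with each $y_n$ isolated in $R$ and $y_\infty$ the unique non-isolated point. Since $R$ is compact in the Hausdorff space $\alpha X$, $R$ is closed in $\alpha X$, so $X$ is open in $\alpha X$ and hence locally compact and non-compact. I will also use that compact Hausdorff spaces are regular in $\mathbf{ZF}$, and that a subset of $R$ is closed in $R$ (equivalently, in $\alpha X$) precisely when it is finite or contains $y_\infty$.

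Given a closed $A\subseteq\alpha X$ and $y\in\alpha X\setminus A$, I will choose a closed $C\subseteq R$, form the quotient $q:\alpha X\to Z$ collapsing $C$ to one point $\ast$, and observe that $Z$ is a Hausdorff compactification of $X$ via $q|_X$ (a standard quotient argument using only regularity of $\alpha X$, which is available in $\mathbf{ZF}$). Three choices of $C$ will suffice: (i) if $y\in X$, take $C=R$, so $Z=\alpha_a X$, and invoke Proposition 2.12 for complete regularity of $Z$; (ii) if $y=y_n$ for some $n\in\omega$, take $C=R\setminus\{y_n\}$, which is closed because it contains $y_\infty$, so $Z$ has two-point remainder $\{y_n,\ast\}$ and Proposition 2.13 applies; (iii) if $y=y_\infty$, first note that $A\cap R$ is a closed subset of $R$ not containing $y_\infty$ and hence is finite, then take $C=R\setminus A$, which is closed because it contains $y_\infty$, giving $Z$ the finite remainder $(A\cap R)\cup\{\ast\}$ so that Proposition 2.13 applies again.

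In each case I will check that $q(A)$ is closed in $Z$ (immediate because $q$ is a closed map between compact Hausdorff spaces) and that $q(y)\notin q(A)$: in (i), $q(y)=y\in X$ while $q(A)\cap X=A\cap X\not\ni y$; in (ii), $q(y)=y_n$ while $q(A)\subseteq(A\cap X)\cup\{\ast\}\not\ni y_n$; in (iii), $q(y)=\ast$ while $q(A)=A\not\ni\ast$ because $A\cap C=A\cap(R\setminus A)=\emptyset$. A continuous $g:Z\to[0,1]$ separating $q(y)$ from $q(A)$ in $Z$ then pulls back to $f=g\circ q$ separating $y$ from $A$ in $\alpha X$.

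The main obstacle I anticipate is case (iii): the naive choice $C=R$ fails, since collapsing all of $R$ merges $y_\infty$ with every point of $A\cap R$, forcing $q(y_\infty)\in q(A)$ whenever $A$ meets $R$. The essential observation that rescues this case is that closed subsets of $\alpha_a(\omega)$ avoiding the point at infinity are automatically finite, which legitimizes the refined choice $C=R\setminus A$: this $C$ still contains $y_\infty$ (so the remainder of $Z$ stays finite, and Proposition 2.13 applies) while being disjoint from $A$ (so $q(A)=A$ and $\ast\notin q(A)$).
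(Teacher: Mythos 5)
Your proof is correct, but it takes a genuinely different route from the paper's. Both arguments reduce the problem to Propositions 2.12 and 2.13, and both exploit the same structural fact about $\omega+1$ (a closed subset avoiding the limit point is finite), but the reductions go in opposite directions. The paper \emph{restricts}: for $y=\omega$ it chooses open sets $V\ni y$ and $W\supseteq A$ with disjoint closures so that $\mathrm{cl}_{\alpha X}W$ is itself a Hausdorff compactification with finite remainder of $X\cap\mathrm{cl}_{\alpha X}W$, separates $A$ from $\mathrm{bd}_{\alpha X}W$ inside that subspace via Proposition 2.13 and Lemma 2.10, and then extends the separating function by a constant over the rest of $\alpha X$ (and similarly for $y\in\omega$ with a one-point subcompactification). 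You instead \emph{quotient}: you collapse a suitably chosen closed subset $C$ of the remainder to a single point so that all of $X$ sits inside a Hausdorff compactification with finite remainder, separate there, and pull back along $q$. Your construction is more uniform across the three cases and avoids the paper's gluing/extension step, at the price of having to verify in $\mathbf{ZF}$ that collapsing a closed subset of a compact Hausdorff space yields a Hausdorff compactification of $X$; that verification is routine (it needs only regularity/normality of compact Hausdorff spaces, which is choice-free and is already used in the proof of Proposition 2.12), and you correctly isolate the one delicate point, namely that for $y=y_\infty$ one must collapse $R\setminus A$ rather than all of $R$, which is legitimate precisely because $A\cap R$ is a closed subset of $R$ missing $y_\infty$ and hence finite.
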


\begin{proof} We may assume that $\alpha X\setminus X=\omega+1$ where $\omega+1$ is equipped with the usual order topology on ordinal numbers. Notice that $X$ is locally compact because $X$ is open in $\alpha X$. Therefore, $\alpha_a X$ is a Hausdorff compactification of $X$.  Let $A$ be a closed subset of $\alpha X$ and let $y\in\alpha X\setminus A$. If $y\in X$, we know from Proposition 2.12 that there exists $f\in C(\alpha_a X)$ such that $f(y)=0$ and $(A\cap X)\cup(\alpha_a X\setminus X)\subseteq f^{-1}(1)$. If $\tilde f$ is the continuous extension of $f\vert_X$ over $\alpha X$, then $\tilde{f}(y)=0$ and $A\subseteq\tilde{f}^{-1}(1)$. Now, assume that $y\in\alpha X\setminus X$ and consider the following cases (i) and (ii):\smallskip

(i)  $y=\omega$. In this case,  there exists an open in $\alpha X$ set $V$ such that $A\cap\text{cl}_{\alpha X}V=\emptyset$, $y\in V$ and the set $(\alpha X\setminus X)\setminus V$ is finite. Let $W$ be an open set in $\alpha X$ such that $A\subseteq W$ and $(\text{cl}_{\alpha X} W)\cap(\text{cl}_{\alpha X} V)=\emptyset$. Put $Y=X\cap\text{cl}_{\alpha X} W$ and $\gamma Y=\text{cl}_{\alpha X}Y=\text{cl}_{\alpha X} W$. Then $\gamma Y$ is a Hausdorff compactification of the Tychonoff space $Y$ such that $\gamma Y\setminus Y$ is finite.  In  view of Proposition 2.13, the space $\gamma Y$ is completely regular. The sets $\text{bd}_{\alpha X} W$ and $A$ are disjoint and both compact in  the compact Tychonoff space $\gamma Y$. Thus, it follows from Lemma 2.10 that there exists a continuous function $g:\gamma Y\to [0, 1]$ such that $\text{bd}_{\alpha X} W\subseteq g^{-1}(0)$ and $A\subseteq g^{-1}(1)$. We define a function $\tilde{g}:\alpha X\to [0, 1]$ putting $\tilde{g}(z)=0$  for each $z\in\alpha X\setminus\text{cl}_{\alpha X} W$, while $\tilde{g}(z)=g(z)$ for each $z\in\gamma Y$. The function $\tilde{g}$ is continuous on $\alpha X$; moreover,  $\tilde{g}(y)=0$ and $A\subseteq \tilde{g}^{-1}(1)$. 

(ii) $y\in \omega$. Then there exists an open neigbourhood $W(y)$ of $y$ in $\alpha X$ such that $A\cap\text{cl}_{\alpha X} W(y)=\emptyset$ and $[\text{cl}_{\alpha X} W(y)]\cap(\alpha X\setminus X)=\{y\}$. Since $\text{cl}_{\alpha X} W(y)$ is a one-point Hausdorff compactification of $X\cap\text{cl}_{\alpha X}W(y)$, it follows from Proposition 2.12 that there exists a continuos function $h:\text{cl}_{\alpha X}W(y)\to [0, 1]$ such that $h(y)=0$ and $\text{bd}_{\alpha X} W(y)\subseteq h^{-1}(1)$. We define a function $\tilde{h}:\alpha X\to [0, 1]$ as follows: if $z\in \text{cl}_{\alpha X}W(y)$, then $\tilde{h}(z)=h(z)$; if $z\in\alpha X\setminus W(y)$, then $\tilde{h}(z)=1$. The function $\tilde{h}$ is continuous, $\tilde{h}(y)=0$ and $A\subseteq \tilde{h}^{-1}(1)$.  
\end{proof}

We recall that a subspace $P$ of a space $X$ is called $C^{\ast}$-embedded in $X$ if each function from $C^{\ast}(P)$ is continuously extendable over $X$ (cf. 1.13 in \cite{GJ} or Definition 1.31 in \cite{Ch}).

\begin{thm} $\mathbf{[ZF]}$ Let $\alpha X$ be a Hausdorff compactification of a locally compact Tychonoff space $X$ such that $\alpha X\setminus X$ is completely regular and $C^{\ast}$-embedded in $\alpha X$. Then $\alpha X$ is completely regular.
\end{thm}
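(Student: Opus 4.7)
Proof plan: Fix a closed set $A\subseteq\alpha X$ and a point $y\in\alpha X\setminus A$; I need to produce a continuous $f\colon\alpha X\to[0,1]$ with $f(y)=0$ and $A\subseteq f^{-1}(1)$. The central tool is the canonical continuous map $q\colon\alpha X\to\alpha_a X$ satisfying $q|_X=\mathrm{id}_X$ and $q(R)=\{\infty\}$, where $R=\alpha X\setminus X$. Its continuity is immediate from the fact that every compact subset of $X$ is closed in the Hausdorff space $\alpha X$; by Proposition 2.12, the target $\alpha_a X$ is completely regular.

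If $y\in X$, then $q(A)$ is compact in $\alpha_a X$, hence closed, and $y\notin q(A)$ (since $q^{-1}(y)=\{y\}$ and $y\notin A$). Complete regularity of $\alpha_a X$ yields a continuous $g\colon\alpha_a X\to[0,1]$ with $g(y)=0$ and $g(q(A))=\{1\}$, and $f:=g\circ q$ finishes this case.

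For $y\in R$, I reduce by compactness. Let $\mathcal{B}$ be the collection of open sets $U\subseteq\alpha X$ for which there exists a continuous $h\colon\alpha X\to[0,1]$ with $h(y)=0$ and $h|_U\equiv 1$; I claim $\mathcal{B}$ covers $A$. Granting the claim, $A$ is closed in compact $\alpha X$, so $A$ is compact; pick finitely many $U_1,\dots,U_n\in\mathcal{B}$ covering $A$, choose one witness $h_i$ per $U_i$ (finite choice, valid in $\mathbf{ZF}$), and set $f:=\max_{i\le n}h_i$. To verify the claim for a given $a\in A$, split according to where $a$ sits. If $a\in X$, use local compactness to pick a compact neighbourhood $K$ of $a$ in $X$; then $K$ is closed in $\alpha_a X$ with $\infty\notin K$, so complete regularity of $\alpha_a X$ produces $g\colon\alpha_a X\to[0,1]$ with $g(\infty)=0$ and $g(K)=\{1\}$, and $h:=g\circ q$ together with $U:=\mathrm{int}_X K$ work. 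If instead $a\in R$, complete regularity of $R$ yields a continuous $g_0\colon R\to[0,1]$ with $g_0(y)=0$ and $g_0(a)=1$; extend $g_0$ via the $C^{\ast}$-embedding hypothesis to some continuous $\tilde g_0\colon\alpha X\to\mathbb{R}$, then post-compose with the clip $\psi(t)=\max(0,\min(1,t))$ to obtain a continuous $\tilde g\colon\alpha X\to[0,1]$ that still agrees with $g_0$ on $R$. Finally set $h:=\phi\circ\tilde g$ with $\phi(t)=\min(1,2t)$ and $U:=\{x\in\alpha X:\tilde g(x)>1/2\}$: the set $U$ is open and contains $a$, on $U$ one has $h\equiv 1$, and $h(y)=\phi(\tilde g(y))=\phi(0)=0$.

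The main obstacle lies in this last subcase, where $a\in R$. Complete regularity of $R$ gives only the pointwise value $g_0(a)=1$, while the compactness argument demands that the transferred function be identically $1$ on an open neighbourhood of $a$. The $C^{\ast}$-embedding hypothesis on $R$ is precisely what allows $g_0$ to be moved to $\alpha X$ without perturbing its vanishing at $y\in R$, and the sharpening composition with $\phi(t)=\min(1,2t)$ is the standard device for converting the open preimage $\{\tilde g>1/2\}$ into a set on which the resulting function equals $1$. All other ingredients (continuity of $q$, compactness of $A$, finite choice, and the complete regularity of $\alpha_a X$ supplied by Proposition 2.12) are available in $\mathbf{ZF}$.
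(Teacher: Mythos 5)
Your proof is correct, and in the main case ($y\in\alpha X\setminus X$) it follows a genuinely different route from the paper's. The paper separates $y$ from the whole closed set $A\cap(\alpha X\setminus X)$ at once by a single function $g$ on the remainder, extends it to $\tilde g$ by $C^{\ast}$-embedding, observes that the residual set $B=A\cap\tilde g^{-1}([\tfrac12,1])$ is a compact subset of $X$, kills $B$ by a second function pulled back from $\alpha_a X$, and combines the two via $\min$, finishing with the fact that disjoint zero-sets are functionally separated. You instead run a local-to-global compactness argument: cover $A$ by open sets on each of which some witness function (vanishing at $y$) is identically $1$, extract a finite subcover, and take $\max$ of finitely many witnesses. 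Your scheme buys two things: it needs only the \emph{pointwise} separation of $y$ from each individual $a\in A\cap(\alpha X\setminus X)$ (functional Hausdorffness of the remainder would suffice there), and it avoids the final zero-set separation step, since $\max$ of the witnesses already takes the value $1$ on all of $A$; the price is that a separate extension over $\alpha X$ is invoked for each point of $A$ in the remainder, though only finitely many are actually selected, so only finite choice is used and the argument stays within $\mathbf{ZF}$. Your treatment of the case $y\in X$ is essentially the paper's, phrased through the quotient map $q$ rather than through restriction and re-extension. One small point worth making explicit: the continuity of $q$ (and the openness of $\mathrm{int}_X K$ in $\alpha X$) uses not only that compact subsets of $X$ are closed in $\alpha X$ but also that $X$, being locally compact and dense in the Hausdorff space $\alpha X$, is open in $\alpha X$; this is a standard $\mathbf{ZF}$ fact and the paper likewise uses $\alpha_a X\leq\alpha X$ without proof, so it is an omission of justification rather than a gap.
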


\begin{proof} Let $A$ be a closed subset of $\alpha X$ and let $y\in\alpha X\setminus A$. If $y\in X$, we know from Proposition 2.12 that there exists a function $\psi\in C(\alpha_a X)$ such that $\psi(y)=1$ and $(A\cap X)\cup(\alpha_a X\setminus X)\subseteq \psi^{-1}(0)$. If $f$ is the restriction of $\psi$ to $X$, while $\tilde{f}$ is the continuous extension of $f$ over $\alpha X$, then $\tilde{f}(y)=1$ and $A\subseteq \tilde{f}^{-1}(0)$. 

Now, consider the case when $y\in\alpha X\setminus X$. Since $\alpha X\setminus X$ is completely regular, there exists a continuous function $g:\alpha X\setminus X\to [0, 1]$ such that $g(y)=1$ and $A\cap(\alpha X\setminus X)\subseteq g^{-1}(0)$. Since $\alpha X\setminus X$ is $C^{\ast}$-embedded in $\alpha X$, the function $g$ has a continuous extension $\tilde{g}:\alpha X\to [0, 1]$.  Let $B= A\cap\tilde{g}^{-1}([\frac{1}{2}, 1])$. Then $B$ is a compact subset of $X$. In the light of Proposition 2.12, the Alexandroff compactification $\alpha_a X$ is completely regular, so there exists  a continuous function $\phi: \alpha_a X\to [0, 1]$ such that $\phi(\alpha_a X\setminus X)=\{1\}$ and $B\subseteq \phi^{-1}(0)$. Since $\alpha_a X\leq\alpha X$, the function $\phi\vert_X$ has a continuous extension $\tilde{h}:\alpha X\to [0, 1]$. Let us consider the function $\kappa:\alpha X\to [0, 1]$  defined by $\kappa=\min\{\tilde{g}, \tilde{h}\}$. It is clear that $\kappa(y)=1$. If $z\in B$, then $\kappa(z)=0$ because $\tilde{h}(z)=0$. If $z\in A\setminus B$, then $\tilde{g}(z)\leq \frac{1}{2}$, so $\kappa(z)\leq\frac{1}{2}$. This implies that $A\subseteq \kappa^{-1}([0, \frac{1}{2}])$. Let $C=\kappa^{-1}(1)$ and $D=\kappa^{-1}([0, \frac{1}{2}])$. Then $C, D$  are disjoint zero-sets in $\alpha X$ such that $y\in C$ and $A\subseteq D$. This proves that $\alpha X$ is completely regular because disjoint zero-sets are functionally separated.
\end{proof}

\begin{rem}  In view of Remark 2.3 and Lemma 2.10, it holds true in every model of
\textbf{ZF }$+$ \textbf{DC} that if $X$ is a compact Tychonoff space, then  $\mathbf{TET}(X)$ is satisfied. It was shown in Section 3 of  \cite{HKRR} that  there is a model of $\mathbf{ZF}$ in which a compact Tychonoff space $X$ need not satisfy $\mathbf{TET}(X)$. 
\end{rem}

\begin{prop} $\mathbf{[ZF]}$ Suppose that $\alpha X$ is a Hausdorff compactification of a Tychonoff space $X$ such that $\alpha X\setminus X$ is finite. Then $\alpha X\setminus X$ is $C^{\ast}$-embedded in $\alpha X$. 
\end{prop}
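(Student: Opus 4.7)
The plan is to first extract complete regularity of $\alpha X$ from Proposition 2.13, and then to build the extension explicitly as a finite linear combination of Urysohn-type separating functions.

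First, since $R:=\alpha X\setminus X$ is finite and $\alpha X$ is Hausdorff, $R$ is closed in $\alpha X$, and so $X=\alpha X\setminus R$ is open in the compact Hausdorff space $\alpha X$; hence $X$ is locally compact. If $X$ happens to be compact, then $R=\emptyset$ and the statement is vacuous, so I may assume that $X$ is a non-compact locally compact Tychonoff space. Proposition 2.13 then applies and yields that $\alpha X$ is completely regular.

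Next, I would enumerate $R=\{y_1,\dots,y_n\}$ for some $n\in\omega$. For each $i\in\{1,\dots,n\}$, the set $R\setminus\{y_i\}$ is finite and hence closed in the $T_1$ space $\alpha X$, and $y_i$ does not belong to it. Complete regularity of $\alpha X$ then provides, for each such $i$, a function $f_i\in C^{\ast}(\alpha X)$ with values in $[0,1]$ such that $f_i(y_i)=1$ and $f_i(y_j)=0$ for every $j\neq i$. The family $\{f_1,\dots,f_n\}$ is obtained by a finite choice, which is a theorem of $\mathbf{ZF}$. Given any $g\in C^{\ast}(R)$, I would define $\tilde g:=\sum_{i=1}^{n} g(y_i)f_i$. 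Being a finite linear combination of bounded continuous functions, $\tilde g$ lies in $C^{\ast}(\alpha X)$, and evaluation gives $\tilde g(y_j)=g(y_j)$ for every $j\in\{1,\dots,n\}$. Thus $\tilde g$ is the desired continuous extension of $g$ over $\alpha X$, and $R$ is $C^{\ast}$-embedded in $\alpha X$.

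The only step I would expect to require any care is the invocation of Proposition 2.13: its hypothesis of local compactness of $X$ must be secured without any form of choice. This, however, reduces to the closedness of the finite set $R$ in the Hausdorff space $\alpha X$, which is immediate. All remaining selections --- the enumeration of $R$ and the picking of the functions $f_i$ --- are finite, and therefore safe in $\mathbf{ZF}$, so no appeal to any weak choice principle is needed at any point of the argument.
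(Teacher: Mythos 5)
Your proof is correct, and it takes a genuinely different (and somewhat cleaner) route to the extension than the paper does. Both arguments hinge on the same key fact, namely that Proposition 2.13 makes $\alpha X$ completely regular; you are in fact slightly more careful than the paper here, since you explicitly verify the local compactness of $X$ (which Proposition 2.13 requires but the statement of the present proposition does not hypothesize) from the openness of $X$ in $\alpha X$. After that the constructions diverge. The paper fixes pairwise disjoint open neighbourhoods $V_i$ of the points $y_i$, groups the points of the remainder according to the value $d\in D=f(\alpha X\setminus X)$ they are sent to, produces for each $d$ a function $g_d:\alpha X\to[d,1+\max D]$ equal to $d$ on $f^{-1}(d)$ and equal to $1+\max D$ off $\bigcup\{V_i:i\in N(d)\}$, and recovers the extension as the pointwise minimum $\min\{g_d:d\in D\}$. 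You instead use a Lagrange-interpolation (partition-of-unity) style argument: Urysohn-type functions $f_i$ with $f_i(y_i)=1$ and $f_i(y_j)=0$ for $j\neq i$, combined linearly as $\sum_i g(y_i)f_i$. Your version avoids the auxiliary neighbourhoods and the minimum construction altogether, and the verification that the extension agrees with $g$ on the remainder is a one-line evaluation; the paper's version has the minor advantage of keeping the extension's range inside $[\min D, 1+\max D]$, but that is not needed for $C^{\ast}$-embeddability. In both proofs all selections (the enumeration of the finite remainder and the finitely many separating functions) are finite, so both are legitimate $\mathbf{ZF}$ arguments.
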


\begin{proof} We may assume that $X$ is non-compact. Let $n\in\omega$ be equipotent with $\alpha X\setminus X$. We fix a function $f: \alpha X\setminus X\to\mathbb{R}$ and put $D=f(\alpha X\setminus X)$. Assume that $\alpha X\setminus X=\{ y_i: i\in n\}$. There is a collection $\{V_i: i\in n\}$ of pairwise disjoint open sets in $\alpha X$ such that $y_i\in V_i$ for each $i\in n$. For $d\in D$, let $N(d)=\{ i\in n: f(y_i)=d\}$ and $A(d)=\alpha X\setminus\bigcup\{V_i: i\in N(d)\}$. In the light of Proposition 2.13, the space $\alpha X$ is Tychonoff. Thus, for each $d\in D$, there exists a continuous function $g_d:\alpha X\to[d, 1+ \max D]$ such that $f^{-1}(d)\subseteq g_d^{-1}(d)$ and $A(d)\subseteq g_d^{-1}(1+\max D)$. Let $g(t)=\min\{g_d(t): d\in D\}$ for each $t\in\alpha X$. Then $g\in C(\alpha X)$ and $g(t)=f(t)$ for each $t\in\alpha X\setminus X$. 
\end{proof}

Of course, one can also deduce from Proposition 2.13 and the $\mathbf{ZFC}$-proof to Tietze-Urysohn extension theorem that Proposition 2.17 holds in $\mathbf{ZF}$; however, we prefer a simpler, direct $\mathbf{ZF}$-proof to it. Among other facts, we are going to show that it may happen in a model of $\mathbf{ZF}$ that, for a Hausdorff compactification $\alpha X$ of a locally compact Tychonoff space such that $\alpha X\setminus X$ is homeomorphic with $\omega+1$, the remainder $\alpha X\setminus X$ can fail to be $C^{\ast}$-embedded in $\alpha X$. It might be interesting to know the place of the following sentences $C^{\ast}\mathbf{R}$ and $C^{\ast}\mathbf{R}[\omega]$ in the hierarchy of choice principles:

$C^{\ast}\mathbf{R}$: For every locally compact Tychonoff space $X$ and for every Tychonoff compactification $\alpha X$ of $X$, the remainder $\alpha X\setminus X$ is $C^{\ast}$-embedded in $\alpha X$.

$C^{\ast}\mathbf{R}[\omega]$: For every locally compact  Tychonoff space $X$ and for every Hausdorff compactification $\alpha X$ of $X$ such that $\alpha X\setminus X$ is homeomorphic with $\omega+1$, the remainder $\alpha X\setminus X$ is $C^{\ast}$-embedded in $\alpha X$.

As usual, we denote by $\mathbf{CMC}$ the axiom of countable multiple choice which states that for each sequence $(X_n)_{n\in\omega}$ of non-empty sets there exists a sequence $(F_n)_{n\in\omega}$ of non-empty finite subsets $F_n$ of $X_n$ (see Form 126 in \cite{HR} and Definition 2.10 in \cite{Her}). The axiom of countable choice (Form 8 in \cite{HR}), denoted by $\mathbf{CC}$ in Definition 2.5 of \cite{Her} and by $\mathbf{CAC}$ in many articles (see, for instance, \cite{HerK} and \cite{HKRR}), states that every non-empty countable collection of non-empty sets has a choice function. Let us recall the following van Douwen's choice principle (Form 119 in \cite{HR}, as well as $\mathbf{CC}(\mathbb{Z})$ on page 79 in \cite{Her}) which was introduced in \cite{vD} and denoted by $\mathbf{vDPC}(\omega)$ in \cite{HKRR}:

$\mathbf{vDCP}(\omega)$: For every family $\{ \langle A_i, \leq_i\rangle: i\in\omega\}$ such that each $\langle A_i, \leq_i\rangle$ is a linearly ordered set isomorphic with the set $\mathbb{Z}$ of integers equipped with the standard order, the family $\{ A_i; i\in\omega\}$ has a choice function.

It was shown in \cite{HKRR} that $\mathbf{vDCP}(\omega)$ is strictly weaker than $\mathbf{CMC}$. For significant applications of models in which $\mathbf{vDCP}(\omega)$ fails, the following construction was used, for instance, in \cite{vD}, \cite{HKRR}, \cite{W2} and in Section 4.7 of \cite{Her}:

Let $\mc{A}=\{ \langle A_i, \leq_i\rangle: i\in\omega\}$ be a collection of linearly ordered sets $\langle A_i, \leq_i\rangle$ isomorphic with the set $\mathbb{Z}$ of integers equipped with the standard order. Let $A=\bigcup\{A_i: i\in\omega\}$. Fix sets $\bar{A}=\{a_i: i\in\omega\}$ and $\bar{B}=\{b_i: i\in\omega\}$ of pairwise distinct  elements such that $A\cap(\bar{A}\cup\bar{B})=\emptyset=\bar{A}\cap\bar{B}$. Put $X_i=A_i\cup\{ a_i, b_i\}$ and extend the order $\leq_i$ to a linear order $\leq_i$ on $X_i$  by requiring that $a_i$ is the smallest element of $\langle X_i, \leq_i\rangle$, while $b_i$ is the largest element of $\langle X_i, \leq_i\rangle$. For simplicity, without any loss of generality, we may assume that $X_i\cap X_j=\emptyset$ for each pair $i,j$  of distinct elements of $\omega$. We equip each $X_i$ with the order topology induced by $\leq_i$. We denote by $X[\mc{A}]$ the disjoint union (the sum) of the linearly ordered topological spaces $X_i$ where $i\in\omega$. The spaces $X_i$ are all metrizable, so Tychonoff. Clearly, the space $X[\mc{A}]$ is locally compact.  It is easy to prove in $\mathbf{ZF}$ that sums of completely regular spaces are completely regular. Hence  $X[\mc{A}]$ is also completely regular in $\mathbf{ZF}$. 

\begin{thm} The following implications are true in every model of $\mathbf{ZF}$:
\begin{enumerate}
\item[(i)] $C^{\ast}\mathbf{R}[\omega]$ implies $\mathbf{vDCP}(\omega)$;
\item[(ii)] the conjuction of $\mathbf{UFT}$ and $C^{\ast}\mathbf{R}[\omega]$ implies $\mathbf{CMC}$;
\item[(iii)] $\mathbf{CC}$ implies $C^{\ast}\mathbf{R}[\omega]$.
\end{enumerate}
\end{thm}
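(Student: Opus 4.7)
For (i), I would form, from a given family $\mathcal{A}=\{\langle A_i,\leq_i\rangle:i\in\omega\}$, the locally compact Tychonoff space $X[\mathcal{A}]=\bigoplus_i X_i$ described in the excerpt, and consider its one-point Alexandroff compactification $\alpha X[\mathcal{A}]:=X[\mathcal{A}]\cup\{\infty\}$. Viewed as a Hausdorff compactification of the discrete subspace $Y:=\bigoplus_i A_i$, its remainder $(\bar A\cup\bar B)\cup\{\infty\}$ has all $a_i,b_i$ isolated and $\infty$ as the sole accumulation point (each $X_i$ being already compact, $\infty$ is accumulated only along tails $\bigcup_{i\geq N}X_i$), hence is homeomorphic to $\omega+1$. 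Applying $C^{\ast}\mathbf{R}[\omega]$, I would extend the continuous bounded function $f(a_i)=\tfrac{1}{i+1}$, $f(b_i)=-\tfrac{1}{i+1}$, $f(\infty)=0$ to some $\tilde f\in C^{\ast}(\alpha X[\mathcal{A}])$. Continuity of $\tilde f$ at $a_i$ and $b_i$ would force $\tilde f>0$ on a cofinite left tail and $\tilde f<0$ on a cofinite right tail of $(A_i,\leq_i)$, so $M_i:=\{x\in A_i:\tilde f(x)\geq 0\}$ would be a nonempty, bounded-above subset of $(A_i,\leq_i)\cong\mathbb{Z}$. In $\mathbf{ZF}$ such a set has a maximum $m_i\in A_i$, and $(m_i)_i$ would then be the sought $\mathbf{vDCP}(\omega)$-choice function.

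For (iii), under $\mathbf{CC}$, given $\alpha X$ and $R=\{y_n\}_n\cup\{y_\infty\}$ as hypothesized, Proposition 2.14 guarantees that $\alpha X$ is Tychonoff (hence regular). For each $n$, regularity would provide an open $V_n\ni y_n$ with $\overline{V_n}\cap R=\{y_n\}$, and complete regularity (via Lemma 2.10) a continuous $h_n:\alpha X\to[0,1]$ equal to $1$ at $y_n$ and $0$ off the refined pairwise-disjoint set $V_n':=V_n\setminus\bigcup_{m<n}\overline{V_m}$. I would pick the sequences $(V_n)$ and $(h_n)$ by $\mathbf{CC}$ and, for any continuous $f:R\to\mathbb{R}$ with values $r_n\to r_\infty$, define
\[
\tilde f(x) := r_\infty + \sum_{n\in\omega}(r_n-r_\infty)h_n(x),
\]
which is well-defined because pairwise disjointness of the $V_n'$'s leaves at most one nonzero term per $x$. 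Continuity on each open $V_n'$ is immediate; at any other $x$ and $\epsilon>0$, I would choose $N$ with $|r_n-r_\infty|<\epsilon$ for $n\geq N$, then use continuity of the finitely many $h_0,\dots,h_{N-1}$ at $x$ (each vanishing since $x\notin V_n'$) to produce a neighborhood on which every summand has absolute value less than $\epsilon$. That would exhibit $\tilde f\in C(\alpha X)$ extending $f$, so $R$ is $C^{\ast}$-embedded.

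For (ii), under $\mathbf{UFT}$ and $C^{\ast}\mathbf{R}[\omega]$, (i) already supplies $\mathbf{vDCP}(\omega)$. Given a sequence $(X_n)$ of nonempty (WLOG infinite) sets, my plan is to use $\mathbf{UFT}$ to build, in a choice-free manner, a family $\mathcal{A}=\{\langle B_n,\preceq_n\rangle\}_n$ of $\mathbb{Z}$-ordered sets together with a $\mathbf{ZF}$-definable assignment $\phi_n:B_n\to[X_n]^{<\omega}\setminus\{\emptyset\}$; applying $\mathbf{vDCP}(\omega)$ to $\mathcal{A}$ would then pick $b_n\in B_n$, and $F_n:=\phi_n(b_n)$ would give the required nonempty finite subset of $X_n$, establishing $\mathbf{CMC}$.

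The hardest part is (ii): a naive attempt to produce $(B_n,\preceq_n)$ by ordering each $X_n$ individually would re-introduce an instance of countable choice on $\{\mathrm{LinOrd}(X_n):n\in\omega\}$, which $\mathbf{UFT}$ alone does not provide (the Ordering Principle guarantees one linear order per $X_n$, but not a simultaneous choice). My plan for circumventing this is to encode the simultaneous $\mathbb{Z}$-ordering problem for $(X_n)$ as a single Boolean algebra $\mathcal{B}$ — for instance, the algebra generated by the finite partial orderings on $\bigoplus_n X_n$ that are consistent with making each $X_n$ contain a designated $\mathbb{Z}$-indexed subset — so that a single application of $\mathbf{UFT}$ to $\mathcal{B}$ delivers an ultrafilter coding a coherent family $(B_n,\preceq_n,\phi_n)$ at one stroke; (i) then closes the argument.
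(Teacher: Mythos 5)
Your arguments for (i) and (iii) are essentially sound. Part (i) is the paper's own proof: the same space $X[\mathcal{A}]$, the same Hausdorff compactification of the discrete union of the $A_i$'s with remainder $\bar A\cup\bar B\cup\{\infty\}\cong\omega+1$, and the same extraction of a choice function as the maximum of a nonempty, bounded-above subset of each copy of $\mathbb{Z}$ (only your signs are reversed). Part (iii) is correct but takes a more explicit route than the paper: the authors use $\mathbf{CC}$ to choose, for each nonempty finite $F\subseteq\omega$, a function in $C(\alpha X)$ separating $F$ from $(\omega+1)\setminus F$, and then invoke a modified Tietze--Urysohn argument; your locally finite sum $r_\infty+\sum_n(r_n-r_\infty)h_n$ over the pairwise disjoint open sets $V_n'$ constructs the extension directly, which is arguably cleaner (note that you need two successive applications of $\mathbf{CC}$, first for $(V_n)_n$ and then for $(h_n)_n$, since each $V_n'$ depends on the earlier $V_m$'s; this is unproblematic).

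Part (ii), however, has a genuine gap, and you have flagged it yourself: the reduction of $\mathbf{CMC}$ to $\mathbf{vDCP}(\omega)$ via ``a single Boolean algebra whose ultrafilters code a coherent family $(B_n,\preceq_n,\phi_n)$'' is a plan, not a proof. You give no description of the algebra precise enough to check that an ultrafilter on it would actually yield $\mathbb{Z}$-ordered sets $B_n$ together with definable maps into the nonempty finite subsets of $X_n$, and it is far from clear that $\mathbf{UFT}+\mathbf{vDCP}(\omega)$ implies $\mathbf{CMC}$ at all; the paper certainly does not assert this, since its implications only give $C^{\ast}\mathbf{R}[\omega]\Rightarrow\mathbf{vDCP}(\omega)$ and not the converse. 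The paper's proof of (ii) runs in the opposite direction, by contraposition: if $\mathbf{CMC}$ fails, then the proof of Theorem 3 of \cite{HKRR} yields a Tychonoff space $Z$ containing a compact copy $C$ of $\omega+1$ such that $Z\setminus C$ is dense in $Z$ and $C$ is not $C^{\ast}$-embedded in $Z$; one then uses $\mathbf{UFT}$ (via Proposition 1.2) to embed $Z$ densely in a Tychonoff compactification $\gamma Z$, and observes that $\gamma Z$ is a Tychonoff compactification of the locally compact dense subspace $Z_0=\gamma Z\setminus C$ whose remainder $C\cong\omega+1$ is still not $C^{\ast}$-embedded, so $C^{\ast}\mathbf{R}[\omega]$ fails. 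The role of $\mathbf{UFT}$ is thus to manufacture a compactification of a witness to $\lnot\mathbf{CMC}$, not to manufacture orderings; without some such device your part (ii) does not go through.
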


\begin{proof} (i) Let us fix a family $\mc{A}=\{ \langle A_i, \leq_i\rangle: i\in\omega\}$ as in the definition of $\mathbf{vDCP}(\omega)$, as well as sets $\bar{A}=\{a_i: i\in\omega\}, \bar{B}=\{b_i: i\in \omega\}$ as in the construction of $X[\mc{A}]$ described above. We consider the Alexandroff compactification $\alpha_a X[\mc{A}]=X[\mc{A}]\cup\{\infty\}$ of $X[\mc{A}]$, the set $K=\bar{A}\cup\{\infty\}\cup\bar{B}$ and the subspace $X=(\alpha_a X[\mc{A}])\setminus K$ of $\alpha_a X[\mc{A}]$.  Of course, the space $X$ is discrete and dense in $\alpha_a X[\mc{A}]$. In view of Proposition 2.12, the compactification $\alpha X=\alpha_a X[\mc{A}]$ of $X$ is Tychonoff.   We define a continuous function $g: K\to [-1, 1]$ by putting $g(\infty)=0$, while $g(a_i)=\frac{-1}{i+1}$ and $g(b_i)=\frac{1}{i+1}$ for each $i\in\omega$. Suppose that $K$ is $C^{\ast}$-embedded in $\alpha X$. Then $g$ has a continuous extension to a function $\tilde{g}:\alpha X\to [-1, 1]$. In much the same way,  as in Section 3 of \cite{HKRR}, for each $i\in\omega$, we can define $t(i)=\max\{ x\in A_i: \tilde{g}(x)<0\}$ to obtain a choice function $t\in\prod_{i\in\omega} A_i$. This, together with the fact that $K$ is homeomorphic with the Alexandroff compactification $\omega+1$ of $\omega$, implies that (i) holds.

(ii)  Now, let us suppose that $\mathbf{CMC}$ is false.  In this case, notice that it was shown in the proof to Theorem 3 of \cite{HKRR} that there exists a Tychonoff space $Z$ such that, for a compact subset $C$ of $Z$, the set $Z\setminus C$ is dense in $Z$, while $C$ is not $C^{\ast}$-embedded in $Z$ and $C$  is homeomophic with $\omega+1$.  Assume that $\mathbf{UFT}$ is satisfied.  It  follows from Proposition 1.2 that the space $Z$ has a Tychonoff compactification $\gamma Z$. The subspace $Z_0=(\gamma Z)\setminus C$ of $\gamma Z$ is a locally compact space such that the remainder $C=(\gamma Z)\setminus Z_0$ is not $C^{\ast}$-embedded in the Tychonoff compactification $\gamma Z$ of $Z_0$. 

(iii)  Assume that $\mathbf{CC}$ holds. Let $\alpha X$ be a Hausdorff compactification of a Tychonoff space $X$ such that $\alpha X\setminus X=\omega+1$. Denote by $\mc{F}$ the collection of all non-empty finite subsets of $\omega$. Then $\mc{F}$ is countable. Let $\mc{F}=\{F_n: n\in\omega\}$. For each $n\in\omega$, let $\mc{G}_n$ be the collection of all functions $g\in C(\alpha X)$ such that $(\omega+1)\setminus F_n\subseteq g^{-1}(0)$ and $F_n\subseteq g^{-1}(1)$. The collections $\mc{G}_n$ are all non-empty because, by Proposition 2.14, the space $\alpha X$ is Tychonoff. Since $\mathbf{CC}$ holds, the collection $\{\mc{G}_n: n\in\omega\}$ has a choice function.  Let $G\in\prod_{n\in\omega}\mc{G}_n$. Let $A,B$ be a pair of non-empty disjoint closed subsets of $\omega+1$. Then $A\in\mc{F}$ or $B\in\mc{F}$. Suppose that $n_0\in\omega$ is such that $A=F_{n_0}$. The function $g=G(n_0)$ is such that $B\subseteq g^{-1}(0)$ and $A\subseteq g^{-1}(1)$. With this observation in hand, if $f\in C(\omega+1)$, we can slightly modify the well-known standard $\mathbf{ZFC}$-proof of Tietze-Urysohn Extension Theorem to find in $\mathbf{ZF+CC}$ a continuous extension of $f$ over $\alpha X$. 
\end{proof}

\begin{cor} The following sentences are relatively consistent with $\mathbf{ZF}$:
\begin{enumerate}
\item[(i)] There exists a Tychonoff compactification $\gamma Y$ of a locally compact space $Y$ such that $\mathbf{TET}(\gamma Y)$ fails but $\gamma Y\setminus Y$ is $C^{\ast}$-embedded in $\gamma Y$. 
\item[(ii)] $C^{\ast}\mathbf{R}[\omega]$ is false.
\end{enumerate}
\end{cor}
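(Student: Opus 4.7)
The plan is to establish the two items separately, each by invoking previously stated results together with a standard observation.

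For (ii), I would argue by contrapositive of Theorem 2.19(i): since $C^{\ast}\mathbf{R}[\omega]$ implies $\mathbf{vDCP}(\omega)$ in $\mathbf{ZF}$, any model of $\mathbf{ZF}$ in which $\mathbf{vDCP}(\omega)$ fails is a model in which $C^{\ast}\mathbf{R}[\omega]$ fails. Models of $\mathbf{ZF}$ violating $\mathbf{vDCP}(\omega)$ are known to exist from the cited literature (cf.\ \cite{vD}, \cite{HKRR}, and Section~4.7 of \cite{Her}), so (ii) follows at once.

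For (i), the strategy is to turn the compact Tychonoff space produced in Remark 2.16 into a one-point-remainder compactification. By Remark 2.16, there is a model $\mathcal{M}$ of $\mathbf{ZF}$ containing a compact Tychonoff space $X$ for which $\mathbf{TET}(X)$ fails. This $X$ must be infinite, since every finite Hausdorff space is discrete and trivially satisfies the Tietze extension property. Being an infinite compact Hausdorff space, $X$ cannot be entirely discrete, so there exists a non-isolated point $p\in X$. Set $Y=X\setminus\{p\}$. Then $Y$ is open in the compact Hausdorff space $X$, so $Y$ is locally compact and Tychonoff; since $p$ is non-isolated, $Y$ is dense in $X$, whence $\gamma Y:=X$ is a Tychonoff compactification of $Y$ with remainder $\gamma Y\setminus Y=\{p\}$. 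A singleton subspace is trivially $C^{\ast}$-embedded: any bounded continuous function on $\{p\}$ is a constant, extending to the same constant function on all of $X$. Finally, $\mathbf{TET}(\gamma Y)=\mathbf{TET}(X)$ fails in $\mathcal{M}$, yielding (i).

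The only subtle point in the plan is the inference that the space $X$ supplied by Remark 2.16 must admit a non-isolated point; this is the standard observation that an infinite compact Hausdorff space is never discrete. Beyond that, once Theorem 2.19 and Remark 2.16 are in hand, the argument is essentially bookkeeping, and I do not anticipate any serious obstacle.
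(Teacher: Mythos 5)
Your argument is correct and essentially coincides with the paper's: part (ii) is obtained exactly as you describe, by combining Theorem 2.18(i) (which you miscite as ``Theorem 2.19(i)'') with the known consistency of the negation of $\mathbf{vDCP}(\omega)$, and part (i) rests on the same device of deleting one non-isolated point from a compact Tychonoff space on which $\mathbf{TET}$ fails, so that the remainder is a singleton and hence trivially $C^{\ast}$-embedded. The only difference is that the paper instantiates that space concretely as $\alpha_a X[\mathcal{A}]$ taken from the proof of Theorem 2.18(i), whereas you invoke the abstract existence statement of Remark 2.16, which traces back to the same construction in \cite{HKRR}.
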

\begin{proof} Let us consider the space $X=\alpha_a X[\mc{A}]\setminus K$ and its compactification $\alpha X=\alpha_a X[\mc{A}]$ used in the proof to (i) of Theorem 2.18. Let $c\in\alpha X$ be an accumulation point of $X$, let $Y=\alpha X\setminus\{c\}$ and $\gamma Y=\alpha X$. It was shown in the proof to Theorem 2.18 (i) that if $\mc{A}$ is such that $\{A_i: i\in\omega\}$ does not have a choice function, then $\alpha X$ is not a T space. To complete the proof, it suffices to use Theorem 2.18 together with the fact that there is a model of $\mathbf{ZF}$ in which $\mathbf{vDCP}$ fails.
\end{proof}

We are going to prove that it is relatively consistent with \textbf{ZF} that
there exists a Tychonoff space which has a strange Hausdorff
compactification. We shall deduce several surprising consequences of the
existence of strange compactifications in some models of \textbf{ZF}.

\begin{thm}
$\mathbf{[ZF]}$ Let $Y$ be a given non-empty  compact Hausdorff space. Then
there exist a discrete space $D_{Y}$ and a Hausdorff compactification $%
\gamma D_{Y}$ of $D_{Y}$ such that $(\gamma D_{Y})\setminus D_{Y}$ is
homeomorphic with $Y$.
\end{thm}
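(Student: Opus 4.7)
The plan is to build $D_Y$ and $\gamma D_Y$ explicitly, with $D_Y$ consisting essentially of countably many ``ghost copies'' of each point of $Y$. Concretely, set $D_Y = Y \times \omega$ with the discrete topology, let the underlying set of $\gamma D_Y$ be the disjoint union $D_Y \cup Y$, declare every point of $D_Y$ to be isolated in $\gamma D_Y$, and take as basic neighbourhoods of each $y \in Y$ the sets
\[
W(V, F) = V \cup \bigl((V \times \omega) \setminus F\bigr),
\]
where $V$ is an open neighbourhood of $y$ in $Y$ and $F$ is a finite subset of $D_Y$. The identity $W(V_1, F_1) \cap W(V_2, F_2) = W(V_1 \cap V_2, F_1 \cup F_2)$ shows that these sets, together with the singletons $\{d\}$ for $d \in D_Y$, form a base, so the topology is well defined.

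Next I would verify the structural requirements. $D_Y$ is open, discrete and dense in $\gamma D_Y$, the last because $V \times \omega$ is infinite while $F$ is finite whenever $V \neq \emptyset$. The subspace topology that $Y$ inherits from $\gamma D_Y$ agrees with its original topology, since $W(V, F) \cap Y = V$. Hausdorffness follows from the isolation of points of $D_Y$ together with the Hausdorff property of $Y$: two distinct points $y_1, y_2 \in Y$ admit disjoint open neighbourhoods $V_1, V_2$ in $Y$, and $W(V_1, \emptyset)$, $W(V_2, \emptyset)$ are then disjoint in $\gamma D_Y$; the remaining cases are trivial.

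The crucial and only nontrivial step is proving compactness of $\gamma D_Y$ in $\mathbf{ZF}$, which requires care to avoid any hidden appeal to choice. Given an open cover $\mathcal{U}$ of $\gamma D_Y$, I would form, without making any choices, the collection $\mathcal{B}$ of all triples $(V, F, U)$ with $V$ open in $Y$, $F$ a finite subset of $D_Y$, $U \in \mathcal{U}$, and $W(V, F) \subseteq U$. Then $\{V : \exists F, U \text{ with } (V, F, U) \in \mathcal{B}\}$ is an open cover of $Y$, because any $y \in Y$ lies in some $U \in \mathcal{U}$, which therefore contains a basic neighbourhood $W(V, F)$ of $y$. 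Compactness of $Y$ yields a finite subcover $V_1, \ldots, V_m$; since only finitely many selections are now required, $\mathbf{ZF}$ suffices to pick corresponding $F_i$ and $U_i \in \mathcal{U}$ with $W(V_i, F_i) \subseteq U_i$ for each $i \leq m$. The points of the finite set $F_1 \cup \cdots \cup F_m$ then admit finitely many further choices of covering elements of $\mathcal{U}$, producing a finite subcover of $\gamma D_Y$. The only conceivable obstacle is exactly this choice-free passage from $\mathcal{U}$ to a cover of $Y$, which is what the auxiliary set $\mathcal{B}$ is designed to secure.
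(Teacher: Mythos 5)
Your proposal is correct and is essentially the paper's own construction up to relabeling: the paper takes $\gamma D_Y=\omega\times Y$ with $D_Y=(\omega\setminus\{0\})\times Y$ discrete and neighbourhoods of $\langle 0,y\rangle$ of the form $(\omega\times U)\setminus K$ with $K$ finite, which is exactly your $W(V,F)$. Your explicit choice-free verification of compactness (via the set $\mathcal{B}$ of triples and finitely many selections) fills in a step the paper dismisses as obvious, but it is the same argument one would supply there.
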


\begin{proof}  Let $D_Y=(\omega\times Y)\setminus(\{0\}\times Y)$ be considered with its discrete topology. Let $\gamma D_Y=\omega\times Y$ be equipped with the following topology:
\begin{itemize}
\item[(i)] all points of $(\gamma D_Y)\setminus (\{0\}\times Y)$ are isolated;
\item[(ii)] if $y\in Y$, then a base of neighbourhoods of the point $\langle 0, y\rangle$ in $\gamma D_Y$ consists of all sets of the form: $(\omega\times U)\setminus K$ where $K$ is a finite subset of $D_Y$, while $U$ is open in $Y$ and $y\in U$.
\end{itemize}
Obviously,  the topological space $\gamma D_Y$ is a compact Hausdorff space such that $D_Y$ is a dense subspace of $\gamma D_Y$, while the space $Y$ is homeomorphic with the remainder $(\gamma D_Y)\setminus D_Y$.
\end{proof}

\begin{thm}
The following sentences are relatively consistent with $\mathbf{ZF}$:

\begin{enumerate}
\item[(i)] There exists a discrete space which has a strange
compactification.

\item[(ii)] There exists a Tychonoff space $X$ which has non-equivalent
Hausdorff compactifications $\alpha X$ and $\gamma X$ such that $%
C_{\alpha}(X)=C_{\gamma}(X)$.
\end{enumerate}
\end{thm}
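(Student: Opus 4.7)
The plan is to handle both (i) and (ii) with a single construction based on Theorem 2.20. The key set-theoretic input, drawn from an appropriate model of $\mathbf{ZF}$, is a non-empty compact Hausdorff space $Y$ with $|Y|\geq 2$ that is not functionally Hausdorff: there exist distinct $y_1,y_2\in Y$ with $g(y_1)=g(y_2)$ for every $g\in C(Y)$. Such a $Y$ is a fortiori not completely regular (in a Hausdorff space, complete regularity forces functional separation of distinct points), which aligns with the model of $\mathbf{ZF}$ promised in the introduction where some Tychonoff spaces admit Hausdorff, not completely regular compactifications.

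Apply Theorem 2.20 to this $Y$ to obtain the discrete space $D_Y$ together with the Hausdorff compactification $\gamma D_Y$ whose remainder is homeomorphic to $Y$. For (i), note that $Y$ sits as a closed subspace of $\gamma D_Y$, so any witness to non-complete-regularity inside $Y$ remains a witness inside $\gamma D_Y$; hence $\gamma D_Y$ fails to be completely regular, and Proposition 2.5 immediately gives that $\gamma D_Y$ is strange over the discrete, hence Tychonoff, space $D_Y$. For (ii), define the companion compactification $\alpha D_Y:=\gamma D_Y/\!\sim$, where $p\sim q$ iff $h(p)=h(q)$ for every $h\in C(\gamma D_Y)$, and let $q:\gamma D_Y\to\alpha D_Y$ be the quotient map. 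Three observations drive the argument: (a)~$\sim$ is trivial on $D_Y$, because each singleton $\{x\}$ with $x\in D_Y$ is clopen in $\gamma D_Y$ (its complement contains the basic neighbourhood $(\omega\times Y)\setminus\{x\}$ of every point of $\{0\}\times Y$), so its characteristic function is a separating member of $C(\gamma D_Y)$; (b)~the restriction $C(\gamma D_Y)\to C(Y)$, $h\mapsto h|_Y$, is surjective, since for any $g\in C(Y)$ the function $\langle n,y\rangle\mapsto g(y)$ on $D_Y$ has the obvious continuous extension obtained by declaring its value at $\langle 0,y\rangle$ to be $g(y)$ (continuity is immediate from the basic neighbourhood description of $\gamma D_Y$); hence $\sim$ coincides on $Y$ with functional indistinguishability in $Y$; and (c)~by the choice of $Y$ there are distinct $y_1,y_2\in Y$ with $y_1\sim y_2$, so $q$ fails to be injective.

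From (a) plus discreteness of $D_Y$ (each singleton is open in $\gamma D_Y$, hence its $q$-image is open in $\alpha D_Y$), the restriction $q|_{D_Y}$ is a homeomorphic embedding onto a dense subspace; and $\alpha D_Y$, being the continuous image of the compact $\gamma D_Y$ whose distinct $\sim$-classes are separated by continuous real-valued functions factoring through $q$, is compact Hausdorff. Non-equivalence of $\gamma D_Y$ and $\alpha D_Y$ follows from the uniqueness of continuous extensions along the dense set $D_Y$: any continuous $r:\alpha D_Y\to\gamma D_Y$ fixing $D_Y$ would force $r\circ q=\mathrm{id}_{\gamma D_Y}$ and $q\circ r=\mathrm{id}_{\alpha D_Y}$, incompatible with $q$ being non-injective. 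Finally, $C_\alpha(D_Y)\subseteq C_\gamma(D_Y)$ by pullback through $q$, and conversely every continuous extension to $\gamma D_Y$ of an $f\in C_\gamma(D_Y)$ is, by the very definition of $\sim$, constant on $\sim$-classes, hence factors through $q$ to yield a continuous extension to $\alpha D_Y$. The main obstacle is the initial set-theoretic step: locating in some model of $\mathbf{ZF}$ a compact Hausdorff space that is not functionally Hausdorff; granted this, the remaining verifications are elementary consequences of the explicit description of $\gamma D_Y$ supplied by Theorem 2.20 together with standard quotient-topology reasoning.
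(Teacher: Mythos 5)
Your argument is correct, and for part (ii) it takes a genuinely different route from the paper's. Both proofs start from Theorem 2.20 applied to a ``bad'' compact Hausdorff space $Y$, and part (i) is handled identically (non-complete regularity of $Y$ passes to the closed subspace $Y\subseteq\gamma D_Y$, then Proposition 2.5 applies). The difference is in the hypothesis on $Y$ and in the choice of the companion compactification. The paper takes $Y$ to be an uncountable compact Hausdorff space on which \emph{every} continuous real function is constant (this is the set-theoretic input you flagged as the main obstacle; it is supplied by \cite{GT} and by the failure of Form 78 in the models $\mathcal{N}3$ and $\mathcal{N}8$ of \cite{HR}, and it of course implies your weaker requirement that $Y$ not be functionally Hausdorff). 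With that stronger hypothesis the paper can simply take $\alpha X$ to be the one-point compactification of $D_Y$: every $\tilde f\in C(\gamma D_Y)$ is constant on $Y$, hence factors through the collapse of $Y$ to a point, giving $C_{\gamma}(X)=C_{\alpha}(X)$ in one line, and non-equivalence is immediate from comparing remainders. You instead quotient $\gamma D_Y$ by functional indistinguishability; this costs you the verifications that the quotient is a Hausdorff compactification of $D_Y$ (all of which you carry out correctly: saturation of the singletons of $D_Y$, surjectivity of restriction $C(\gamma D_Y)\to C(Y)$, separation of distinct classes, density, and the extension argument for $C_{\alpha}(D_Y)=C_{\gamma}(D_Y)$), but it buys generality: your construction works for any compact Hausdorff $Y$ that merely fails to be functionally Hausdorff, whereas the paper's one-point shortcut genuinely needs all real functions on $Y$ to be constant. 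The only thing to add to make your proof self-contained is the citation locating the required $Y$ in a model of $\mathbf{ZF}$; the references just mentioned close that gap.
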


\begin{proof}
Let $\mathcal{M}$ be a  model of $\mathbf{ZF}$ such there exists in $\mathcal{M}$ an uncountable Hausdorff space $Y$ which is compact and such that all continuous real functions on $Y$ are constant (see  \cite{GT}, as well as Form  78 in models $\mathcal{N}3$  and $\mathcal{N}8$ in \cite{HR}). In view of Theorem 2.20, there exist in $\mathcal{M}$ a discrete space $D_Y$  and a Hausdorff  compactification $\gamma D_Y$ of $D_Y$ such that  $(\gamma D_Y)\setminus D_Y$ is homeomorphic with $Y$. Put $X=D_Y$ and, for simplicity, assume that $\gamma X\setminus X=Y$. Then it holds true in $\mathcal{M}$ that $\gamma X$ is a strange compactification of $X$. Let $\alpha X$ be the  one-point compactification  in $\mathcal{M}$ of $X$. Of course, $\gamma X$ and $\alpha X$ are non-equivalent, while $C_{\gamma}(X)=C_{\alpha}(X)$. 
\end{proof}

\begin{cor}
It is not a theorem of \textbf{ZF} that if $\alpha X$ and $\gamma X$ are
Hausdorff compactifications of a Tychonoff space $X$ such that $%
C_{\alpha}(X)\subseteq C_{\gamma}(X)$, then $\alpha X\leq \gamma X$.
\end{cor}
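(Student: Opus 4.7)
The plan is to deduce Corollary 2.22 directly from Theorem 2.21 by a short contrapositive argument, exploiting the fact that the hypothesis ``$C_{\alpha}(X)\subseteq C_{\gamma}(X)$'' becomes symmetric once we have an equality $C_{\alpha}(X)=C_{\gamma}(X)$.

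First I would work inside the model $\mathcal{M}$ of $\mathbf{ZF}$ produced in the proof of Theorem 2.21. There we have a discrete space $X=D_Y$ together with two Hausdorff compactifications: the one-point compactification $\alpha X$ and the strange compactification $\gamma X$ with remainder homeomorphic to the uncountable compact Hausdorff space $Y$ all of whose real-valued continuous functions are constant. Theorem 2.21 tells us that, in $\mathcal{M}$, the compactifications $\alpha X$ and $\gamma X$ are not equivalent, yet $C_{\alpha}(X)=C_{\gamma}(X)$.

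Next I would argue by contradiction. Suppose, for the sake of contradiction, that the implication
\[
C_{\alpha}(X)\subseteq C_{\gamma}(X)\ \Longrightarrow\ \alpha X\leq\gamma X
\]
were a theorem of $\mathbf{ZF}$ for all pairs of Hausdorff compactifications of a Tychonoff space. Then, applying it inside $\mathcal{M}$ to the pair $(\alpha X,\gamma X)$ of Theorem 2.21, the equality $C_{\alpha}(X)=C_{\gamma}(X)$ would yield both inclusions $C_{\alpha}(X)\subseteq C_{\gamma}(X)$ and $C_{\gamma}(X)\subseteq C_{\alpha}(X)$, hence both $\alpha X\leq\gamma X$ and $\gamma X\leq\alpha X$. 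By the characterization of equivalence of Hausdorff compactifications recalled in the introduction, this forces $\alpha X\thickapprox\gamma X$, contradicting the non-equivalence of $\alpha X$ and $\gamma X$ in $\mathcal{M}$.

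There is essentially no technical obstacle: the whole work is already carried out in Theorem 2.21, and the corollary just repackages it. The only point to be careful about is to invoke the $\mathbf{ZF}$-valid equivalence ``$\alpha X\leq\gamma X$ and $\gamma X\leq\alpha X$ imply $\alpha X\thickapprox\gamma X$ for Hausdorff compactifications'', which was explicitly stated in the introduction and requires no form of choice.
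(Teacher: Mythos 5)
Your argument is correct and is exactly the deduction the paper intends: Corollary 2.22 is stated without proof as an immediate consequence of Theorem 2.21, obtained by applying the hypothetical implication in both directions to the pair with $C_{\alpha}(X)=C_{\gamma}(X)$ and invoking the $\mathbf{ZF}$-valid fact from the introduction that mutual domination of Hausdorff compactifications yields equivalence. Nothing is missing.
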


One can easily prove the following:

\begin{thm}
$\mathbf{[ZF]}$ If Hausdorff compactifications $\alpha X$ and $\gamma X$ of
a topological space $X$ are both completely regular, then $\alpha
X\leq\gamma X$ if and only if $C_{\alpha}(X)\subseteq C_{\gamma}(X)$.
\end{thm}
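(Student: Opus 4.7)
The plan is to handle the two directions separately, using Proposition 2.7 (which provides the $\mathbf{ZF}$-proof that a completely regular Hausdorff compactification is generated by its own algebra of extendable functions) as the key tool for the nontrivial implication.

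For the direction ``$\alpha X\leq\gamma X$ implies $C_\alpha(X)\subseteq C_\gamma(X)$'', I would take any $g\in C_\alpha(X)$ with continuous extension $\tilde g:\alpha X\to\mathbb R$, and pull it back along the continuous map $f:\gamma X\to\alpha X$ furnished by the definition of $\alpha X\leq\gamma X$ (so $f\circ\gamma=\alpha$). Then $\tilde g\circ f:\gamma X\to\mathbb R$ is continuous and restricts to $g$ on $X$, witnessing $g\in C_\gamma(X)$. This direction uses no regularity hypothesis and no choice.

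For the converse, assume $C_\alpha(X)\subseteq C_\gamma(X)$ and write $F_\alpha=C_\alpha(X)$, $F_\gamma=C_\gamma(X)$. By Proposition 2.7, $\alpha X$ is equivalent to the compact space $e_{F_\alpha}X\subseteq\mathbb R^{F_\alpha}$ and $\gamma X$ is equivalent to $e_{F_\gamma}X\subseteq\mathbb R^{F_\gamma}$; in particular both closures are compact without invoking $\mathbf{UFT}$, since the compactness comes from the equivalence with the given Hausdorff compactifications. The inclusion $F_\alpha\subseteq F_\gamma$ gives a canonical continuous projection $\pi:\mathbb R^{F_\gamma}\to\mathbb R^{F_\alpha}$ defined by restriction of coordinates, and a direct check shows $\pi\circ e_{F_\gamma}=e_{F_\alpha}$. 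Since $e_{F_\gamma}X$ is compact, $\pi(e_{F_\gamma}X)$ is a compact, hence closed, subset of $\mathbb R^{F_\alpha}$ containing $\pi(e_{F_\gamma}(X))=e_{F_\alpha}(X)$, so it contains the closure $e_{F_\alpha}X$; conversely, $\pi$ maps the closure of $e_{F_\gamma}(X)$ into the closure of $\pi(e_{F_\gamma}(X))$, yielding $\pi(e_{F_\gamma}X)=e_{F_\alpha}X$. Composing the equivalences $\alpha X\thickapprox e_{F_\alpha}X$ and $\gamma X\thickapprox e_{F_\gamma}X$ with $\pi\restriction e_{F_\gamma}X$ produces a continuous map $\gamma X\to\alpha X$ which is the identity on $X$, giving $\alpha X\leq\gamma X$.

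There is no real obstacle here in $\mathbf{ZF}$: the only place where one might worry is the need for compactness of the evaluation-space closures, which in general requires $\mathbf{UFT}$, but here the complete regularity hypothesis together with Proposition 2.7 delivers these compactnesses directly. The argument is therefore a clean transcription of the classical proof, with the sole subtlety being to cite Proposition 2.7 rather than the $\mathbf{ZFC}$ fact that every Tychonoff compactification is generated by functions.
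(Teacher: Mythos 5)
Your proof is correct, and it is essentially the argument the paper intends: the paper states this result (Theorem 2.23) without proof, remarking only that "one can easily prove" it, and your route through Proposition 2.7 and the coordinate-restriction projection $\pi:\mathbb{R}^{F_\gamma}\to\mathbb{R}^{F_\alpha}$ is the natural $\mathbf{ZF}$-valid way to fill that gap, with compactness of $e_{F_\alpha}X$ and $e_{F_\gamma}X$ correctly obtained from complete regularity rather than from $\mathbf{UFT}$. The only cosmetic caveat is the degenerate case $X=\emptyset$ (Proposition 2.7 is stated for non-empty spaces), where both directions hold trivially.
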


\begin{p}
Find in $\mathbf{ZF}$ reasonable necessary and sufficient internal
conditions for a Tychonoff space to have no strange Hausdorff
compactification.
\end{p}

We recall that an \emph{amorphous set} is an infinite set $X$ such that if $%
A $ is an infinite subset of $X$, then the set $X\setminus A$ is finite (see
E.11 in Section 4.1 of \cite{Her}, Form 64 and Note 57 in \cite{HR}).
Amorphous sets exist, for instance, in $\mathbf{ZF}$-model $\mathcal{M}37$ of 
\cite{HR} (see also model $\mathbf{N}1$ of \cite{HR} together with entries (361,64) and (363, 64) on page 335 in \cite{HR}).

\begin{defi}
A topological space $\langle X, \tau\rangle$ will be called \emph{amorphous}
if $X$ is an amorphous set.
\end{defi}

\begin{prop}
$\mathbf{[ZF]}$ Every amorphous Hausdorff space is either discrete or a
one-point Hausdorff compactification of an amorphous discrete space.
\end{prop}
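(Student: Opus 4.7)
The plan is to analyze the set $I$ of isolated points of $X$ and show that $X$ has at most one non-isolated point, from which the dichotomy follows almost immediately.

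First I would establish the key lemma: in a Hausdorff space, every finite open set consists of isolated points (since singletons are closed, one can intersect a finite open $U$ with the complements of the other points to isolate each point of $U$). Therefore every open neighborhood of a non-isolated point of $X$ must be infinite, and since $X$ is amorphous, every such neighborhood is actually \emph{cofinite}.

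Next I would rule out two distinct non-isolated points. Suppose $p\neq q$ were both non-isolated. By Hausdorffness, pick disjoint open $U\ni p$ and $V\ni q$. By the previous step, both $U$ and $V$ are cofinite in $X$, but two cofinite subsets of the infinite set $X$ must meet (their complements are finite, hence so is the complement of their intersection), contradicting $U\cap V=\emptyset$. Hence there is at most one non-isolated point. If there is none, then $X=I$ is discrete and we are done.

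Otherwise let $\infty$ be the unique non-isolated point and put $D=X\setminus\{\infty\}$. Then $D$ consists entirely of isolated points of $X$, so the subspace topology on $D$ is discrete; and $D$ is amorphous because removing one point from an amorphous set leaves an amorphous set. It remains to verify that $X$ is the Alexandroff compactification $\alpha_{a}D$. The topology on $\alpha_{a}D$ declares the open sets through $\infty$ to be the complements of closed compact subsets of $D$, and in a discrete space compact means finite, so these are precisely the cofinite sets through $\infty$. In $X$, every open neighborhood of $\infty$ is cofinite by the key lemma; conversely every finite $F\subseteq D$ is closed in the Hausdorff space $X$, so $X\setminus F$ is open. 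Combined with the fact that all subsets of $D$ are open in $X$, the topology on $X$ matches the Alexandroff topology on $D\cup\{\infty\}$. Compactness of $X$ is then automatic (any cover contains a cofinite neighborhood of $\infty$, leaving only finitely many points to cover), so $X=\alpha_{a}D$.

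The only delicate point is the argument that at most one non-isolated point exists, because it is the one place where the amorphous hypothesis is used nontrivially (to upgrade ``infinite'' to ``cofinite''); the rest is the standard characterization of the one-point compactification of a discrete space and should go through unchanged in $\mathbf{ZF}$.
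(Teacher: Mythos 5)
Your proof is correct and follows essentially the same route as the paper's, which simply asserts that a non-discrete amorphous Hausdorff space has exactly one accumulation point and then identifies the space as the one-point compactification of the discrete complement; your write-up supplies the details (cofinite neighborhoods of non-isolated points, the Hausdorff separation contradiction, and the verification of the Alexandroff topology) that the paper leaves implicit.
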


\begin{proof} Let $X$ be an amorphous Hausdorff space. Suppose that $X$ is not discrete. Then $X$ has exactly  one accumulation point. Let $x_0$ be the unique accumulation point of $X$ and let $Y=X\setminus\{x_0\}$. Then $Y$ is a discrete amorphous space such that $X$ is a one-point Hausdorff compactification of $Y$.
\end{proof}

The following theorem, together with Theorem 2.21, points out that a
satisfactory solution to Problem 2.24 can be complicated even for discrete
spaces:

\begin{thm}
$\mathbf{[ZF]}$ Let $X$ be an infinite discrete space. Then every Hausdorff
compactification of $X$ is equivalent with the Alexandroff compactification of 
$X$ if and only if $X$ is amorphous.
\end{thm}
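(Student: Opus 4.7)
The plan is to prove both directions directly from the definition of an amorphous set. For the implication $(\Leftarrow)$, suppose $X$ is an amorphous discrete space and let $\gamma X$ be any Hausdorff compactification of $X$. I would first show that the remainder $\gamma X\setminus X$ is a single point. Assume toward contradiction that there are $y_{1}\neq y_{2}$ in $\gamma X\setminus X$, and pick disjoint open neighborhoods $U_{1},U_{2}\subseteq\gamma X$ of them. The key observation is that each $U_{i}\cap X$ must be infinite: if $U_{i}\cap X$ were finite, then since $\gamma X$ is $T_{1}$ the set $U_{i}\cap X$ would be closed in $\gamma X$, so $U_{i}\setminus(U_{i}\cap X)$ would be a non-empty open subset of $\gamma X$ containing $y_{i}$ but disjoint from $X$, contradicting density of $X$ in $\gamma X$. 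But then $U_{1}\cap X$ and $X\setminus(U_{1}\cap X)\supseteq U_{2}\cap X$ are two disjoint infinite subsets of $X$, contradicting amorphousness. Hence the remainder is a singleton $\{y\}$.

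Once the remainder is reduced to one point, the topology of $\gamma X$ is forced. A subset $A\subseteq X$ is closed in the compact Hausdorff space $\gamma X$ if and only if $A$ is compact in $\gamma X$, which holds if and only if $A$ is compact in $X$; since compact subsets of a discrete space are finite in $\mathbf{ZF}$ (open cover by singletons), the closed subsets of $\gamma X$ contained in $X$ are exactly the finite subsets of $X$. Consequently, the bijection $\gamma X\to\alpha_{a}X$ which is the identity on $X$ and sends $y$ to $\infty$ is a homeomorphism, so $\gamma X\thickapprox\alpha_{a}X$. For the converse $(\Rightarrow)$, suppose $X$ is infinite discrete but not amorphous; then there exists an infinite $A\subseteq X$ whose complement $B=X\setminus A$ is also infinite. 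Choose two distinct points $a,b\notin X$, form the Alexandroff compactifications $A\cup\{a\}$ and $B\cup\{b\}$ of the discrete subspaces $A$ and $B$, and let $\gamma X$ be their topological sum. Then $\gamma X$ is compact Hausdorff, contains $X$ as a dense subspace, and has remainder $\{a,b\}$ of cardinality two, so it cannot be equivalent to the one-point Alexandroff compactification of $X$.

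The main delicate point is keeping the argument choice-free. The step showing $U_{i}\cap X$ is infinite uses only $T_{1}$-separation and density, and the identification of compact subsets of a discrete space with finite sets uses only that an open cover by singletons has a finite subcover. All remaining pieces, namely that closed subsets of a compact space are compact, that compact subsets of a Hausdorff space are closed, and that the topological sum of two compact Hausdorff spaces is compact Hausdorff, are provable outright in $\mathbf{ZF}$.
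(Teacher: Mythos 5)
Your proof is correct and takes essentially the same route as the paper: amorphousness forces the remainder to be a singleton via disjoint neighborhoods of two would-be remainder points, and a non-amorphous $X$ splits into two infinite pieces yielding a two-point compactification. You merely fill in details the paper leaves implicit (that $U_i\cap X$ is infinite, and that a one-point Hausdorff compactification of a discrete space is homeomorphic over $X$ to $\alpha_a X$).
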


\begin{proof}  Let $\alpha X$ be a Hausdorff compactification of $X$. If $\alpha X\setminus X$ is not a singleton, then there is a pair $x,y$ of distinct points of $\alpha X\setminus X$, so there exists a pair $U,V$ of disjoint open sets in $\alpha X$ such that $x\in U$ and $y\in V$. Then the sets $U\cap X$ and $V\cap X$ are disjoint infinite  subsets of $X$, hence $X$ cannot be amorphous.  On the other hand, if $X$ is not amorphous, then there are disjoint infinite subsets $Y, Z$ of $X$ such that $X=Y\cup Z$, which implies that $X$ has a two-point Hausdorff compactification.  
\end{proof}

\begin{cor}
$\mathbf{[ZF]}$ If a discrete space $X$ is amorphous, then its \v Cech-Stone
compactification is the Alexandroff compactification of $X$.
\end{cor}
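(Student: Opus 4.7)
The plan is to verify that the Alexandroff compactification $\alpha_a X$ satisfies the universal property of Definition 1.3, namely that every continuous map from $X$ into a compact Hausdorff space extends continuously to $\alpha_a X$. Once this is done, Theorem 2.27 takes care of uniqueness: every Hausdorff compactification of $X$ is equivalent to $\alpha_a X$, so $\alpha_a X$ must play the role of $\beta X$. Note that for discrete $X$ the only open sets of $\alpha_a X$ containing $\infty$ are those with finite complement, since the closed and compact subsets of a discrete space are precisely the finite subsets.

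Fix a compact Hausdorff space $K$ and a function $f:X\to K$ (automatically continuous since $X$ is discrete). The candidate extension $\tilde{f}:\alpha_a X\to K$ must satisfy $\tilde{f}|_X=f$, so only the value $y^{\ast}:=\tilde{f}(\infty)$ has to be chosen. The first step is to produce $y^{\ast}$. The family $\{\text{cl}_K f(X\setminus F) : F\subseteq X \text{ finite}\}$ has the finite intersection property, and in $\mathbf{ZF}$ the compactness of $K$ (in its finite-intersection-property formulation, obtained from the open-cover definition by complements alone) forces this family to have nonempty intersection; pick any $y^{\ast}$ in the intersection.

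The decisive second step is to show that, for every open neighbourhood $V$ of $y^{\ast}$ in $K$, the preimage $f^{-1}(V)$ is cofinite in $X$. From $y^{\ast}\in \text{cl}_K f(X\setminus F)$ for each finite $F\subseteq X$, we obtain $V\cap f(X\setminus F)\neq\emptyset$ and hence $f^{-1}(V)\cap(X\setminus F)\neq\emptyset$ for every finite $F\subseteq X$; thus $f^{-1}(V)$ is infinite. Here is the sole place where the amorphous hypothesis enters: since every infinite subset of $X$ is cofinite, $f^{-1}(V)$ must itself be cofinite.

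The final step is routine. Set $\tilde{f}(\infty)=y^{\ast}$ and $\tilde{f}|_X=f$; for any open $V\subseteq K$ containing $y^{\ast}$, the set $\tilde{f}^{-1}(V)=\{\infty\}\cup f^{-1}(V)$ has finite complement in $\alpha_a X$ and so is an open neighbourhood of $\infty$, while continuity at points of $X$ is automatic because $X$ is open and discrete in $\alpha_a X$. I expect the only subtle point is step one: one must make sure the cluster-point argument uses only the finite-intersection-property formulation of compactness, which is available in $\mathbf{ZF}$, and does not quietly appeal to $\mathbf{UFT}$ or $\mathbf{AC}$ to extract an ultrafilter extending the cofinite filter on $X$.
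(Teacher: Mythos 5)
Your proof is correct, and it is a genuinely different (and more self-contained) route than the one the paper intends. The paper states this as an unproved corollary of Theorem 2.27, so the intended argument is: given $f:X\to K$, embed $X$ into $\alpha_a X\times K$ by $x\mapsto\langle x,f(x)\rangle$, take the closure $rX$ (compact in $\mathbf{ZF}$ since finite products of compact spaces are compact), invoke Theorem 2.27 to conclude $rX\thickapprox\alpha_a X$, and recover the extension of $f$ as the composite of the inverse equivalence with the projection onto $K$. You instead verify the universal property of Definition 1.3 directly: a cluster point $y^{\ast}$ of the family $\{\text{cl}_K f(X\setminus F):F\ \text{finite}\}$ exists by the finite-intersection-property form of compactness (which is indeed equivalent to the open-cover form in $\mathbf{ZF}$ by complementation alone, as you note), and amorphousness upgrades "$f^{-1}(V)$ is infinite" to "$f^{-1}(V)$ is cofinite", which is exactly continuity of $\tilde f$ at $\infty$. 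Both arguments are choice-free; yours avoids the product construction and makes the single use of amorphousness completely explicit, while the paper's reuses machinery it has already built. One small remark: your appeal to Theorem 2.27 for "uniqueness" is unnecessary, since any two \v Cech-Stone compactifications are equivalent by the remark following Definition 1.3, so verifying the universal property for the Hausdorff compactification $\alpha_a X$ already finishes the proof.
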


\begin{cor}
$\mathbf{[ZF]}$ The following conditions are equivalent:

\begin{enumerate}
\item[(i)] there do not exist amorphous sets;

\item[(ii)] every infinite discrete space has a Hausdorff compactification
whose remainder is not a singleton.
\end{enumerate}
\end{cor}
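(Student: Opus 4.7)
The plan is to deduce this corollary almost immediately from Theorem 2.27, which characterizes amorphousness of an infinite discrete space $X$ via the condition that every Hausdorff compactification of $X$ is equivalent to the Alexandroff compactification $\alpha_a X$. The equivalence (i)$\Leftrightarrow$(ii) will be established by proving each direction by contraposition.

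For the implication (i)$\Rightarrow$(ii), I would argue contrapositively: assume (ii) fails, so there exists an infinite discrete space $X$ every Hausdorff compactification of which has a singleton remainder. Since $X$ is locally compact and Hausdorff, a Hausdorff compactification with one-point remainder must be equivalent to $\alpha_a X$. Hence every Hausdorff compactification of $X$ is equivalent with $\alpha_a X$. By Theorem 2.27, this forces $X$ to be amorphous, so an amorphous set exists, contradicting (i).

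For the implication (ii)$\Rightarrow$(i), again by contraposition, assume that an amorphous set $X$ exists and equip it with the discrete topology. Then $X$ is an infinite discrete space, so by Theorem 2.27 every Hausdorff compactification of $X$ is equivalent with $\alpha_a X$, whose remainder is a singleton. Thus (ii) fails.

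There is no real obstacle here beyond a single minor point worth stating cleanly: the fact that any one-point Hausdorff compactification of a locally compact Hausdorff space is (up to equivalence) the Alexandroff compactification, which is provable in $\mathbf{ZF}$ and is implicit already in the proof of Theorem 2.27 (where the ``if'' direction of that theorem makes precisely this identification). With this remark, both implications are one-line applications of Theorem 2.27.
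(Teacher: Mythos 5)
Your argument is correct and takes essentially the same route as the paper: the corollary is stated there without proof as an immediate consequence of Theorem 2.27, which is exactly how you derive it. The one auxiliary fact you flag---that a Hausdorff compactification with one-point remainder of a locally compact non-compact Hausdorff space is the Alexandroff compactification---is already asserted explicitly in the paper just before Definition 2.8, so nothing is missing.
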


\begin{thm}
$\mathbf{[ZF]}$ Let $n\in\omega\setminus \{0\}$ and let $X$ be an infinite
discrete space. Then $X$ has an $n$-point Hausdorff compactification and
does not have any $(n+1)$-point Hausdorff compactification if and only if $%
X $ is a disjoint union of $n$ amorphous sets.
\end{thm}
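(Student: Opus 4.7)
The plan is to show both implications by building (or extracting) partitions of $X$ and using the Alexandroff construction of Definition 2.9 together with Theorem 2.26. The central observation is that a discrete space $X$ admits a $k$-point Hausdorff compactification if and only if $X$ can be partitioned into $k$ pairwise disjoint infinite subsets. The ``if'' direction of this auxiliary fact is clear: given a partition $X=B_1\cup\cdots\cup B_k$ into infinite pieces, the topological sum of the Alexandroff compactifications $\alpha_a B_j$ (whose existence is immediate for any infinite discrete space) is a $k$-point Hausdorff compactification of $X$. For the ``only if'' direction, if $\alpha X$ is a $k$-point compactification with remainder $\{y_1,\dots,y_k\}$, choose pairwise disjoint open neighbourhoods $V_j$ of the $y_j$; then $K=\alpha X\setminus\bigcup_j V_j$ is a closed subset of a compact space contained in the discrete $X$, hence finite, while each $V_j\cap X$ is infinite because $y_j$ is an accumulation point of $X$. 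Absorbing $K$ into $V_1\cap X$ yields the desired partition of $X$ into $k$ infinite pieces.

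For the implication ($\Leftarrow$) of Theorem 2.30, let $X=A_1\cup\cdots\cup A_n$ be a disjoint union of amorphous sets. The $n$-point compactification exists by the auxiliary fact above. Suppose, for contradiction, $X$ admitted an $(n+1)$-point compactification. Then $X$ would split into $n+1$ pairwise disjoint infinite subsets $Y_1,\dots,Y_{n+1}$. For each $i\leq n$, since $A_i$ is amorphous, each $Y_j\cap A_i$ is finite or cofinite in $A_i$, and at most one $j$ can give a cofinite trace (two disjoint cofinite subsets of an infinite set are impossible). Since $A_i$ is infinite, exactly one such $j=\phi(i)$ exists. Since every $Y_j$ is infinite, at least one $A_i$ must have $Y_j\cap A_i$ infinite, hence cofinite; thus $\phi:\{1,\dots,n\}\to\{1,\dots,n+1\}$ is surjective, which is impossible by pigeonhole.

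For the converse ($\Rightarrow$), suppose $X$ has an $n$-point compactification and no $(n+1)$-point one. By the auxiliary fact, we obtain a partition $X=B_1\cup\cdots\cup B_n$ into infinite subsets. I claim every $B_i$ is amorphous. Indeed, if some $B_i$ had an infinite subset $C$ with $B_i\setminus C$ also infinite, then replacing $B_i$ by the two pieces $C$ and $B_i\setminus C$ would yield a partition of $X$ into $n+1$ infinite subsets, which would produce an $(n+1)$-point Hausdorff compactification via the topological-sum construction — contradicting the hypothesis.

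The most delicate point is the extraction step in the auxiliary fact — one must verify the compactness of the ``leftover'' set $K$ inside the discrete space $X$ to conclude $K$ is finite, and then be careful to absorb $K$ without breaking the ``infinite'' property of the pieces. Everything else is combinatorics on amorphous sets and an application of the Alexandroff construction (Definition 2.9), and both appeal only to $\mathbf{ZF}$ since Alexandroff one-point compactifications and finite topological sums do not require any choice.
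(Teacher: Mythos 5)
Your proof is correct, and your necessity half follows essentially the paper's route: from an $n$-point compactification one extracts a partition of $X$ into $n$ infinite pieces modulo a finite compact leftover, and splitting a non-amorphous piece manufactures an $(n+1)$-point compactification (you build it explicitly as a topological sum of Alexandroff one-point compactifications, where the paper cites Magill's theorem, i.e.\ Theorem 6.8 of Chandler's book; both are choice-free). The sufficiency half, however, is genuinely different. You argue combinatorially: an $(n+1)$-point compactification would force a partition of $X$ into $n+1$ infinite sets; each amorphous $A_i$ meets exactly one block in a cofinite (equivalently, infinite) set, so the induced map $\{1,\dots,n\}\to\{1,\dots,n+1\}$ would be surjective, which the pigeonhole principle forbids. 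The paper instead takes an arbitrary Hausdorff compactification $\gamma X$, forms the supremum $rX=\text{cl}_{\alpha X\times\gamma X}\, r(X)$ of $\alpha X$ and $\gamma X$, and uses amorphousness to show that the canonical map $h:rX\to\alpha X$ is injective over the remainder, concluding $rX\thickapprox\alpha X$ and hence $\gamma X\leq\alpha X$. Your argument is more elementary and self-contained, but it only excludes compactifications whose remainder has exactly $n+1$ points (which is all the stated theorem requires); the paper's heavier argument establishes the stronger fact that \emph{every} Hausdorff compactification of $X$ has at most $n$ remainder points, i.e.\ that the $n$-point compactification is the largest one --- and that stronger conclusion is precisely what is reused in the proof of Theorem 2.31 to identify it with $\beta X$.
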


\begin{proof} \emph{Necessity}. First, assume that $X$ has an $n$-point compactification. Then there exists a collection $\{V_i: i\in n\}$ of infinite pairwise disjoint subsets of $X$ such that the set $X\setminus\bigcup_{i\in n} V_i$ is compact. Suppose that there exists $i_0\in n$ such that the set $V_{i_0}$ is not amorphous. Then  there exists an infinite subset $U$ of $V_{i_0}$ such that the set $V_{i_0}\setminus U$  is also  infinite. Since Theorem 6.8 of \cite{Ch} (cf. Theorem 2.1 of \cite{M}) is provable in $\mathbf{ZF}$, we can apply it to showing that $X$ has an $(n+1)$-point Hausdorff compactification. Therefore, if $X$ does not have any $(n+1)$-point Hausdorff compactification, the sets $V_i$  must be amorphous for each $i\in n$.

\emph{Sufficiency}.  Now, assume that $\{X_i: i\in n\}$ is a collection of pairwise disjoint amorphous subsets of $X$ such that $X=\bigcup_{i\in n}X_i$. Then $X$ has an $n$-point Hausdorff compactification $\alpha X$ such that, for each pair $i,j\in n$, the sets $X_i$ and $X_j$ have disjoint closures in $\alpha X$ whenever $i\neq j$.  Let $\gamma X$ be an arbitrary Hausdorff compactification of $X$.  Put $Y=\alpha X\times\gamma X$ and define a mapping $r:X\to Y$ by: $r(x)=\langle\alpha(x), \gamma(x)\rangle$ for each $x\in X$. Then $\langle rX, r\rangle$, where $rX=\text{cl}_Yr(X)$, is a Hausdorff compactification of $X$ such that $\alpha X\leq rX$ and $\gamma X\leq rX$. Let $h:rX\to \alpha X$ be such that $h\circ r=\alpha$. Suppose that there exists $z\in\alpha X\setminus X$ such that $h^{-1}(\{ z\})$ is not a singleton. Then  there exists a collection $\{W_j: j\in n+1\}$ of pairwise disjoint infinite subsets of $X$ such that, for each $j\in n+1$, there exists $i\in n$ such that $W_j\subseteq X_i$. There must exist $i_1\in n$ and a pair $j,k$ of distinct numbers from $n+1$ such that $W_j\subseteq X_{i_1}$ and $W_k\subseteq X_{i_1}$. This is impossible because $X_{i_1}$ is amorphous.  The contradiction obtained shows that the compactifications $\alpha X$ and $rX$ are equivalent. Since $\gamma X\leq rX$ and $rX\setminus r(X)$ consists of exactly $n$ points, we have that $\gamma X\setminus \gamma(X)$ consists of at most $n$ points. This completes the proof.
\end{proof}

\begin{thm}
$\mathbf{[ZF]}$ Let $n\in\omega\setminus\{0\}$ and let $X$ be a discrete
space such that $X$ is a union of $n$ pairwise disjoint amorphous sets. Then
the \v Cech-Stone compactification $\beta X$ of $X$ is the unique (up to the
equivalence) $n$-point Hausdorff compactification of $X$.
\end{thm}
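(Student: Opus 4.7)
The plan is to show that the specific $n$-point Hausdorff compactification $\alpha X$ produced by Theorem 2.30 satisfies the universal property of Definition 1.3, so that $\alpha X$ is a \v Cech--Stone compactification of $X$; uniqueness among $n$-point Hausdorff compactifications will then follow from the maximality of $\alpha X$ already established in the proof of Theorem 2.30. More precisely, the sufficiency part of that proof supplies an $n$-point Hausdorff compactification $\alpha X = X \cup \{y_i : i \in n\}$ with $\text{cl}_{\alpha X} X_i = X_i \cup \{y_i\}$ and with these closures pairwise disjoint, and moreover shows that every Hausdorff compactification $\gamma X$ of $X$ satisfies $\gamma X \leq \alpha X$.

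To verify the \v Cech--Stone universal property for $\alpha X$, fix a continuous map $f : X \to K$ into a compact Hausdorff space $K$. For each $i \in n$, the family $\{\text{cl}_K f(X_i \setminus F) : F \subseteq X_i \text{ finite}\}$ of closed subsets of $K$ has the finite intersection property because $X_i$ is infinite, so the FIP characterisation of compactness, which requires no form of choice, yields a nonempty intersection $I_i$. For any $y \in I_i$ and any open neighbourhood $U$ of $y$ in $K$, the set $f^{-1}(U) \cap X_i$ is infinite and hence cofinite in $X_i$ by amorphousness. Two distinct points of $I_i$, separated by disjoint open neighbourhoods in the Hausdorff space $K$, would then produce two disjoint cofinite subsets of the infinite set $X_i$, which is impossible; therefore $I_i = \{y^i\}$ is a singleton. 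Putting $\tilde f\vert_X = f$ and $\tilde f(y_i) = y^i$ defines an extension $\tilde f : \alpha X \to K$, and continuity at each $y_i$ follows from the observation that, for an open $U \ni y^i$, the basic neighbourhood $\{y_i\} \cup (X_i \setminus F)$ of $y_i$ with $F := X_i \setminus f^{-1}(U)$ finite is mapped into $U$. Thus $\alpha X$ is a \v Cech--Stone compactification of $X$, and in particular an $n$-point one.

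For uniqueness, let $\gamma X$ be any $n$-point Hausdorff compactification of $X$. By the first paragraph there is a continuous surjection $h : \alpha X \to \gamma X$ with $h\vert_X = \text{id}_X$. A standard $\mathbf{ZF}$-argument, using that $X$ is locally compact, dense in the Hausdorff space $\gamma X$, and that compact subsets are closed, shows that $X$ is open in $\gamma X$, so $\{x\}$ is open in $\gamma X$ for every $x \in X$. If $h(y_i) \in X$ for some $i$, then $h^{-1}(\{h(y_i)\})$ would be an open neighbourhood of $y_i$ containing cofinitely many points of $X_i$, each fixed by $h$, forcing infinitely many distinct points of $X_i$ to coincide with the single element $h(y_i)$ --- absurd. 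Therefore $h$ maps $\alpha X \setminus X$ into $\gamma X \setminus X$, and since both sides have exactly $n$ points this restriction is a bijection. Then $h$ is a continuous bijection between compact Hausdorff spaces, hence a homeomorphism, so $\gamma X \approx \alpha X$.

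The principal technical obstacle lies in the second paragraph: in $\mathbf{ZF}$ one must produce a \emph{canonical} value $y^i$ for $\tilde f(y_i)$ without invoking any form of choice. Existence of a cluster point is automatic from the FIP characterisation of compactness, but the uniqueness of such a point is precisely where amorphousness of $X_i$ is indispensable --- without it, the image $f(X_i)$ could accumulate at several points of $K$ at once and no distinguished value for $\tilde f(y_i)$ would present itself in $\mathbf{ZF}$.
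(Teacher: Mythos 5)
Your proof is correct, but it takes a genuinely different and more self-contained route than the paper's. The paper's own proof of this theorem is very short: it recalls from the proof of Theorem 2.30 that the $n$-point compactification $\alpha X$ constructed there dominates every Hausdorff compactification of $X$, concludes $\beta X\thickapprox\alpha X$ directly from that maximality (the step from maximality to the universal property of Definition 1.3 — via the diagonal map into $\alpha X\times K$ — is left implicit), and then settles uniqueness by citing Lemma 6.12 of Chandler's book together with Theorem 2.30. You instead verify the \v Cech--Stone universal property by hand: for $f:X\to K$ you take, for each $i$, the cluster set $I_i$ of $f$ along the cofinite filter on $X_i$, use amorphousness to show $I_i$ is a singleton, and read off both the value $\tilde f(y_i)$ and its continuity; and you prove uniqueness by showing the comparison map $h:\alpha X\to\gamma X$ carries remainder to remainder and is therefore a continuous bijection of compact Hausdorff spaces, hence a homeomorphism. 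What your version buys is that both compressed steps of the paper (the maximality-implies-$\beta X$ step and the content of Chandler's Lemma 6.12) are made explicit and checked to be choice-free; what it costs is length. One small correction to your closing remark: the issue with a non-singleton $I_i$ would not be one of choice — there are only $n$ points $y_i$, and finite choice is available in $\mathbf{ZF}$ — but of continuity: if $f(X_i)$ accumulated at two points of $K$, no assignment of $\tilde f(y_i)$ would be continuous, and that is where amorphousness is genuinely used.
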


\begin{proof} By Theorem 2.30, $X$ has an $n$-point Hausdorff compactification $\alpha X$. We have shown in the proof to Theorem 2.30 that if $\gamma X$ is a Hausdorff compactification of $X$, then $\gamma X\leq\alpha X$.  This implies that $\beta X\thickapprox \alpha X$. Lemma 6.12 of \cite{Ch} is provable in $\mathbf{ZF}$ and we infer from it and from Theorem 2.30 that all $n$-point Hausdorff compactifications of $X$ are equivalent.
\end{proof}

One of the most important theorems on products of \v Cech-Stone
compactifications is Glicksberg's theorem of $\mathbf{ZFC}$ which asserts
that, for infinite Tychonoff spaces $X$ and $Y$, the Cartesian product $%
\beta X\times \beta Y$ is the \v Cech-Stone compactification of $X\times Y$
if and only if $X\times Y$ is pseudocompact (see \cite{G} and Problem
3.12.21 (c) of \cite{En}). We are going to prove that Glicksberg's theorem
fails in every model of $\mathbf{ZF}$ in which there is an amorphous set. A
well-known fact of $\mathbf{ZFC}$ is that if $X$ is a compact Hausdorff
space, while $Y$ is a pseudocompact Tychonoff space, then the product $%
X\times Y$ is pseudocompact (see Corollary 3.10.27 of \cite{En}).
Unfortunately, the proof to it in \cite{En} is not a proof in $\mathbf{ZF}$.
This is why we show a proof in $\mathbf{ZF}$ to the following helpful lemma:

\begin{lem}
$\mathbf{[ZF]}$ Suppose that $X,Y$ are non-empty  topological spaces such
that $X$ is compact and $Y$ is pseudocompact. Then $X\times Y$ is
pseudocompact.
\end{lem}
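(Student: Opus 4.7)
The plan is to fix an arbitrary continuous function $f: X \times Y \to \mathbb{R}$ and to produce a continuous, bounded auxiliary function on $Y$ that dominates $|f|$. For each $y \in Y$, the section $x \mapsto |f(x, y)|$ is continuous on the compact space $X$, and its image, being a compact subset of $\mathbb{R}$, is bounded and has a maximum in \textbf{ZF}; hence
$$G(y) = \max\{ |f(x, y)| : x \in X \}$$
is a well-defined real number for every $y \in Y$. If I can show that $G$ is continuous, then pseudocompactness of $Y$ produces $M \in \mathbb{R}$ with $G(y) \leq M$ for all $y \in Y$, so $|f(x, y)| \leq G(y) \leq M$ throughout $X \times Y$, $f$ is bounded, and consequently $X \times Y$ is pseudocompact.

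To prove continuity of $G$ at a fixed $y_0 \in Y$, I would use a tube-lemma argument. Given $\varepsilon > 0$, consider
$$W = \{ (x, y) \in X \times Y : |f(x, y) - f(x, y_0)| < \varepsilon \}.$$
This is open in $X \times Y$ because the map $(x, y) \mapsto f(x, y) - f(x, y_0)$ is a difference of two continuous functions on $X \times Y$; the second summand factors through the projection $X \times Y \to X$ composed with the continuous function $x \mapsto f(x, y_0)$. Obviously $X \times \{y_0\} \subseteq W$, so the tube lemma will produce an open $V \ni y_0$ in $Y$ with $X \times V \subseteq W$. Then for each $y \in V$ and each $x \in X$ one has $|f(x, y)| \leq |f(x, y_0)| + \varepsilon \leq G(y_0) + \varepsilon$, which gives $G(y) \leq G(y_0) + \varepsilon$; reversing the roles of $y$ and $y_0$ yields $|G(y) - G(y_0)| \leq \varepsilon$, and so $G \in C^{\ast}(Y)$.

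The main point that needs careful justification in \textbf{ZF} is the tube lemma itself, since the classical proof looks as if it required a choice function picking a basic rectangle at every point of $X$. Here the argument I intend to use is: the family $\mathcal{U}$ of \emph{all} open sets $U \subseteq X$ for which there exists some open $V \subseteq Y$ with $y_0 \in V$ and $U \times V \subseteq W$ is, by the definition of the product topology, an open cover of $X$; compactness of $X$ in \textbf{ZF} yields a finite subcover $U_1, \ldots, U_n$ of $\mathcal{U}$, and only then do I select corresponding $V_1, \ldots, V_n$—a choice from finitely many non-empty sets, which is admissible in \textbf{ZF}. The intersection $V_1 \cap \cdots \cap V_n$ is the desired neighbourhood of $y_0$. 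Once the tube lemma is in place, the remaining ingredients (maxima of continuous real-valued functions on compact spaces, continuity of coordinate maps, pseudocompactness of $Y$) are all elementary and choice-free, so I do not anticipate any further obstacle.
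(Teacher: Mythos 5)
Your proof is correct and follows essentially the same route as the paper: both define the fibrewise supremum $G(y)=\sup_{x\in X}|f(x,y)|$, establish its continuity via the choice-free device of covering $X$ by \emph{all} open sets admitting a witness neighbourhood of $y_0$ and only then making finitely many selections, and conclude from the pseudocompactness of $Y$. The only difference is presentational: you package that covering argument as a \textbf{ZF} tube lemma applied to $\{(x,y):|f(x,y)-f(x,y_0)|<\varepsilon\}$ and get the two-sided estimate at once, whereas the paper bounds $F(y)$ above and below separately.
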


\begin{proof} Let $g: X\times Y\to \mathbb{R}$ be a continuous function. Put $f=\vert g\vert$ and define a function $F:Y\to\mathbb{R}$ by:
$$F(y)=\sup\{ f(x,y): x\in X\}.$$
To check that $F$ is continuous, consider any point $y_0\in Y$ and real numbers $a,b$ such that $a<F(y_0)<b$. Let $\mc{U}$ be a collection of all non-empty open sets in $X$ such that if $U\in\mc{U}$, then there exists an open in $Y$ set $G$ such that $y_0\in G$ and $f(U\times G)\subseteq (-\infty, b)$. It follows from the continuity of $f$ that $\mc{U}$ is an open cover of $X$. Since $X$ is compact, there exists a finite subcover $\mc{U}_0$ of $\mc{U}$. For each $U\in\mc{U}_0$, we can choose an open neighbourhood $G(U)$ of $y_0$ such that $f(U\times G(U))\subseteq (-\infty, b)$. Since $a<F(y_0)$, there exists $x_0\in X$ such that $a< f(x_0, y_0)$. By the continuity of $f$, there exists an open neighbourhood $V_0$ of $y_0$ such that $f(\{x_0\}\times V_0)\subseteq(a, +\infty)$. Let $V=V_0\cap\bigcap\{G(U): U\in\mc{U}_0\}$. There exists $U_0\in\mc{U}_0$ such that $x_0\in U_0$. If $y\in V$, then $a< f(x_0, y)\leq F(y)<b$; hence, $F$ is continuous at $y_0$. Since $Y$ is pseudocompact, the function $F$ is bounded. This implies that $f$ is bounded, so $g$ is also bounded.
\end{proof}

\begin{thm}
$\mathbf{[ZF]}$ For every amorphous discrete space $X$, the spaces $X$ and $%
\beta X\times X$ are both pseudocompact, while $\beta X\times\beta X$ is not
the \v Cech-Stone compactification of $\beta X\times X$.
\end{thm}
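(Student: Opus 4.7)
The plan is to invoke Corollary 2.28 at the outset, which tells us that $\beta X$ equals the Alexandroff compactification $\alpha_a X = X\cup\{\infty\}$; in particular $\beta X$ is compact Hausdorff, and by Proposition 2.12 it is Tychonoff. With this identification in hand, the three claims correspond to three separate arguments.

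For the pseudocompactness of $X$, I would argue that any continuous $f\colon X\to\mathbb{R}$ has finite image. Because $X$ is discrete, the image is some subset of $\mathbb{R}$ that is in bijection with a quotient of $X$; but any image of an amorphous set is either finite or amorphous, and an amorphous set cannot be linearly ordered (the usual partition of a linearly ordered infinite set into an initial and a final segment would split it into two infinite pieces). Hence the image is finite and $f$ is bounded. For $\beta X\times X$, pseudocompactness is then immediate from Lemma 2.32 applied to the compact factor $\beta X$ and the pseudocompact factor $X$.

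For the third and main assertion, I intend to exhibit a concrete bounded continuous $f\colon\beta X\times X\to [0,1]$ that admits no continuous extension to $\beta X\times\beta X$. Let
\[
f(x,y)=
\begin{cases}
1 & \text{if } x=y\in X,\\
0 & \text{otherwise.}
\end{cases}
\]
Continuity at isolated points $(x,y)\in X\times X$ is immediate. At a point $(\infty,y_0)$, the open neighbourhood $(\beta X\setminus\{y_0\})\times\{y_0\}$ (well defined because $\{y_0\}$ is closed in the Hausdorff space $\beta X$ and open in the discrete $X$) witnesses that $f\equiv 0$ nearby, so $f$ is continuous there as well.

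To rule out a continuous extension $\tilde f\colon\beta X\times\beta X\to[0,1]$, I would look at the value $c=\tilde f(\infty,\infty)$ and any basic neighbourhood $U=(\beta X\setminus K_1)\times(\beta X\setminus K_2)$ of $(\infty,\infty)$, where $K_1,K_2\subseteq X$ are finite. Amorphousness guarantees that $X\setminus(K_1\cup K_2)$ is infinite; picking $y$ in this set gives $(y,y)\in U$ with $\tilde f(y,y)=1$, while for any $y'\in X\setminus K_2$ we get $(\infty,y')\in U$ with $\tilde f(\infty,y')=0$. Thus $\tilde f(U)\supseteq\{0,1\}$ in every basic neighbourhood, contradicting continuity. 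Consequently $\beta X\times\beta X$ fails the Čech–Stone universal property for the compact Hausdorff target $[0,1]$, so it cannot be the Čech–Stone compactification of $\beta X\times X$.

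The only point that requires genuine care is the pseudocompactness of $X$: the fact that amorphous sets admit no injection (or indeed no non-constant-fibred map) into $\mathbb{R}$ is a ZF argument that must be spelled out, whereas the topological construction of the counterexample $f$ and the density/separation calculation at $(\infty,\infty)$ are routine once $\beta X$ is identified with $\alpha_a X$.
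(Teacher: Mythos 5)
Your proof is correct and follows essentially the same route as the paper's: identify $\beta X$ with $\alpha_a X$ via Corollary 2.28, get pseudocompactness of $X$ from the fact that an amorphous set admits no linear order (so $f(X)$ is finite), apply Lemma 2.32 for $\beta X\times X$, and use the characteristic function of the diagonal $\{\langle x,y\rangle\in\beta X\times X: x=y\}$, whose continuous extension would have to take both values $0$ and $1$ in every neighbourhood of $\langle\infty,\infty\rangle$ --- exactly the paper's observation that $f^{-1}(0)$ and $f^{-1}(1)$ have intersecting closures. The one caveat is your parenthetical: it is not literally true that every infinite linearly ordered set splits into an infinite initial and an infinite final segment (e.g.\ $\omega$), so the standard ZF argument must first rule out $\omega$-like and $\omega^{\ast}$-like ends by noting that an amorphous set has no countably infinite subset; the fact itself (no amorphous linearly ordered sets) is correct and is cited by the paper equally without proof.
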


\begin{proof} Let $X$ be an amorphous discrete space. Consider any $f: X\to \mathbb{R}$. Then $f(X)$ is either finite or amorphous. Since there do not exist amorphous linearly ordered sets,  the  set $f(X)$ is finite. This implies that $X$ is pseudocompact.  By Corollary 2.28, $\beta X$ is a one-point Hausdorff compactification of $X$. In view of Lemma 2.32, the space $\beta X\times X$ is pseudocompact. Now, let $A=\{\langle x,y\rangle\in\beta X\times X: x=y\}$. Then $A$ is clopen in $\beta X\times X$. Let  $f:\beta X\times X\to\{0, 1\}$ be defined by $f(z)=0$ if $z\in A$, while $f(z)=1$ if $z\in(\beta X\times X)\setminus A$. The function $f$ is continuous but it is not continuously extendable over $\beta X\times \beta X$ since the sets $f^{-1}(0)$ and $f^{-1}(1)$ do not have disjoint closures in $\beta X\times \beta X$.  
\end{proof}

\begin{cor}
The following statement is independent of $\mathbf{ZF}$: there exist a
compact Tychonoff space $K$ and a pseudocompact Tychonoff space $X$ such
that $X$ has its \v Cech-Stone compactification, while $K\times\beta X$ is
not the \v Cech-Stone compactification of $K\times X$.
\end{cor}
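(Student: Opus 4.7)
The plan is to establish independence by exhibiting, on one hand, a model of $\mathbf{ZF}$ in which the statement holds and, on the other hand, a model of $\mathbf{ZF}$ in which its negation holds. The positive direction will be read off from Theorem 2.33 together with Corollary 2.28, while the negative direction will be obtained from any model of $\mathbf{ZFC}$ via Glicksberg's classical theorem.

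For the consistency of the statement, I would work in a model $\mc{M}$ of $\mathbf{ZF}$ in which an amorphous set exists (such as $\mc{M}37$ or $\mathbf{N}1$ of \cite{HR}) and fix an amorphous discrete space $X$ in $\mc{M}$. By Corollary 2.28, the \v Cech-Stone compactification of $X$ coincides with its Alexandroff compactification $\alpha_a X$; in particular $\beta X$ exists. Put $K=\beta X$. Then $K$ is compact Hausdorff, and by Proposition 2.12 (combined with Proposition 2.5) $K$ is completely regular, hence Tychonoff. The proof of Theorem 2.33 shows that $X$ is pseudocompact, so the pair $(K,X)$ satisfies all the hypotheses of the existential statement; Theorem 2.33 itself then yields that $\beta X\times\beta X=K\times\beta X$ is not the \v Cech-Stone compactification of $K\times X=\beta X\times X$. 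Hence the statement holds in $\mc{M}$.

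For the consistency of the negation, I would move to any model of $\mathbf{ZFC}$ and argue that no counterexample can arise there. Let $K$ be compact Tychonoff and $X$ pseudocompact Tychonoff with $\beta X$ in hand. Lemma 2.32 (proved in $\mathbf{ZF}$, hence valid in $\mathbf{ZFC}$) implies that $K\times X$ is pseudocompact. Glicksberg's theorem, available in $\mathbf{ZFC}$, then gives $\beta(K\times X)=\beta K\times\beta X=K\times\beta X$, since $\beta K=K$ for a compact Hausdorff space. So no pair $(K,X)$ witnesses the existential statement, and its negation holds in every model of $\mathbf{ZFC}$. Combined with the previous paragraph, this establishes independence.

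The only subtle point is verifying in the consistency direction that $K=\beta X$ is Tychonoff in a choiceless setting; but this is exactly what Propositions 2.12 and 2.5 were set up to supply, so no genuine obstacle remains once those results are in place.
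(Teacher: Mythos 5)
Your proposal is correct and follows essentially the argument the paper intends: the positive direction is read off from Theorem 2.33 (with $K=\beta X$ for an amorphous discrete $X$, Tychonoff by Propositions 2.12 and 2.5), and the negative direction from Glicksberg's theorem together with Lemma 2.32 in any model of $\mathbf{ZFC}$. No gaps.
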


\section{Compactifications generated by sets of functions}

As we have already informed in section 1, it holds true in every model of $%
\mathbf{ZF+UFT}$ that if $X$ is a Tychonoff space and $F\in\mathcal{E}(X)$,
then the space $e_F X$ is compact; however, $e_{F}X$ can fail to be compact
in a model of $\mathbf{ZF+\neg UFT}$. Therefore, it might be useful to find
necessary and sufficient conditions for $F\in\mathcal{E}(X)$ to have the
property that the space $e_{F}X$ is compact in \textbf{ZF}.

For a compactification $\alpha X$ of $X$ and for a function $f\in C_{\alpha}(X)$,  the unique continuous extension of $f$ over $\alpha X$ is usually denoted by $f^{\alpha}$. For $F\subseteq C_{\alpha}(X)$, we put $F^{\alpha}=\{ f^{\alpha}: f\in F\}$.

\begin{thm}
$\mathbf{[ZF]}$ Suppose that $X$ is a Tychonoff space and that $F\in\mathcal{%
E}(X)$. Then the following conditions are equivalent:

\begin{enumerate}
\item[(i)] $e_{F}X$ is compact;

\item[(ii)] there exists a (not necessarily Hausdorff) compactification $%
\alpha X$ of $X$ such that $F\subseteq C_{\alpha}(X)$.
\end{enumerate}
\end{thm}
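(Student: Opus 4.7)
For the implication (i)$\Rightarrow$(ii), my plan is to take $\alpha X=e_{F}X$ itself. Since $e_F:X\to e_FX$ is by definition a homeomorphic embedding with dense image into a compact space, the pair $\langle e_FX, e_F\rangle$ is a (Hausdorff) compactification of $X$. It remains to verify that $F\subseteq C_{\alpha}(X)$; for this, I will observe that for each $f\in F$ the $f$-th projection $\pi_f:\mathbb{R}^F\to\mathbb{R}$ is continuous, hence its restriction $\pi_f\!\restriction\! e_FX$ is a continuous bounded real-valued function on $e_FX$ whose composition with $e_F$ sends $x$ to $e_F(x)(f)=f(x)$. Under the identification of $X$ with $e_F(X)$, this is precisely a continuous extension of $f$ over $e_FX$, so $f\in C_{\alpha}(X)$.

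For the harder direction (ii)$\Rightarrow$(i), suppose $\alpha X$ is a compactification of $X$ with $F\subseteq C_{\alpha}(X)$; for each $f\in F$ let $f^{\alpha}:\alpha X\to\mathbb{R}$ denote the (unique) continuous extension. I would define the evaluation map $e_{F^{\alpha}}:\alpha X\to\mathbb{R}^{F}$ by $e_{F^{\alpha}}(z)(f)=f^{\alpha}(z)$. Since each $f^{\alpha}$ is continuous and $\mathbb{R}^F$ carries the product topology, $e_{F^{\alpha}}$ is continuous; composing with the embedding of $X$ into $\alpha X$ recovers $e_F$, so $e_{F^{\alpha}}(X)=e_F(X)$. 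The image $e_{F^{\alpha}}(\alpha X)$ is then a continuous image of the compact space $\alpha X$, hence compact.

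The key closing step will be to show $e_FX=e_{F^{\alpha}}(\alpha X)$. The inclusion $e_FX\subseteq e_{F^{\alpha}}(\alpha X)$ follows because $e_{F^{\alpha}}(\alpha X)$ is compact inside the Hausdorff space $\mathbb{R}^F$, hence closed, and it contains $e_F(X)$, so it contains $\mathrm{cl}_{\mathbb{R}^F}(e_F(X))=e_FX$. The reverse inclusion uses that $X$ is dense in $\alpha X$ together with continuity of $e_{F^{\alpha}}$: any open set in $\mathbb{R}^F$ that meets $e_{F^{\alpha}}(\alpha X)$ has preimage a non-empty open set in $\alpha X$, which must then meet $X$, so $e_F(X)$ is dense in $e_{F^{\alpha}}(\alpha X)$, giving $e_{F^{\alpha}}(\alpha X)\subseteq \mathrm{cl}_{\mathbb{R}^F}(e_F(X))=e_FX$. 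Hence $e_FX=e_{F^{\alpha}}(\alpha X)$ is compact.

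I do not foresee a real obstacle; the whole argument avoids any use of choice because it proceeds via the single canonical map $e_{F^{\alpha}}$ rather than by selecting data indexed by $F$. The only mild subtlety is that $\alpha X$ in (ii) is not assumed Hausdorff, so I want to make sure every closedness claim is made in the Hausdorff ambient space $\mathbb{R}^F$ (where $e_{F^{\alpha}}(\alpha X)$ is automatically closed once compact), and that the density argument is performed via preimages rather than via nets or sequences.
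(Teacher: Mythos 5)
Your proposal is correct and follows essentially the same route as the paper: the map you call $e_{F^{\alpha}}$ is exactly the map $h(t)(f)=f^{\alpha}(t)$ used there, and the identification $e_{F}X=e_{F^{\alpha}}(\alpha X)$ is established by the same two observations (compact image in the Hausdorff space $\mathbb{R}^{F}$ is closed and contains $e_{F}(X)$; density of $X$ in $\alpha X$ forces the image into $\mathrm{cl}_{\mathbb{R}^{F}}(e_{F}(X))$), the only cosmetic difference being that the paper phrases the density step as a contradiction with a basic open box. Your remarks on avoiding choice (extensions into $\mathbb{R}$ over a dense subspace are unique, so no selection is needed) and on keeping all closedness claims inside $\mathbb{R}^{F}$ are exactly the right precautions.
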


\begin{proof}
If $e_{F}X$ is compact, then $F\subseteq C_{e_F}(X)$, so (i) implies (ii). Assume that (ii) holds. Let $\alpha X$ be a compactification of $X$ such that $F\subseteq C_{\alpha}(X)$. We define a mapping $h:\alpha X\to\mathbb{R}^F$ by $h(t)(f)=f^{\alpha}(t)$ for all $t\in\alpha X$ and $f\in F$. Since $e_F(X)\subseteq h(\alpha X)$, it follows from the compactness of $\alpha X$ that $e_{F}X\subseteq h(\alpha X)$. We shall show that $e_{F}X=h(\alpha X)$. To this aim, suppose that $y\in h(\alpha X)\setminus e_{F}X$. Let $t\in\alpha X$ be such that $h(t)=y$. There exist a non-empty finite set $K\subseteq F$ and a positive real number $\varepsilon$, such that if $V_f=\mathbb{R}$ for $f\in F\setminus K$, while $V_f=(y(f)-\varepsilon, y(f)+\varepsilon)$ for $f\in K$, then $e_{F}(X)\cap\prod_{f\in F}V_f=\emptyset$. Then $t\in\bigcap_{f\in K}(f^{\alpha})^{-1}(V_f)$ and, by the density of $X$ in $\alpha X$, there exists $z\in X\cap \bigcap_{f\in K}(f^{\alpha})^{-1}(V_f)$. Then $h(z)\in e_{F}(X)\cap\prod_{f\in F}V_f$ which is impossible. The contradiction obtained implies that $e_{F} X=h(\alpha X)$. In consequence, $e_{F} X$ is compact.
Hence (ii) implies (i).
\end{proof}

\begin{defi}
A Hausdorff completely regular compactification $\gamma X$ of a Tychonoff space $X$ will be
called a \emph{functional  \v Cech-Stone compactification} of $X$ if $%
C_{\gamma}(X)=C^{\ast}(X)$.
\end{defi}

The following proposition is an immediate consequence of Theorem 2.23:

\begin{prop}
$\mathbf{[ZF]}$ If $\gamma_1 X$ and $\gamma_2 X$ are functional \v
Cech-Stone compactifications of $X$, then $\gamma_1 X$ and $\gamma_2 X$ are
equivalent.
\end{prop}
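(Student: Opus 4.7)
The plan is to apply Theorem 2.23 twice in symmetric fashion, using that the defining property of a functional \v Cech-Stone compactification forces the algebras of extendable functions to coincide with $C^{\ast}(X)$ for both candidates.

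First, I would note that by Definition 3.2, both $\gamma_1 X$ and $\gamma_2 X$ are Hausdorff completely regular compactifications of $X$, and moreover
\[
C_{\gamma_1}(X)=C^{\ast}(X)=C_{\gamma_2}(X).
\]
In particular, $C_{\gamma_1}(X)\subseteq C_{\gamma_2}(X)$ and $C_{\gamma_2}(X)\subseteq C_{\gamma_1}(X)$.

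Next, since both compactifications satisfy the completely regular hypothesis of Theorem 2.23, I would invoke that theorem twice to obtain $\gamma_1 X\leq\gamma_2 X$ and $\gamma_2 X\leq\gamma_1 X$. Finally, as recalled in Section 1, for Hausdorff compactifications the conjunction $\gamma_1 X\leq\gamma_2 X$ and $\gamma_2 X\leq\gamma_1 X$ is equivalent to $\gamma_1 X\thickapprox\gamma_2 X$, which concludes the argument.

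There is essentially no obstacle: everything one needs is already packaged into Theorem 2.23 and the basic fact about mutual dominance of Hausdorff compactifications. The only thing to be careful about is to explicitly invoke the complete regularity clause (which is built into Definition 3.2) so that Theorem 2.23 applies; without that hypothesis, $C_{\alpha}(X)\subseteq C_{\gamma}(X)$ would not suffice to conclude $\alpha X\leq\gamma X$ in \textbf{ZF}, as Corollary 2.22 shows.
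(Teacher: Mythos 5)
Your proof is correct and follows exactly the route the paper intends: the paper states Proposition 3.4 as "an immediate consequence of Theorem 2.23," and your argument—applying Theorem 2.23 in both directions using $C_{\gamma_1}(X)=C^{\ast}(X)=C_{\gamma_2}(X)$ and the complete regularity built into Definition 3.2, then concluding via mutual dominance of Hausdorff compactifications—is precisely that deduction spelled out.
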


\begin{rem}
Suppose that a Tychonoff space $X$ has a functional \v Cech-Stone
compactification. Since all functional \v Cech-Stone compactifications of $X$
are equivalent, let us denote by $\beta^f X$ an arbitrary functional \v
Cech-Stone compactification of $X$.
\end{rem}

\begin{prop}
$\mathbf{[ZF]}$ Let $X$ be a Tychonoff space and let $F=C^{\ast}(X)$. Then $X$  has its functional \v Cech-Stone
compactification if and only if  the space $e_F X$ is compact. Moreover, if $\beta^f X$ exists, then $\beta^f X\thickapprox e_F X$.
\end{prop}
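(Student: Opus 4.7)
The plan is to reduce both directions to Theorem 3.1 combined with Proposition 3.3, treating $e_F X$ itself as the natural candidate for a functional \v Cech-Stone compactification.

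For the forward direction, assume $\beta^f X$ exists. The defining equality $C_{\beta^f}(X)=C^{\ast}(X)=F$ gives in particular $F\subseteq C_{\beta^f}(X)$, so condition (ii) of Theorem 3.1 is met with $\alpha X=\beta^f X$. The implication (ii)$\Rightarrow$(i) of Theorem 3.1 then yields that $e_F X$ is compact, with no further work required.

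For the reverse direction, I would show that once $e_F X$ is compact, it is itself a functional \v Cech-Stone compactification of $X$. Since $\mathbb{R}^F$ is Hausdorff and completely regular as a product of copies of $\mathbb{R}$, and both properties pass to subspaces in $\mathbf{ZF}$, the compactification $e_F X$ is Hausdorff and completely regular. The key equality $C_{e_F}(X)=C^{\ast}(X)$ splits into two easy inclusions: for each $f\in F=C^{\ast}(X)$, the restricted projection $\pi_f\vert_{e_F X}:e_F X\to\mathbb{R}$ continuously extends $f$ (since $\pi_f(e_F(x))=e_F(x)(f)=f(x)$), giving $F\subseteq C_{e_F}(X)$; conversely, any $g\in C_{e_F}(X)$ extends continuously over the compact space $e_F X$, hence is bounded, so $C_{e_F}(X)\subseteq C^{\ast}(X)$. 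Therefore $e_F X$ fulfils Definition 3.2, and in particular $\beta^f X$ exists.

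The equivalence $\beta^f X\thickapprox e_F X$ is then immediate from Proposition 3.3, since both $\beta^f X$ (by hypothesis) and $e_F X$ (by the argument above) are functional \v Cech-Stone compactifications of $X$, and any two such are equivalent. I do not anticipate a genuine obstacle: the whole statement is essentially a repackaging of Theorem 3.1 applied to the distinguished choice $F=C^{\ast}(X)$, together with the uniqueness statement of Proposition 3.3; the only point that needs a line of explicit verification is the inclusion $C^{\ast}(X)\subseteq C_{e_F}(X)$ via coordinate projections.
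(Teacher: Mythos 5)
Your proof is correct and follows essentially the same route as the paper: Theorem 3.1 (with $\alpha X=\beta^f X$) for the forward direction, and the identification $C_{e_F}(X)=C^{\ast}(X)$ together with the uniqueness of functional \v Cech-Stone compactifications (Proposition 3.3, i.e.\ Theorem 2.23) for the converse and the equivalence. You merely spell out details the paper leaves implicit, namely the complete regularity of $e_F X$ as a subspace of $\mathbb{R}^F$ and the two inclusions giving $C_{e_F}(X)=C^{\ast}(X)$.
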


\begin{proof} Suppose that $\beta^{f} X$ exists. It follows from Theorem 3.1 that the space $e_F X$ is compact. If $e_F X$ is compact, then  since $C_{e_F}(X)=C^{\ast}(X)$, we have $\beta^{f} X\thickapprox e_F X$ by Theorem 2.23. 
\end{proof}

It was observed in \cite{PW} that Taimanov's Theorem 3.2.1 of \cite{En} is
valid in $\mathbf{ZF}$ (see Theorem 5.15 of \cite{PW}). By using Taimanov's
theorem and Lemma 2.10, one can easily prove the following proposition:

\begin{prop}
$\mathbf{[ZF]}$ Suppose that a Tychonoff space X has its functional \v{C}%
ech-Stone compactification. If $K$ is a compact Tychonoff space, then every
continuous mapping $f:X\rightarrow K$ is continuously extendable over $\beta
^{f}X$.
\end{prop}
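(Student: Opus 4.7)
The plan is to invoke Taimanov's theorem (which, as noted just before the statement, is available in $\mathbf{ZF}$ via Theorem 5.15 of \cite{PW}). Recall that Taimanov's theorem says: given a dense subspace $X$ of a space $Y$, a compact Hausdorff space $K$, and a continuous map $f:X\to K$, the map $f$ admits a continuous extension $\tilde f:Y\to K$ if and only if, for every pair of disjoint closed sets $A,B\subseteq K$, one has $\text{cl}_Y f^{-1}(A)\cap\text{cl}_Y f^{-1}(B)=\emptyset$. Apply this with $Y=\beta^f X$.

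So fix disjoint closed sets $A,B\subseteq K$. Since $K$ is compact, $A$ and $B$ are both compact Tychonoff subsets of $K$. Lemma 2.10 applied inside $K$ (with $B$ as the compact set and $A$ as the disjoint closed set) yields a continuous $g:K\to[0,1]$ with $A\subseteq g^{-1}(0)$ and $B\subseteq g^{-1}(1)$. The composition $g\circ f$ is then a member of $C^{\ast}(X)$; since by hypothesis $X$ has a functional \v Cech--Stone compactification, we have $C^{\ast}(X)=C_{\beta^f}(X)$, so $g\circ f$ extends to a continuous function $\widetilde{g\circ f}:\beta^f X\to\mathbb{R}$ (and by density of $X$ in $\beta^f X$ its range lies in $[0,1]$).

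Now $f^{-1}(A)\subseteq (g\circ f)^{-1}(0)\subseteq(\widetilde{g\circ f})^{-1}(0)$ and $f^{-1}(B)\subseteq(\widetilde{g\circ f})^{-1}(1)$, hence
\[
\text{cl}_{\beta^f X} f^{-1}(A)\subseteq(\widetilde{g\circ f})^{-1}(0),\qquad \text{cl}_{\beta^f X} f^{-1}(B)\subseteq(\widetilde{g\circ f})^{-1}(1),
\]
and these two zero-sets are disjoint. Thus the Taimanov criterion is satisfied for every such pair $A,B$, and we obtain the desired continuous extension of $f$ over $\beta^f X$.

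The only place where care is needed in $\mathbf{ZF}$ is in producing the separating function $g$ on $K$; this is precisely what Lemma 2.10 delivers without any choice, and once $g$ is in hand, the rest is the definition of $\beta^f X$ plus a formal application of Taimanov's theorem. I do not anticipate a substantial obstacle.
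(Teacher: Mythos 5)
Your argument is correct and is precisely the proof the paper intends: the paper states the proposition immediately after noting that it follows "by using Taimanov's theorem and Lemma 2.10," and your write-up simply fills in those details (separating $A$ and $B$ in the compact Tychonoff space $K$ via Lemma 2.10, composing with $f$, extending over $\beta^f X$ by the defining property $C_{\beta^f}(X)=C^{\ast}(X)$, and verifying the Taimanov criterion). No gaps.
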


\begin{cor}
$\mathbf{[ZF]}$ Let $X$ be a Tychonoff space such that $\beta^{f}X$ exists.
Then, for every completely regular Hausdorff compactification $\alpha X$ of $%
X$, we have $\alpha X\leq \beta^{f}X$.
\end{cor}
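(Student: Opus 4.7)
The plan is to obtain the required continuous map $\beta^{f}X\to\alpha X$ as a direct application of Proposition 3.6, since a completely regular Hausdorff compactification $\alpha X$ is by definition a compact Tychonoff space.

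First I would unpack the definition of $\alpha X\leq\beta^{f}X$: I need a continuous mapping $h:\beta^{f}X\to\alpha X$ satisfying $h\circ\beta^{f}=\alpha$, where $\alpha:X\to\alpha X$ is the canonical embedding associated to the compactification $\alpha X$. The key observation is that, since $\alpha X$ is a Hausdorff completely regular compactification, the target $\alpha X$ is a compact Tychonoff space in the sense used by Proposition 3.6. Viewing $\alpha$ as a continuous mapping $\alpha:X\to\alpha X$ into this compact Tychonoff space, Proposition 3.6 applies (because $\beta^{f}X$ is assumed to exist) and yields a continuous extension $\tilde{\alpha}:\beta^{f}X\to\alpha X$ with $\tilde{\alpha}\circ\beta^{f}=\alpha$. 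Setting $h=\tilde{\alpha}$ gives exactly the relation $\alpha X\leq\beta^{f}X$.

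There is essentially no obstacle here, as Proposition 3.6 has already been established in $\mathbf{ZF}$ (its proof goes through Taimanov's theorem and Lemma 2.10, both available in $\mathbf{ZF}$). The only subtlety to note is that the hypothesis that $\alpha X$ is completely regular is essential for invoking Proposition 3.6, since that proposition requires the codomain to be a compact Tychonoff space; in $\mathbf{ZF}$ a Hausdorff compactification need not be completely regular (as illustrated by the strange compactifications constructed in Section 2), so the hypothesis cannot be dropped. Thus the proof amounts to a single invocation of Proposition 3.6 applied to the embedding $\alpha:X\to\alpha X$.
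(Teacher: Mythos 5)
Your proof is correct and is exactly the intended argument: the paper states this as an immediate corollary of Proposition 3.6 (with no written proof), obtained by applying that proposition to the compact Tychonoff space $K=\alpha X$ and the embedding $\alpha:X\to\alpha X$. Your additional remark on why complete regularity of $\alpha X$ cannot be dropped is accurate and consistent with the paper's discussion of strange compactifications.
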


\begin{thm}
$\mathbf{[ZF]}$ Let $X$ be a non-empty  Tychonoff space such that $\beta X$
exists. Then there exists $\beta ^{f}X$ and $\beta ^{f}X\leq \beta X$.
\end{thm}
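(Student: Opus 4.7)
The plan is to construct $\beta^{f}X$ explicitly as $e_{F}X$ for $F=C^{\ast}(X)$, using the universal property of $\beta X$ only to (a) show each bounded continuous function extends, so that Theorem 3.1 can be applied to force $e_{F}X$ to be compact, and (b) produce the comparison map $\beta X\to\beta^{f}X$.

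First I would verify $C^{\ast}(X)\subseteq C_{\beta}(X)$. If $f\in C^{\ast}(X)$, then $f$ takes values in a closed bounded interval $[a,b]$, which is a compact Hausdorff space; by the defining universal property of $\beta X$ from Definition 1.3, there is a continuous $\tilde{f}:\beta X\to[a,b]$ with $\tilde{f}\circ\beta=f$, so $f\in C_{\beta}(X)$. (The reverse inclusion $C_{\beta}(X)\subseteq C^{\ast}(X)$ also holds because a continuous extension to a compact space has compact, hence bounded, image, but we do not need this direction for the existence step.)

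Next, set $F=C^{\ast}(X)$. Since $X$ is Tychonoff, the evaluation map $e_{F}:X\to\mathbb{R}^{F}$ is a homeomorphic embedding, so $F\in\mathcal{E}(X)$. Because $F\subseteq C_{\beta}(X)$, condition (ii) of Theorem 3.1 is fulfilled with $\alpha X=\beta X$, and hence $e_{F}X$ is compact. Being a closed subspace of the Tychonoff cube $\mathbb{R}^{F}$, the compactification $e_{F}X$ is a completely regular Hausdorff compactification of $X$. To identify it with $\beta^{f}X$ I would check $C_{e_{F}}(X)=C^{\ast}(X)$: each $f\in F$ is clearly continuously extendable over $e_{F}X$ (the restriction of the projection $\pi_{f}:\mathbb{R}^{F}\to\mathbb{R}$ to $e_{F}X$ does the job), so $C^{\ast}(X)=F\subseteq C_{e_{F}}(X)$, while the reverse inclusion $C_{e_{F}}(X)\subseteq C^{\ast}(X)$ is immediate from compactness of $e_{F}X$. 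Thus $e_{F}X$ satisfies Definition 3.2, and by Proposition 3.3 we may take $\beta^{f}X\thickapprox e_{F}X$; in particular $\beta^{f}X$ exists.

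Finally, for the comparison $\beta^{f}X\leq\beta X$, I would apply the universal property in Definition 1.3 once more: since $\beta^{f}X$ is a compact Hausdorff space and the canonical embedding $X\hookrightarrow\beta^{f}X$ is continuous, it extends to a continuous map $\beta X\to\beta^{f}X$ commuting with the embeddings of $X$, which is exactly the definition of $\beta^{f}X\leq\beta X$. No step is a serious obstacle in $\mathbf{ZF}$: the only potentially subtle point is the compactness of $e_{F}X$, but this is provided for free by Theorem 3.1 as soon as one has $F\subseteq C_{\beta}(X)$, and that inclusion is immediate from the universal property that \emph{defines} $\beta X$.
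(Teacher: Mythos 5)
Your proposal is correct and follows essentially the same route as the paper: the paper's proof also obtains the existence of $\beta^{f}X$ from Theorem 3.1 (via $C^{\ast}(X)\subseteq C_{\beta}(X)$, which holds by the universal property of $\beta X$ applied to compact intervals), and derives $\beta^{f}X\leq\beta X$ from Remark 1.4, which is precisely the appeal to the universal property you make in your last paragraph. You simply spell out in more detail the steps the paper leaves implicit.
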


\begin{proof} If  $\beta X$ exists, it  follows from Theorem 3.1 that $\beta^{f}X$ exists, too. In view of Remark 1.4, $\beta^{f}X\leq\beta X$. 
\end{proof}

From Theorem 3.8 and Corollary 3.7, we deduce the following:

\begin{prop}
$\mathbf{[ZF]}$ Suppose that $X$ is a Tychonoff space such that $\beta X$
exists. If $\beta X$ is completely regular, then $\beta X\thickapprox
\beta^f X$.
\end{prop}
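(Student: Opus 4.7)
The statement is a quick consequence of the two results that immediately precede it, so the plan is to simply combine them with the basic characterization of equivalence for Hausdorff compactifications stated in the introduction.

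First I would invoke Theorem 3.8, which tells us that, under the hypothesis that $\beta X$ exists, the functional \v Cech-Stone compactification $\beta^{f}X$ also exists and satisfies $\beta^{f}X\leq \beta X$. This gives one of the two comparisons needed for equivalence.

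Next I would use the extra hypothesis that $\beta X$ is completely regular. Since $\beta X$ is then a completely regular Hausdorff compactification of $X$, Corollary 3.7 applies to $\alpha X = \beta X$ and yields $\beta X\leq \beta^{f}X$.

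Finally, having both $\beta^{f}X\leq \beta X$ and $\beta X\leq \beta^{f}X$, and since both are Hausdorff compactifications of $X$, the equivalence $\beta X\thickapprox \beta^{f}X$ follows from the standard $\mathbf{ZF}$-fact recorded at the beginning of Section~1, namely that for Hausdorff compactifications $\alpha X$ and $\gamma X$ of $X$, $\alpha X\thickapprox \gamma X$ holds iff $\alpha X\leq \gamma X$ and $\gamma X\leq \alpha X$. There is no real obstacle here: the whole argument is a two-line assembly of Theorem 3.8 and Corollary 3.7, and all the hard work (which inputs from Lemma 2.10, Taimanov's theorem in $\mathbf{ZF}$, and the characterization of equivalence) has already been carried out in the cited results.
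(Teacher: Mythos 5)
Your proposal is correct and is exactly the deduction the paper intends: the paper introduces this proposition with the phrase ``From Theorem 3.8 and Corollary 3.7, we deduce the following,'' which is precisely your combination of $\beta^{f}X\leq\beta X$ (Theorem 3.8) with $\beta X\leq\beta^{f}X$ (Corollary 3.7 applied to the completely regular Hausdorff compactification $\beta X$), followed by the antisymmetry of $\leq$ for Hausdorff compactifications noted in Section 1. Nothing is missing.
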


\begin{thm}
$\mathbf{[ZF]}$ For every non-empty Tychonoff space $X$, the following
conditions are equivalent:

\begin{enumerate}
\item[(i)] the Wallman space $\mathcal{W}(X, \mathcal{Z}(X))$ is compact;

\item[(ii)] there exists a compactification $\alpha X$ of $X$ such that $%
C_{\alpha}(X)=C^{\ast}(X)$;

\item[(iii)] $e_{F}X$ is compact where $F=C^{\ast}(X)$.
\end{enumerate}
\end{thm}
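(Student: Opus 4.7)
The plan is to dispatch (ii) $\Leftrightarrow$ (iii) via Theorem 3.1, to obtain (i) $\Rightarrow$ (ii) from the Wallman construction together with Taimanov's theorem, and to handle the substantive implication (iii) $\Rightarrow$ (i) by extracting a $\mathcal{Z}(X)$-ultrafilter from any family of zero-sets with the finite intersection property, using compactness of $e_{F}X$.

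For (ii) $\Leftrightarrow$ (iii), I apply Theorem 3.1 with $F=C^{\ast}(X)$ and observe that $C_{\alpha}(X)\subseteq C^{\ast}(X)$ for any compactification $\alpha X$ (continuous extensions to a compact space are bounded), so the condition ``$F\subseteq C_{\alpha}(X)$'' is equivalent to ``$C_{\alpha}(X)=C^{\ast}(X)$''. For (i) $\Rightarrow$ (ii), compactness of $\mathcal{W}(X,\mathcal{Z}(X))$ together with the normal base property of $\mathcal{Z}(X)$ (Definition 1.8(iv)) yields that $h_{\mathcal{Z}(X)}$ realizes $\mathcal{W}(X,\mathcal{Z}(X))$ as a Hausdorff compactification of $X$; disjoint zero-sets $Z_{1},Z_{2}$ produce disjoint basic closures $[Z_{1}]_{\mathcal{Z}(X)}\cap[Z_{2}]_{\mathcal{Z}(X)}=\emptyset$ by the FIP of ultrafilters, and then Taimanov's theorem (Theorem 5.15 of \cite{PW}), applied after using continuous separations in a compact interval $[a,b]\supseteq f(X)$ to reduce any pair of disjoint closed sets to disjoint zero-sets of $X$, extends every $f\in C^{\ast}(X)$ over $\mathcal{W}(X,\mathcal{Z}(X))$, giving $C_{\mathcal{W}(X,\mathcal{Z}(X))}(X)=C^{\ast}(X)$.

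For (iii) $\Rightarrow$ (i), since $[Z_{1}]_{\mathcal{Z}(X)}\cap[Z_{2}]_{\mathcal{Z}(X)}=[Z_{1}\cap Z_{2}]_{\mathcal{Z}(X)}$, compactness of $\mathcal{W}(X,\mathcal{Z}(X))$ reduces to: every family in $\mathcal{Z}(X)$ with FIP extends to a $\mathcal{Z}(X)$-ultrafilter. Given such a family $\{Z_{i}\}_{i\in I}$, the closed sets $\overline{e_{F}(Z_{i})}^{e_{F}X}$ inherit the FIP in $e_{F}X$, so compactness of $e_{F}X$ provides $y\in\bigcap_{i}\overline{e_{F}(Z_{i})}^{e_{F}X}$. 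I set
\[
p=\{Z\in\mathcal{Z}(X)\colon y\in\overline{e_{F}(Z)}^{e_{F}X}\},
\]
which contains $\{Z_{i}\}$ and is upward closed and proper. Maximality (meaning $Z\notin p$ implies some $Z'\in p$ is disjoint from $Z$) is obtained from a neighborhood argument: a basic open neighborhood $W=\{z\colon|z(f)-y(f)|<\varepsilon,\,f\in K\}$ of $y$ that misses $e_{F}(Z)$ witnesses that the zero-set $Z'=\bigcap_{f\in K}f^{-1}([y(f)-\varepsilon/2,y(f)+\varepsilon/2])$ belongs to $p$ and satisfies $Z\cap Z'=\emptyset$.

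The main obstacle is showing that $p$ is closed under finite intersection, which I reduce to the sub-claim $\overline{e_{F}(Z_{1})}^{e_{F}X}\cap\overline{e_{F}(Z_{2})}^{e_{F}X}=\overline{e_{F}(Z_{1}\cap Z_{2})}^{e_{F}X}$ for zero-sets $Z_{1},Z_{2}$. To prove it, suppose $y$ lies in the left side but not the right; since $e_{F}X$ is compact and completely regular (as a subspace of $\mathbb{R}^{F}$), Corollary 2.11 furnishes a continuous $\phi\colon e_{F}X\to[0,1]$ with $\phi(y)=0$ and $\phi\bigl(\overline{e_{F}(Z_{1}\cap Z_{2})}^{e_{F}X}\bigr)=\{1\}$. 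Setting $g=\phi\circ e_{F}\in C^{\ast}(X)$, the zero-sets $Z_{i}\cap g^{-1}([0,1/2])$ for $i=1,2$ are disjoint (since $g\equiv 1$ on $Z_{1}\cap Z_{2}$); however, intersecting any neighborhood of $y$ with $\phi^{-1}((-\infty,1/2))$ and using $y\in\overline{e_{F}(Z_{i})}^{e_{F}X}$ places $y$ in both of their closures, contradicting the fact that disjoint zero-sets have disjoint closures in $e_{F}X$ (functional separation of disjoint zero-sets in $X$ lifts to $e_{F}X$ because $C^{\ast}(X)=C_{e_{F}}(X)$). With the sub-claim in hand, $p$ is a $\mathcal{Z}(X)$-ultrafilter extending $\{Z_{i}\}$, yielding the required point in $\bigcap_{i}[Z_{i}]_{\mathcal{Z}(X)}$ and hence the compactness of $\mathcal{W}(X,\mathcal{Z}(X))$.
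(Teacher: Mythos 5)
Your proposal is correct and follows essentially the same route as the paper: both handle (ii)$\Leftrightarrow$(iii) via Theorem 3.1 and prove the hard implication (iii)$\Rightarrow$(i) by picking $y$ in the intersection of the closures in $e_FX$, forming the $\mathcal{Z}(X)$-filter of zero-sets whose $e_F$-images have $y$ in their closure, and using functional separation of disjoint zero-sets (lifted to $e_FX$ via the projections) to get closure under finite intersections and maximality. The only differences are cosmetic: you verify maximality directly via the ``disjoint member'' characterization with an explicit basic-neighbourhood zero-set, and you spell out (i)$\Rightarrow$(ii) via Taimanov's theorem where the paper declares it obvious.
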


\begin{proof}
It is obvious that (i) implies (ii) and (iii) implies (ii). That (ii) implies (iii) follows from Theorem 3.1. To show that (ii) implies (i),  let us assume (iii) and put $h=e_F$ where $F=C^{\ast}(X)$. Now, consider any filter $\mc{A}$ in $\mc{Z}(X)$. By the compactness of $e_{F}X$, there exists $p\in\bigcap_{A\in\mc{A}}\text{cl}_{e_{F}X}[h(A)]$. We define 
$$\mc{F}=\{ f^{-1}(0): p\in\text{cl}_{e_{F}X}h(f^{-1}(0)): f\in F\}.$$ 
Then $\mc{A}\subseteq\mc{F}$. We shall prove that $\mc{F}$ is a filter in $\mc{Z}(X)$. 

Let $Z_1, Z_2\in\mc{F}$. To show that $Z_1\cap Z_2\in\mc{F}$, suppose that $p\notin\text{cl}_{e_{F}X}[h(Z_1\cap Z_2)]$. By the complete regularity of $\mathbb{R}^F$, there exists $\psi\in C^{\ast}(\mathbb{R}^F)$ such that $\psi(p)=0$ and $h(Z_1\cap Z_2)\subseteq\psi^{-1}(1)$. Let $A_1=Z_1\cap h^{-1}(\psi^{-1}((-\infty,  \frac{1}{2}]))$ and $A_2=Z_2\cap h^{-1}(\psi^{-1}((-\infty, \frac{1}{2}]))$. Then $A_1, A_2\in\mc{Z}(X)$ and $A_1\cap A_2=\emptyset$. There exists $g\in C^{\ast}(X)$ such that $A_1\subseteq g^{-1}(0)$ and $A_2\subseteq g^{-1}(1)$. For the projection $\pi_g:\mathbb{R}^F\to \mathbb{R}$, we have $h(A_1)\subseteq \pi_{g}^{-1}(0)$ and $h(A_2)\subseteq\pi_{g}^{-1}(1)$, so $\text{cl}_{e_{F}X}[h(A_1)]\cap\text{cl}_{e_{F}X}[h(A_2)]=\emptyset$. This contradicts the fact that $p\in \text{cl}_{e_{F}X}[h(A_1)]\cap\text{cl}_{e_{F}X}[h(A_2)]$. Hence $p\in\text{cl}_{e_{F}X}[h(Z_1\cap Z_2)]$. This implies that $Z_1\cap Z_2\in\mc{F}$, so $\mc{F}$ is a filter in $\mc{Z}(X)$. To check that the filter $\mc{F}$ is maximal in $\mc{Z}(X)$, suppose that $\mc{H}$ is a filter in $\mc{Z}(X)$ such that $\mc{F}\subseteq\mc{H}$. Suppose that $Z\in\mc{H}$ and $Z\notin\mc{F}$. Then $p\notin\text{cl}_{e_{F}X}h(Z)$. By the complete regularity of $\mathbb{R}^{F}$, there exists $A\in\mc{Z}(X)$ such that $A\cap Z=\emptyset$ and $p\in\text{cl}_{e_{F}X}(h(A))$. Then $A\in\mc{F}$, so $A\in\mc{H}$.  This is impossible because  $Z\in\mc{H}$, while $Z\cap A=\emptyset$.  Therefore, $\mc{F}$ is an ultrafilter in $\mc{Z}(X)$. Since every filter in $\mc{Z}(X)$ is contained in an ultrafilter in $\mc{Z}(X)$, the Wallman space $\mc{W}(X, \mc{Z}(X))$ is compact. Hence (ii) implies (i).
\end{proof}

\begin{cor}
$\mathbf{[ZF]}$ Let $X$ be a non-empty  Tychonoff space which has its
functional \v{C}ech-Stone compactification. Then $\mathcal{W}(X,\mathcal{Z}%
(X))$ is a Hausdorff compactification of $X$ equivalent with $\beta ^{f}X$.
\end{cor}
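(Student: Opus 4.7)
The plan is to combine the machinery already established. From the hypothesis that $\beta^f X$ exists, Proposition 3.4 gives that the space $e_F X$ is compact for $F = C^{\ast}(X)$. The implication (iii) $\Rightarrow$ (i) of Theorem 3.10 then yields at once that $\mc{W}(X, \mc{Z}(X))$ is compact. The canonical map $h_{\mc{Z}(X)}$ is a dense homeomorphic embedding in $\mathbf{ZF}$ (this uses only properties (i) and (iii) of the normal base $\mc{Z}(X)$). The space $\mc{W}(X, \mc{Z}(X))$ is Hausdorff: given distinct ultrafilters $p_1 \neq p_2$, we find $Z_1 \in p_1$, $Z_2 \in p_2$ with $Z_1 \cap Z_2 = \emptyset$; property (iv) of Definition 1.6 supplies $C_1, C_2 \in \mc{Z}(X)$ whose complementary basic sets in $\mc{W}(X, \mc{Z}(X))$ are disjoint open neighbourhoods of $p_1$ and $p_2$. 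Hence $\langle \mc{W}(X, \mc{Z}(X)), h_{\mc{Z}(X)} \rangle$ is a Hausdorff compactification of $X$.

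To identify it with $\beta^f X$, I would introduce the natural map $\Phi: \mc{W}(X, \mc{Z}(X)) \to e_F X$ sending an ultrafilter $p$ to the unique point of
\[
\bigcap_{Z \in p} \text{cl}_{e_F X}\, e_F(Z).
\]
Existence of a point in this intersection follows from compactness of $e_F X$ and the finite intersection property of $p$. Uniqueness is the delicate point: if $y_1, y_2$ were two such points, some $f \in F$ would witness $y_1(f) < c < y_2(f)$ for $c = (y_1(f) + y_2(f))/2$, and then the two zero-sets $Z_{\leq} = f^{-1}((-\infty, c])$ and $Z_{\geq} = f^{-1}([c, +\infty))$ cover $X$; the standard $\mathbf{ZF}$ argument (if neither of them lay in $p$, each would be disjoint from some element of $p$, forcing $\emptyset \in p$) places one of them in $p$, contradicting the corresponding containment $y_i \in \text{cl}_{e_F X}\, e_F(Z_{\cdot})$.

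One then checks that $\Phi$ extends the identity on $X$ after the usual identification, which is a direct computation from the definitions of $h_{\mc{Z}(X)}$ and $e_F$. Injectivity of $\Phi$ follows because disjoint zero-sets $Z_1, Z_2 \in \mc{Z}(X)$ are functionally separated by some $g \in F = C^{\ast}(X)$ (Lemma 2.10 applies to any two disjoint zero-sets), so their images under $e_F$ have disjoint closures in $e_F X \subseteq \mathbb{R}^F$. Surjectivity is obtained, exactly as in the proof of (ii) $\Rightarrow$ (i) of Theorem 3.10, by verifying that for any $y \in e_F X$ the family $p_y = \{ Z \in \mc{Z}(X) : y \in \text{cl}_{e_F X}\, e_F(Z) \}$ is an ultrafilter in $\mc{Z}(X)$ with $\Phi(p_y) = y$. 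Continuity of $\Phi$ reduces to showing that the preimages of subbasic closed sets $\pi_f^{-1}(K) \cap e_F X$ (with $K \subseteq \mathbb{R}$ closed and $f \in F$) are closed in $\mc{W}(X, \mc{Z}(X))$, and this is again handled by the same zero-set/ultrafilter manipulation. As $\Phi$ is a continuous bijection from the compact space $\mc{W}(X, \mc{Z}(X))$ onto the Hausdorff space $e_F X$, it is a homeomorphism, and combined with the equivalence $e_F X \thickapprox \beta^f X$ supplied by Proposition 3.4 we obtain $\mc{W}(X, \mc{Z}(X)) \thickapprox \beta^f X$.

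The main technical obstacle is the continuity of $\Phi$, which must be established without any choice principle. This is handled by the same ultrafilter-to-closure correspondence driving the uniqueness argument, so no new ingredient beyond Theorem 3.10 and the ZF-availability of functional separation of disjoint zero-sets (Lemma 2.10) is required.
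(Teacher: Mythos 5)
Your proof is correct, but it follows a genuinely different route from the paper. The paper's argument is much shorter: after invoking Theorem 3.10 to get compactness of $\mc{W}(X,\mc{Z}(X))$, it applies Taimanov's extension theorem (Theorem 5.15 of [PW], valid in $\mathbf{ZF}$) twice --- once to extend $h_{\mc{Z}(X)}$ over $\beta^{f}X$ and once to extend $\mathrm{id}_X$ over $\mc{W}(X,\mc{Z}(X))$, using in both directions the fact that disjoint zero-sets have disjoint closures in each of the two compactifications --- and then concludes equivalence from mutual domination of Hausdorff compactifications. You instead build an explicit homeomorphism $\Phi$ from $\mc{W}(X,\mc{Z}(X))$ onto $e_{F}X$ and compose with the equivalence $e_{F}X\thickapprox\beta^{f}X$ from Proposition 3.4; this is essentially a $\mathbf{ZF}$-rendering of the classical identification of the $z$-ultrafilter space with the evaluation compactification. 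Your route is longer and requires the well-definedness, bijectivity and continuity checks you outline (all of which do go through in $\mathbf{ZF}$ by the ultrafilter-versus-closure manipulations you describe), but it buys a concrete map and avoids the Taimanov black box; the paper's route buys brevity at the cost of relying on that cited theorem. One small attribution slip: the functional separation of two disjoint zero-sets is not Lemma 2.10 itself (which separates a compact set from a disjoint closed set) but the classical fact quoted inside its proof (1.10 of [GJ] or Theorem 1.5.14 of [En]); this does not affect the validity of your argument.
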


\begin{proof} It follows from Theorem 3.10 that $\mc{W}(X, \mc{Z}(X))$ is compact. Since, for every pair $A,B$ of disjoint  sets from $\mc{Z}(X)$, the closures of $A$ and $B$ in $\beta^f X$ are also disjoint, it follows from Theorem 5.15 of \cite{PW} that the mapping $h_{\mc{Z}(X)}: X\to \mc{W}(X, \mc{Z}(X))$ is continuously extendable over $\beta^{f} X$, hence $\mc{W}(X,\mc{Z}(X))\leq\beta^{f}X$. On the other hand, if $A, B\in\mc{Z}(X)$ are disjoint, then the closures of $A$ and $B$ in $\mc{W}(X, \mc{Z}(X))$ are disjoint; therefore, in view of Theorem 5.15 of \cite{PW}, the mapping $\text{id}_X: X\to\beta^{f} X$ has a continuous extension over $\mc{W}(X,\mc{Z}(X))$. This gives that $\beta^{f}X\leq\mc{W}(X, \mc{Z}(X))$. 
\end{proof}

\begin{rem}
For a compactification $\alpha X$ of $X$, let $\mc{Z}_{\alpha}(X)=\{ f^{-1}(0): f\in C_{\alpha}(X)\}$. It may happen in a model of \textbf{ZF} that, for a completely regular
space $X$, there exists a Hausdorff compactification $\alpha X$ of $X$ such
that $\mc{Z}(X)=\mc{Z}_{\alpha}(X)$, while the Wallman space $\mathcal{W}(X, \mc{Z}(X))$ is not compact. To prove this, let us notice that Form 70 of \cite{HR} is equivalent to the statement: There are no free ultrafilters in the power set $\mc{P}(\omega)$. In the model $\mc{M}2$ of \cite{HR}, Form 70 of \cite{HR} is false. This implies that the Wallman space $\mathcal{W}(\omega, \mathcal{P}(\omega))$ is not compact in $\mc{M}2$. However, for the Alexandroff compactification $\alpha_a\omega$ of the discrete space $\omega$ in $\mc{M}2$, we have that $\mathcal{Z}(\omega)=\mathcal{Z}_{\alpha_a}(\omega)$.
\end{rem}

For a topological space $X$, let us denote by $Cl(X)$ the collection of all closed sets of $X$.

\begin{thm} $\mathbf{[ZF]}$ Suppose that $X$ is a $T_1$-space which satisfies $\mathbf{UL}(X)$.  Then the following conditions are equivalent:
\begin{enumerate}
\item[(i)]  the Wallman space $\mc{W}(X, Cl(X))$ is compact;
\item[(ii)] the \v Cech-Stone compactification of $X$ exists;
\item[(iii)] the functional \v Cech-Stone compactification  of $X$ exists.
\end{enumerate}
\end{thm}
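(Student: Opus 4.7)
The plan is to prove the cycle $(i)\Rightarrow(ii)\Rightarrow(iii)\Rightarrow(i)$. Since $X$ is $T_1$ and $\mathbf{UL}(X)$ holds, $X$ is normal, so the family $Cl(X)$ satisfies condition (iv) of Definition 1.8 (the other conditions being immediate) and is therefore a Wallman base on $X$. I will freely use the standard fact that $\mc{W}(X,Cl(X))$ is compact if and only if every filter in $Cl(X)$ extends to an ultrafilter in $Cl(X)$.

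For $(i)\Rightarrow(ii)$, I would first verify that $\mc{W}(X,Cl(X))$ is Hausdorff using normality together with the primeness of ultrafilters in $Cl(X)$: given distinct ultrafilters $p,q$, pick disjoint $A\in p$ and $B\in q$ and apply condition (iv) to obtain $C_1,C_2\in Cl(X)$ with $A\cap C_1=B\cap C_2=\emptyset$ and $C_1\cup C_2=X$, so that the basic open sets $\mc{W}(X,Cl(X))\setminus[C_1]_{Cl(X)}$ and $\mc{W}(X,Cl(X))\setminus[C_2]_{Cl(X)}$ separate $p$ and $q$. To identify this Hausdorff compactification with $\beta X$, I would apply Taimanov's theorem (Theorem 5.15 of \cite{PW}): for a continuous $f:X\to K$ with $K$ compact Hausdorff and disjoint closed $F_1,F_2\subseteq K$, the preimages $f^{-1}(F_i)$ are disjoint members of the Wallman base itself, so their Wallman closures $[f^{-1}(F_i)]_{Cl(X)}$ are automatically disjoint (a filter containing two disjoint sets would contain $\emptyset$). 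The implication $(ii)\Rightarrow(iii)$ is immediate from Theorem 3.8 and requires no further work.

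The substantive direction is $(iii)\Rightarrow(i)$, which I would handle by adapting the argument of Theorem 3.10. Assuming $\beta^{f}X$ exists, Proposition 3.4 gives that $e_{F}X$ is compact for $F=C^{\ast}(X)$, and I write $h=e_{F}$. To establish $(i)$, given a filter $\mc{A}$ in $Cl(X)$, I first fix $p\in\bigcap_{A\in\mc{A}}\overline{h(A)}^{e_{F}X}$ using the FIP of $\{\overline{h(A)}^{e_{F}X}\}_{A\in\mc{A}}$ and the compactness of $e_{F}X$, and set $\mc{F}=\{B\in Cl(X):p\in\overline{h(B)}^{e_{F}X}\}$; trivially $\mc{A}\subseteq\mc{F}$, the collection $\mc{F}$ is closed under supersets, and $\emptyset\notin\mc{F}$. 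The main obstacle, and the only place where the hypothesis $\mathbf{UL}(X)$ is indispensable, is showing $\mc{F}$ is closed under finite intersections. Suppose $B_1,B_2\in\mc{F}$ but $p\notin\overline{h(B_1\cap B_2)}^{e_{F}X}$; by complete regularity of $e_{F}X\subseteq\mathbb{R}^{F}$ one picks $\psi\in C^{\ast}(e_{F}X)$ with $\psi(p)=0$ and $\overline{h(B_1\cap B_2)}\subseteq\psi^{-1}(1)$, sets $W=h^{-1}(\psi^{-1}([0,\tfrac{1}{2}]))$ and $D_{i}=B_{i}\cap W$, and observes that $D_1\cap D_2=\emptyset$ (since $B_1\cap B_2\subseteq h^{-1}(\psi^{-1}(1))$ is disjoint from $W$), while $p\in\overline{h(D_i)}$ because $p$ lies both in $\overline{h(B_i)}$ and in the open set $\psi^{-1}([0,\tfrac{1}{2}))$. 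Theorem 3.10 handled only disjoint zero-sets, where functional separation is automatic; in the present setting $D_1$ and $D_2$ are arbitrary disjoint closed sets, so $\mathbf{UL}(X)$ is invoked to produce $g\in C^{\ast}(X)=F$ with $D_1\subseteq g^{-1}(0)$ and $D_2\subseteq g^{-1}(1)$, after which the projection $\pi_{g}:\mathbb{R}^{F}\to\mathbb{R}$ forces $\overline{h(D_1)}\cap\overline{h(D_2)}=\emptyset$, contradicting $p\in\overline{h(D_1)}\cap\overline{h(D_2)}$. Maximality of $\mc{F}$ then follows exactly as in Theorem 3.10: given $B\notin\mc{F}$, pick $\psi$ with $\psi(p)=0$ and $\overline{h(B)}\subseteq\psi^{-1}(1)$ and take $B'=h^{-1}(\psi^{-1}([0,\tfrac{1}{2}]))\in\mc{F}$ to obtain $B\cap B'=\emptyset$, completing the cycle.
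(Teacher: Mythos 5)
Your proof is correct, and the core mechanism --- extending a filter $\mc{A}$ in $Cl(X)$ to the ultrafilter $\mc{F}$ of all closed sets whose images have closures containing a fixed point $p$, with the zero-set trick $W=h^{-1}(\psi^{-1}([0,\tfrac12]))$ to get closure under intersection and $\mathbf{UL}(X)$ invoked exactly once to separate the two auxiliary disjoint closed sets --- is the same as the paper's. The organization differs, though. The paper proves (iii)$\Rightarrow$(ii) first, using $\mathbf{UL}(X)$ and Taimanov's theorem to show that $\beta^{f}X$ already has the universal extension property and hence \emph{is} $\beta X$, and only then runs the filter argument for (ii)$\Rightarrow$(i) with $p$ chosen inside $\beta X$ (so the separating function $g$ must be extended over $\beta X$ via the defining property of the \v Cech--Stone compactification). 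You instead go (iii)$\Rightarrow$(i) directly, placing $p$ in $e_{F}X$ for $F=C^{\ast}(X)$ (legitimately, via Proposition 3.4) and using the projection $\pi_{g}$ to separate the closures, which is self-contained and avoids the intermediate identification $\beta^{f}X\thickapprox\beta X$; the price is that you then owe a real proof of (i)$\Rightarrow$(ii), which the paper dismisses as obvious but which you correctly supply (Hausdorffness of $\mc{W}(X,Cl(X))$ from normality plus primeness of ultrafilters, and Taimanov applied to the automatically disjoint traces $[f^{-1}(F_i)]_{Cl(X)}$). The paper's route yields the standalone equivalence (ii)$\Leftrightarrow$(iii) as a byproduct; yours keeps the filter argument closer to the template of Theorem 3.10. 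Both are valid in $\mathbf{ZF}$, and your explicit identification of the single point where $\mathbf{UL}(X)$ is indispensable (arbitrary disjoint closed sets versus the automatically separated disjoint zero-sets of Theorem 3.10) is exactly the right observation.
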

\begin{proof} It is obvious that if (i) holds, then the compactification $\mc{W}(X, Cl(X))$ is the \v Cech-Stone compactification of $X$, so (i) implies (ii). In view of Theorem 3.8, (ii) implies (iii).  

Assume that $\beta^{f} X$ exists.  Let $K$ be a compact Hausdorff space and let $h: X\to K$ be a continuous mapping. Consider any pair $A, B$ of disjoint closed sets of $K$. Since $\mathbf{UL}(X)$ holds, the sets $h^{-1}(A)$ and $h^{-1}(B)$ are functionally separated in $X$. In consequence, the sets $h^{-1}(A)$ and $h^{-1}(B)$ have disjoint closures in $\beta^{f}X$. By Theorem 5.15 of \cite{PW}, the function $h$ is continuously extendable over $\beta^{f} X$. This proves that $\beta^{f} X$ is the \v Cech-Stone compactification of $X$. Therefore, (ii) and (iii) are equivalent. 

Assume (ii).  Let $\mc{A}$ be a filter in $Cl(X)$. There exists $p\in\bigcap_{A\in\mc{A}}\text{cl}_{\beta X}A$. We define
$$\mc{F}=\{A\in Cl(X): p\in\text{cl}_{\beta X}A\}.$$ 
Let us check that $\mc{F}$ is a filter in $Cl(X)$. Let $C_1, C_2\in\mc{F}$. Suppose that $p\notin\text{cl}_{\beta X}(C_1\cap C_2)$. In much the same way, as in the proof to Theorem 3.10, we find a set $D\in\mc{Z}(\beta X)$ such that $p\in\text{int}_{\beta X}D$ and $D\cap(C_1\cap C_2)=\emptyset$. We put $A_i=D\cap C_i$ for $i\in\{1,2\}$. We have that $p\in\text{cl}_{\beta X}A_i$ for $i\in\{1,2\}$. On the other hand, since $X$ satisfies $\mathbf{UL}(X)$, the sets $A_1, A_2$ have disjoint closures in $\beta X$. This is impossible. In consequence, $C_1\cap C_2\in\mc{F}$. Therefore, $\mc{F}$ is a filter. Of course, $\mc{A}\subseteq\mc{F}$. We check that $\mc{F}$ is an ultrafilter in $Cl(X)$. To do this, consider any filter $\mc{H}$ in $Cl(X)$ such that $\mc{F}\subseteq\mc{H}$. Suppose that there exists $A\in\mc{H}\setminus\mc{F}$. Then $p\notin\text{cl}_{\beta X} A$, so there exists $Z\in\mc{Z}(\beta X)$ such that $p\in\text{int}_{\beta X} Z$ and $Z\cap A=\emptyset$. It is obvious that $Z\cap X\in\mc{F}$, so $Z\cap X\in\mc{H}$. This is impossible because $A\in\mc{H}$ and $(Z\cap X)\cap A=\emptyset$. Therefore, $\mc{F}$ is an ultrafilter in $Cl(X)$. Since every filter in $Cl(X)$ can be enlarged to an ultrafilter in $Cl(X)$, the space $\mc{W}(X, Cl(X))$ is compact. 
\end{proof}

\begin{cor} $\mathbf{[ZF]}$ Let $X$ be a normal $T_1$-space such that $\beta^{f} X$ exists. If $\mathbf{UL}(X)$ holds, then
$\beta X\thickapprox \beta^{f} X\thickapprox\mc{W}(X, Cl(X))$.
\end{cor}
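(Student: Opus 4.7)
The plan is to piece together facts already established, with Theorem 3.13 doing most of the heavy lifting. Since $X$ is a $T_1$-space satisfying $\mathbf{UL}(X)$, and since $\beta^{f}X$ exists, condition (iii) of Theorem 3.13 is met, so all three conditions (i)--(iii) of that theorem hold: $\beta X$ exists and $\mc{W}(X, Cl(X))$ is compact. So I first record existence of all three compactifications, and the task reduces to showing they are pairwise equivalent.

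For $\beta X \thickapprox \beta^{f}X$, I would simply quote the argument given in the proof of (iii)$\Rightarrow$(ii) of Theorem 3.13: if $K$ is a compact Hausdorff space and $h:X\to K$ is continuous, then for any two disjoint closed sets $A,B\subseteq K$, their preimages $h^{-1}(A), h^{-1}(B)$ are disjoint closed sets in $X$ and hence functionally separated (by $\mathbf{UL}(X)$), so they have disjoint closures in $\beta^{f}X$; Taimanov's theorem (Theorem~5.15 of \cite{PW}, which is valid in \textbf{ZF}) then yields a continuous extension of $h$ to $\beta^{f}X$. Thus $\beta^{f}X$ has the defining universal property of the \v Cech-Stone compactification, and by uniqueness (Definition~1.3 together with the remark after it), $\beta X\thickapprox\beta^{f}X$.

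For $\mc{W}(X, Cl(X))\thickapprox\beta X$, I would argue the same way. Because $X$ is a normal $T_1$-space, $Cl(X)$ is a normal base for $X$ (the only nontrivial condition (iv) of Definition~1.6 being exactly normality together with $\mathbf{UL}(X)$), and the resulting Wallman compactification $\mc{W}(X, Cl(X))$ is Hausdorff. Moreover, by construction, for any disjoint $A_1, A_2\in Cl(X)$ the closures $[A_1]_{Cl(X)}$ and $[A_2]_{Cl(X)}$ in $\mc{W}(X, Cl(X))$ are disjoint. Applying Taimanov's theorem once more, every continuous map $h:X\to K$ into a compact Hausdorff space $K$ extends continuously over $\mc{W}(X, Cl(X))$. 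Hence $\mc{W}(X, Cl(X))$ is also a \v Cech-Stone compactification of $X$, giving $\mc{W}(X, Cl(X))\thickapprox\beta X$.

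There is no serious obstacle — the whole corollary is a bookkeeping statement. The one subtle point to be careful about is checking that $Cl(X)$ really is a normal base in \textbf{ZF} under our hypotheses (so that the Wallman space is genuinely a Hausdorff compactification and not merely a compact $T_1$-space); this is where both normality and $\mathbf{UL}(X)$ are needed, and it should be remarked explicitly. Everything else is a direct appeal to Theorem~3.13 and to the \textbf{ZF}-version of Taimanov's theorem cited from \cite{PW}.
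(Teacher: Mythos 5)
Your proof is correct and follows exactly the route the paper intends: the corollary is stated without a separate proof because it is just Theorem 3.13 combined with the observations made inside that theorem's proof (that $\beta^{f}X$ has the \v Cech--Stone universal property via Taimanov's theorem, and that the compact Wallman space $\mc{W}(X, Cl(X))$ is a \v Cech--Stone compactification), which is precisely what you assemble. The only nitpick is that condition (iv) of Definition 1.6 for $Cl(X)$ requires only normality, not $\mathbf{UL}(X)$; since $\mathbf{UL}(X)$ implies normality anyway, this is harmless.
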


\begin{rem} Theorem 18 of \cite{HerK} follows directly from our Corollary 3.14 and Theorem 3.10.
\end{rem}

Since $\mathbf{UFT}$ is equivalent to the Boolean Prime Ideal Theorem $%
\mathbf{BPI}$ (Form 14 in \cite{HR}, denoted by \textbf{PIT} in \cite{Her}),
all statements equivalent to $\mathbf{BPI}$ are also equivalent to
conditions (i)-(vii) of the following theorem:

\begin{thm}
$\mathbf{[ZF]}$ The following conditions are equivalent:

\begin{enumerate}
\item[(i)] $\mathbf{UFT}$;

\item[(ii)] every Tychonoff space has its functional \v Cech-Stone
compactification;

\item[(iii)] every Cantor cube $2^{J}$ has a Hausdorff compactification $Y$ such that, for each $j\in J$, the projection $\pi _{j}:2^{J}\rightarrow
\{0,1\}$ has a continuous extension over $Y$;

\item[(iv)] every Cantor cube $2^J$ is compact;

\item[(v)] every Tychonoff space $X$ has a Hausdorff compactification $%
\alpha X$ such that every clopen set of $X$ has a clopen closure in $\alpha
X$;

\item[(vi)] every Tychonoff space $X$ has a Hausdorff compactification $%
\alpha X$ such that every continuous function from $X$ into the discrete
space $\{0,1\}$ is continuously extendable to a function from $\alpha X$
into $\{0, 1\}$;

\item[(vii)] every discrete space $X$ has a compactification $\alpha X$ such
that, for each subset $A$ of $X$, the closure in $\alpha X$ of $A$ is clopen
in $\alpha X$.
\end{enumerate}
\end{thm}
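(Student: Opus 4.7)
The plan is to establish the seven-way equivalence via two cycles: $(i)\Rightarrow(ii)\Rightarrow(iii)\Rightarrow(iv)\Rightarrow(i)$ and $(i)\Rightarrow(v)\Rightarrow(vi)\Rightarrow(vii)\Rightarrow(i)$. The implication $(i)\Rightarrow(ii)$ is immediate from the compactness of Tychonoff cubes under $\mathbf{UFT}$ together with Proposition 3.5, which identifies $\beta^{f}X$ with $e_{C^{\ast}(X)}X$. For $(ii)\Rightarrow(iii)$, I apply $(ii)$ to the Cantor cube $2^{J}$ itself; each projection $\pi_{j}$ belongs to $C^{\ast}(2^{J})=C_{\beta^{f}}(2^{J})$ and therefore extends continuously over $\beta^{f}(2^{J})$. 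The implication $(iv)\Rightarrow(i)$ is the classical $\mathbf{ZF}$-equivalence of $\mathbf{UFT}$ with compactness of all Cantor cubes (Theorem 4.37 of \cite{Her}).

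For $(iii)\Rightarrow(iv)$, suppose $Y$ is a Hausdorff compactification of $2^{J}$ with continuous extensions $\tilde{\pi}_{j}:Y\to\mathbb{R}$. Since $\tilde{\pi}_{j}^{-1}(\{0,1\})$ is closed in $Y$ and contains the dense set $2^{J}$, each $\tilde{\pi}_{j}$ takes values in $\{0,1\}$. The coordinatewise map $e:Y\to 2^{J}$ defined by $e(y)(j)=\tilde{\pi}_{j}(y)$ is then continuous and restricts to the identity on $2^{J}$, so $2^{J}=e(Y)$ is compact as a continuous image of $Y$. On the other side, $(i)\Rightarrow(v)$ follows by extending $\chi_{A}\in C^{\ast}(X)$ continuously to $\beta^{f}X$ for clopen $A\subseteq X$; a density argument forces the extension to take values in $\{0,1\}$, and the preimage of $1$ is a clopen set whose trace on $X$ is $A$, hence equals $\overline{A}^{\beta^{f}X}$. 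The step $(v)\Rightarrow(vi)$ is an elementary use of characteristic functions of clopen closures, and $(vi)\Rightarrow(vii)$ is the special case of a discrete $X$ where every subset is clopen.

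The step I expect to be the main obstacle is $(vii)\Rightarrow(i)$. Given a filter $\mc{F}$ in $\mc{P}(J)$, I regard $J$ as a discrete space and take a compactification $\alpha J$ furnished by $(vii)$; then $\{\overline{A}^{\alpha J}:A\in\mc{F}\}$ is a family of clopen sets with the finite intersection property, so by compactness of $\alpha J$ there is some $p\in\bigcap_{A\in\mc{F}}\overline{A}^{\alpha J}$. Setting $\mc{U}=\{B\subseteq J:p\in\overline{B}^{\alpha J}\}$, I must verify that $\mc{U}$ is an ultrafilter extending $\mc{F}$. The delicate points are: (a) $\overline{B}^{\alpha J}\cap J=B$ for every $B\subseteq J$, which uses the clopen closure of $J\setminus B$ to expel points of $J\setminus B$ from $\overline{B}^{\alpha J}$; (b) $\overline{A\cap B}^{\alpha J}=\overline{A}^{\alpha J}\cap\overline{B}^{\alpha J}$, which follows because the right-hand side is clopen and its trace on the dense set $J$ equals $A\cap B$; and (c) the dichotomy $p\in\overline{B}^{\alpha J}$ or $p\in\overline{J\setminus B}^{\alpha J}$, forced by $\overline{B}^{\alpha J}\cup\overline{J\setminus B}^{\alpha J}=\overline{J}^{\alpha J}=\alpha J$. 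Once these are in hand, $\mc{U}$ is visibly an ultrafilter on $J$ containing $\mc{F}$, yielding $\mathbf{UFT}$.
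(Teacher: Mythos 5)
Your proof is correct, and on the cycle $(i)\Rightarrow(ii)\Rightarrow(iii)\Rightarrow(iv)\Rightarrow(i)$ it coincides with the paper's argument (the paper defers $(iii)\Rightarrow(iv)$ to ``a similar argument as in the proof to Theorem 10.12 of \cite{PW}'', which is exactly your evaluation-map retraction $e:Y\to 2^{J}$, recorded separately as Proposition 3.17). The second cycle is organized a little differently --- the paper proves $(ii)\Rightarrow(vi)$, notes $(v)\Leftrightarrow(vi)$, and then $(vi)\Rightarrow(vii)$, whereas you go $(i)\Rightarrow(v)\Rightarrow(vi)\Rightarrow(vii)$ --- but the substantive divergence is at $(vii)\Rightarrow(i)$. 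There the paper extends an arbitrary $f\in C^{\ast}(X)$ over $\alpha X$ via Taimanov's theorem (Theorem 5.15 of \cite{PW}) and then invokes Theorem 3.10 to conclude that the Stone space $\mathcal{S}(X)$ is compact, hence $\mathbf{UFT}$; you instead extend a given filter on $J$ directly to an ultrafilter by choosing $p\in\bigcap_{A\in\mathcal{F}}\mathrm{cl}_{\alpha J}A$ and setting $\mathcal{U}=\{B\subseteq J: p\in\mathrm{cl}_{\alpha J}B\}$. Your route is more elementary and self-contained (no appeal to Taimanov or to the Wallman machinery); it is in effect the specialization to discrete spaces of the paper's own Theorem 3.21, and the paper observes in Remark 3.22(b) that this yields an alternative proof of $(vii)\Leftrightarrow(i)$. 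One small repair is needed in your point (a): the identity $\mathrm{cl}_{\alpha J}(B)\cap J=B$ does not follow from the clopenness of $\mathrm{cl}_{\alpha J}(J\setminus B)$ --- that reasoning is circular, since a priori $\mathrm{cl}_{\alpha J}(B)$ and $\mathrm{cl}_{\alpha J}(J\setminus B)$ could meet inside $J$. It follows instead from the fact that a compactification embeds $J$ homeomorphically, so each $x\in J$ has an open neighbourhood $U$ in $\alpha J$ with $U\cap J=\{x\}$, whence $x\notin\mathrm{cl}_{\alpha J}(B)$ whenever $x\notin B$; with (a) so secured, your points (b) and (c) go through exactly as written.
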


\begin{proof} That (i) implies (ii) follows  Theorem 4.70 of \cite{PW} and from our Theorem 3.1. It is obvious that (ii) implies (iii). When we replace the unit interval $[0, 1]$ by the two-point discrete space $\{ 0,1\}$ and use similar argument as in the proof to Theorem 10.12 in \cite{PW}, we can show that (iii) implies (iv). In view of Theorem 4.70 of \cite{Her}, (iv) and (i) are equivalent. Of course, (v) and (vi) are equivalent,  (ii) implies (vi) and (vi) implies (vii). Suppose that (vii) holds. Let $X$ be a non-empty discrete space and let $f\in C^{\ast}(X)$. Consider any pair $C, D$ of disjoint closed subsets of $\mathbb{R}$. Put  $A=f^{-1}(C)$ and $B=f^{-1}(D)$. Let $\alpha X$ be a compactification of $X$ such that every subset of $X$ has a clopen closure in $\alpha X$. Then the sets $A, B$ must have disjoint closures in $\alpha X$.  It follows from Theorem 5.15 of \cite{PW} that $f$ is continuously extendable over $\alpha X$. It follows from Theorem 3.10 that the Stone space $\mc{S}(X)=\mc{W}(X, \mc{Z}(X))$ is compact. Hence (vii) implies (i) (cf. Remark 2.9 of \cite{PW}).
\end{proof}

By applying our Theorem 3.1 and the proof to Theorem 10.12 of \cite{PW}, we
can immediately deduce the following:

\begin{prop}
$\mathbf{[ZF]}$ Let $J$ be an infinite set. Then the Cantor cube $2^J$ is
compact if and only if there exists a Hausdorff compactification $Y$ of $2^J$
such that, for each $j\in J$, the projection $\pi_j:2^J\to\{0, 1\}$ is
continuously extendable over Y. Similarly, the Tychonoff cube $[0, 1]^J$ is
compact if and only if there exists a Hausdorff compactification $Z$ of $[0,
1]^J$ such that, for each $j\in J$, the projection $\pi_j:[0, 1]^J\to [0, 1]$ is
continuously extendable over $Z$.
\end{prop}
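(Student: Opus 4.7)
The proposition is a direct reformulation of Theorem 3.1 in the special case where the evaluation map of a natural family of functions is nothing but the ambient inclusion. I treat the Cantor cube case in detail; the Tychonoff cube case will be identical up to the replacement of $\{0,1\}$ by $[0,1]$.

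Put $X = 2^J$ and $F = \{\pi_j : j \in J\}$. Each projection is continuous and takes values in $\{0,1\}$, so $F \subseteq C^{\ast}(X)$. To verify $F \in \mathcal{E}(X)$, identify the index set $F$ with $J$ and observe that the evaluation map $e_F : X \to \mathbb{R}^F$ coincides with the natural inclusion $2^J \hookrightarrow \mathbb{R}^J$. This inclusion is a homeomorphic embedding because the product topology on $2^J$ is, by definition, inherited from $\mathbb{R}^J$. Moreover $\{0,1\}$ is closed in $\mathbb{R}$, so $e_F(X) = \{0,1\}^J$ is closed in $\mathbb{R}^J$; hence $e_F X = e_F(X)$ and this subspace of $\mathbb{R}^F$ is homeomorphic to $X$ via $e_F$.

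Now I apply Theorem 3.1. If $2^J$ is compact then $Y = 2^J$ is a Hausdorff compactification of itself over which each $\pi_j$ extends trivially, which handles one direction. Conversely, suppose some Hausdorff compactification $Y$ of $2^J$ supports a continuous extension of every $\pi_j$; then $F \subseteq C_Y(X)$, which is precisely condition (ii) of Theorem 3.1 (recall that the compactification in (ii) is only required to be a compactification, so a Hausdorff one qualifies \emph{a fortiori}). Theorem 3.1 then gives that $e_F X$ is compact, and since $X \cong e_F X$, the space $2^J$ is compact.

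For the Tychonoff cube, set $X = [0,1]^J$ and again $F = \{\pi_j : j \in J\} \subseteq C^{\ast}(X)$. The evaluation map $e_F$ is once more the natural inclusion $[0,1]^J \hookrightarrow \mathbb{R}^J$, which is a homeomorphic embedding, and since $[0,1]^J$ is closed in $\mathbb{R}^J$, the closure $e_F X$ coincides with $e_F(X)$ and is homeomorphic to $X$. The same two-sided application of Theorem 3.1 yields the stated equivalence. I do not foresee any genuine obstacle: the two substantive points are the identification of $e_F$ with the ambient inclusion, and the closedness of $\{0,1\}^J$ and $[0,1]^J$ in $\mathbb{R}^J$, both of which are essentially immediate from the product topology.
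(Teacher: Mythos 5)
Your proof is correct and follows essentially the same route as the paper, which derives the proposition directly from Theorem 3.1 after observing (as in the proof of Theorem 10.12 of \cite{PW}) that the evaluation map of the family of projections is the identity embedding of the cube into $\mathbb{R}^J$ and that the cube is closed there. You merely fill in the details that the paper leaves as ``immediate.''
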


\begin{thm}
The following sentences are relatively consistent with $\mathbf{ZF}$:

\begin{enumerate}
\item[(i)] There exists a 0-dimensional $T_1$-space which has no
compactification $\alpha X$ such that every clopen subset of $X$ has a
clopen closure in $\alpha X$.

\item[(ii)] There exists a metrizable 0-dimensional space which has no
compactification $\alpha X$ such that every clopen subset of $X$ has a
clopen closure in $\alpha X$.
\end{enumerate}
\end{thm}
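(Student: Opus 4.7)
The plan is to derive both (i) and (ii) simultaneously by contrapositive from Theorem~3.16. By clause (vii) of that theorem, the statement ``every discrete space $X$ has a compactification $\alpha X$ such that, for each subset $A$ of $X$, the closure of $A$ in $\alpha X$ is clopen in $\alpha X$'' is equivalent to $\mathbf{UFT}$. Since $\mathbf{UFT}$ is independent of $\mathbf{ZF}$ (e.g.\ it fails in the basic Cohen model; see Form~14~A in \cite{HR}), I would fix a model $\mc{M}$ of $\mathbf{ZF}+\neg\mathbf{UFT}$. In $\mc{M}$, the negation of clause~(vii) immediately produces a discrete space $X$ that admits \emph{no} compactification $\alpha X$ in which every subset of $X$ has a clopen closure.

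Next I would observe that any discrete space is metrizable (via the discrete metric), is $0$-dimensional (singletons form a clopen base), and is $T_1$. Moreover, in a discrete space every subset is clopen, so the condition ``every clopen subset of $X$ has a clopen closure in $\alpha X$'' appearing in (i) and (ii) coincides, for discrete $X$, with the condition ``every subset of $X$ has a clopen closure in $\alpha X$'' appearing in clause~(vii) of Theorem~3.16. Consequently, the witness $X$ provided above verifies simultaneously both (i) and (ii) in the model $\mc{M}$, which establishes the desired relative consistency.

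The essentially only thing to be careful about is the convention, recalled in the introduction, that a compactification is not required to be Hausdorff; this ensures that the universal statement in clause~(vii) of Theorem~3.16 quantifies over exactly the same class of compactifications as the existential statements in (i) and (ii), so that passing to the contrapositive gives precisely the counterexample required. Once Theorem~3.16 is available, no further construction is needed, and there is no substantial obstacle to overcome.
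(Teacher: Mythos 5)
Your argument is correct, but it takes a genuinely different route from the paper's. You derive both clauses from the equivalence (i)$\Leftrightarrow$(vii) of Theorem 3.16: in any model of $\mathbf{ZF}+\neg\mathbf{UFT}$, the failure of clause (vii) hands you a discrete space with no compactification in which all (automatically clopen) subsets have clopen closures, and a discrete space is trivially $T_1$, $0$-dimensional and metrizable. The paper instead takes as witness a metrizable non-compact Cantor cube $2^J$, whose existence in the model $\mathcal{M}7$ of \cite{HR} is a result of \cite{W2}, and then combines Theorem 3.1 with Proposition 3.17: a compactification of $2^J$ in which clopen sets have clopen closures would let each projection $\pi_j$ extend continuously, forcing $2^J$ to be compact. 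Your approach is more economical — it needs nothing beyond Theorem 3.16 and the standard fact that $\mathbf{UFT}$ is independent of $\mathbf{ZF}$ — while the paper's buys a non-discrete, structurally richer witness and ties the phenomenon to the metrizability of Cantor cubes studied in \cite{W2}. One factual correction: $\mathbf{UFT}$ does \emph{not} fail in the basic Cohen model $\mathcal{M}1$ of \cite{HR}; by the Halpern--L\'evy theorem the Boolean Prime Ideal Theorem holds there. You should instead point to a model such as Feferman's model $\mathcal{M}2$ (where there are no free ultrafilters on $\omega$, so $\mathbf{UFT}$ fails) or $\mathcal{M}7$; this slip does not affect the validity of the argument, since the relative consistency of $\neg\mathbf{UFT}$ with $\mathbf{ZF}$ is all you use.
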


\begin{proof} It was shown in \cite{W2} that in some models of $\mathbf{ZF}$ (for instance,  in the model $\mc{M}7$ of \cite{HR})  there exists a metrizable non-compact Cantor cube.  Let  $X$ be a metrizable non-compact Cantor cube. Of course, $X$ is 0-dimensional. It follows from Theorem 3.1 and Proposition 3.17 that $X$ does not have a compactification $\alpha X$ such that  every clopen subset of $X$ has a clopen closure in $\alpha X$. Hence both (i) and (ii) are relatively consistent with $\mathbf{ZF}$. 
\end{proof} 

\begin{rem}
It is known that in the model $\mathcal{M}7$ of \cite{HR}, the Cantor cube $2^{\mathbb{R%
}}$ is not compact (see \cite{K1}). Hence, in view of Proposition 3.17 and
Theorem 3.1, $2^{\mathbb{R}}$ is a 0-dimensional $T_1$-space which has no
compactifications in $\mathcal{M}7$ in which closures of clopen sets in $%
2^{\mathbb{R}}$ are clopen. However, by Theorem 2.2 of \cite{W2}, the space $%
2^{\mathbb{R}}$ cannot be metrizable.
\end{rem}

\begin{defi}
Let $X$ be a topological space and $Clop(X)$ the collection of all clopen
subsets of $X$. Every filter in the family $Clop(X)$ will be called a \emph{clopen
filter} of $X$. Every ultrafilter in $Clop(X)$ will be called a \emph{clopen
ultrafilter} of $X$.
\end{defi}

In the light of Theorem 3.18, it may be useful to have a deeper look at
clopen filters.

\begin{thm}
$\mathbf{[ZF]}$ Let $X$ be a topological space. Suppose that $\alpha X$ is a
compactification of $X$ such that every set $A\in Clop(X)$ has a clopen
closure in $\alpha X$. Then every clopen filter of $X$ is included in a
clopen ultrafilter of $X$.
\end{thm}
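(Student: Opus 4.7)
The plan is to use the compactification $\alpha X$ as an auxiliary space: given a clopen filter $\mc{F}$ on $X$, locate a point $p \in \alpha X$ which is "in the closure" of every element of $\mc{F}$, then read off a clopen ultrafilter from $p$. First, I would observe that for each $A \in \mc{F}$ the hypothesis gives $\text{cl}_{\alpha X} A \in Clop(\alpha X)$, and the family $\{\text{cl}_{\alpha X} A : A \in \mc{F}\}$ has the finite intersection property because $\mc{F}$ is a filter in $Clop(X)$ and $\text{cl}_{\alpha X}(A_1 \cap \dots \cap A_n)$ is non-empty and contained in the intersection of the individual closures. By compactness of $\alpha X$, I pick $p \in \bigcap_{A \in \mc{F}} \text{cl}_{\alpha X} A$, and set
\[
\mc{U} = \{B \in Clop(X) : p \in \text{cl}_{\alpha X} B\}.
\]
By construction $\mc{F} \subseteq \mc{U}$, and $\emptyset \notin \mc{U}$ since $p \notin \emptyset = \text{cl}_{\alpha X}\emptyset$. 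Upward closure in $Clop(X)$ is immediate from monotonicity of closure.

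The delicate point is closure under binary intersection, which reduces to the identity
\[
\text{cl}_{\alpha X}(B_1 \cap B_2) = \text{cl}_{\alpha X} B_1 \cap \text{cl}_{\alpha X} B_2 \qquad \text{for } B_1, B_2 \in Clop(X).
\]
To establish this, I would first verify the preliminary fact that for every $B \in Clop(X)$ one has $X \cap \text{cl}_{\alpha X} B = B$: indeed, $X \setminus B$ is open in the subspace $X$, hence of the form $X \cap V$ for some $V$ open in $\alpha X$, and $V$ witnesses that no point of $X \setminus B$ can belong to $\text{cl}_{\alpha X} B$. Next, setting $W = \text{cl}_{\alpha X} B_1 \cap \text{cl}_{\alpha X} B_2$, the set $W$ is clopen in $\alpha X$ with $W \cap X = B_1 \cap B_2$. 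Since $X$ is dense in $\alpha X$ and $W$ is open, $W \cap X$ is dense in $W$, and since $W$ is closed, $\text{cl}_{\alpha X}(W \cap X) = W$, which is exactly the displayed identity. Applying it to $B_1, B_2 \in \mc{U}$ yields $p \in \text{cl}_{\alpha X}(B_1 \cap B_2)$, so $B_1 \cap B_2 \in \mc{U}$ and $\mc{U}$ is a filter in $Clop(X)$.

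Finally, for maximality I would use the characterization that a filter $\mc{U}$ in $Clop(X)$ is an ultrafilter if and only if for every $B \in Clop(X)$ either $B \in \mc{U}$ or $X \setminus B \in \mc{U}$. Since $B \cup (X \setminus B) = X$ is dense in $\alpha X$, the closed set $\text{cl}_{\alpha X} B \cup \text{cl}_{\alpha X}(X \setminus B)$ contains a dense subset of $\alpha X$ and hence equals $\alpha X$; thus $p$ lies in at least one of these two clopen sets, giving $B \in \mc{U}$ or $X \setminus B \in \mc{U}$, as required.

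I anticipate the main obstacle to be the lemma that closures of clopen subsets of $X$ in $\alpha X$ meet $X$ precisely in themselves and interact multiplicatively with finite intersections; this is where density of $X$ in $\alpha X$ and the clopenness of the closures (guaranteed by the hypothesis on $\alpha X$) both enter crucially. Everything else is formal manipulation that goes through in $\mathbf{ZF}$ without any choice.
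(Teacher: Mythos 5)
Your proposal is correct and follows essentially the same route as the paper: compactness yields a point $p$ in the intersection of the closures of the filter elements, and the clopen ultrafilter is read off as $\{B\in Clop(X): p\in \text{cl}_{\alpha X}B\}$. The only (minor) divergence is in verifying closure under intersection, where you prove the identity $\text{cl}_{\alpha X}(B_1\cap B_2)=\text{cl}_{\alpha X}B_1\cap \text{cl}_{\alpha X}B_2$ directly from density and clopenness, whereas the paper argues by contradiction via the dichotomy that exactly one of $A$, $X\setminus A$ lies in the trace filter; both verifications are valid in $\mathbf{ZF}$.
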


\begin{proof} Let $\mc{H}$ be a clopen filter of $X$. It follows from the compactness of $\alpha X$ that the set $K=\bigcap_{H\in\mc{H}}\text{cl}_{\alpha X}(H)$ is non-empty. Let us fix $x\in K$ and put 
$$\mc{F}=\{ F\in Clop(X): x\in \text{cl}_{\alpha X}F\}.$$
We show that $\mc{F}$ is a clopen ultrafilter of $X$ such that $\mc{H}\subseteq\mc{F}$. Clearly, $\emptyset\notin\mc{F}$ and $\mc{H}\subseteq \mc{F}$.  For each $A\in Clop(X)$, we have $\alpha X=\text{cl}_{\alpha X}(A)\cup\text{cl}_{\alpha X}(X\setminus A)$. Hence, since $X$ is dense in $\alpha X$, it follows from our hypothesis that, for each $A\in Clop(X)$, the point $x$ belongs to exactly one of the sets $\text{cl}_{\alpha X}(A)$ and $\text{cl}_{\alpha X}(X\setminus A)$, so exactly one of $A$ and $X\setminus A$ belongs to $\mc{F}$. 

Consider any $A,B\in\mc{F}$. We show that $A\cap B\in\mc{F}$. Assume the contrary that $A\cap B\notin\mc{F}$. Then $(X\setminus A)\cup(X\setminus B)\in\mc{F}$ and, consequently, $x\in\text{cl}_{\alpha X}(X\setminus A)$ or $x\in\text{cl}_{\alpha X}(X\setminus B)$. This implies that either $A\notin\mc{F}$ or $B\notin\mc{F}$ -a contradiction. Hence $\mc{F}$ is closed under finite intersections. Of course, for any $A\in\mc{F}$  and $B\in Clop(X)$ such that $A\subseteq B$, we have $B\in\mc{F}$. All this taken together implies that $\mc{F}$ is a clopen ultrafilter of $X$ such that $\mc{H}\subseteq\mc{F}$.
\end{proof}

\begin{rem}
(a) The requirement from Theorem 3.21 that if $A\in Clop(X)$, then $\text{cl}%
_{\alpha X}(A)\in Clop(\alpha X)$, cannot be dropped out even when $X$ is
discrete. Indeed, if $\mathcal{M}$ is a $\mathbf{ZF}$-model such that $%
\omega $ has no free ultrafilters in $\mc{M}$, e.g. Feferman's Model $\mc{M}2$ or Solovay's Model $\mathcal{M}%
5(\aleph) $ in \cite{HR}, then the clopen filter $\mathcal{H}$ of all
cofinite subsets of the discrete space $\omega$ does not extend to a clopen
ultrafilter of $\omega$ in $\mathcal{M}$. The Alexandroff compactification $%
\alpha_a\omega$ of the discrete space $\omega$ is a Hausdorff
compactification of $\omega$ such that, for each infinite subset $A$ of $%
\omega$, if $\omega\setminus A$ is infinite, the closure of $A$ in $%
\alpha_a\omega$ is not clopen.

(b) That conditions (i) and (vii) of Theorem 3.16 are equivalent can be
proved by applying Theorem 3.21. Namely, for a discrete space $X$, the Stone
space $\mathcal{S}(X)$ is compact if and only if every filter in the power
set $\mathcal{P}(X)$ is contained in an ultrafilter in $\mathcal{P}(X)$. By
Theorem 3.21, condition (vii) of Theorem 3.16 implies that $S(X)$ is compact
for every discrete space $X$. Of course, $\mathcal{S}(X)$ is compact for
every discrete space $X$ if and only if $\mathbf{UFT}$ holds.

(c) It was shown in \cite{K2} that the following are equivalent:\newline
(i)\ every clopen filter of $2^{\mathbb{R}}$ extends to a clopen ultrafilter
of $2^{\mathbb{R}}$ (Form 139 in \cite{HR}). \newline
(ii)\ $\mathbf{BPI}(\omega )$ : every filter on $\omega $ extends to an
ultrafilter (Form 225 in \cite{HR}).\newline
(iii) the Cantor cube $2^{\mathbb{R}}$ is compact.\\ Hence, $2^{\mathbb{R}}$
is not compact if and only if  there exists a clopen filter $\mathcal{H}$ of 
$2^{\mathbb{R}}$ which does not extend to a clopen ultrafilter of $2^{%
\mathbb{R}}$. Thus, it follows directly from Theorems 3.1, 3.21 and
Proposition 3.17 that the Cantor cube $2^{\mathbb{R}}$ does not have a
compactification in which closures of clopen sets of $2^{\mathbb{R}}$ are
clopen if and only if there exists a clopen filter of $2^{\mathbb{R}}$ which
does not extend to a clopen ultrafilter of $2^{\mathbb{R}}$.
\end{rem}

\begin{rem}
(a) Let $X$ be a Tychonoff space and $\alpha X$ be a Hausdorff
compactification of $X$. It is known that if $\alpha X$ is a \v{C}ech-Stone
compactification of $X$, then the following condition is satisfied:

\begin{description}
\item[($\ast $) ] for every clopen set $A$ of $X$, $\text{cl}_{\alpha X}(A)$
is a clopen set of $\alpha X$.
\end{description}

On the other hand, the interval $[0, 1]$ is a non-\v{C}ech-Stone
compactification of the open interval $(0, 1)$ with the usual topology, while $(0, 1)$ satisfies trivially $(\ast
)$. Thus, $(\ast )$ does not imply $\alpha X$ is a \v{C}ech-Stone
compactification of $X$.\smallskip 

(b)  For each $n\in\omega$, let $\mc{F}_n=\{A\subseteq \omega: n\in A\}$. Since the set $W=\{\mathcal{F}_{n}:n\in \omega \}$
is dense in $\mathcal{S}(\omega )$, it follows that any compactification of $%
\mathcal{S}(\omega )$ is also a compactification of the discrete space $%
\omega $. In particular, if $\mathcal{S}(\omega )$ is compact, then it is
the \v{C}ech-Stone compactification of $\omega $. Clearly, if in a $%
\mathbf{ZF}$-model $\mc{M}$ there do not exist free ultrafilters in the collection $\mc{P}(\omega)$, then $%
\mathcal{S}(\omega )=W$ is discrete in $\mathcal{M}$, and the Alexandroff
compactification $\alpha _{a}W$ is a Hausdorff compactification of $W$ but, in view of Theorem 2.27, 
$\alpha _{a}W$ is not a \v{C}ech-Stone compactification of $W$ because $W$ is
not amorphous. In the model $\mathcal{N}[\Gamma ]$ of \cite{HKT}, $\mathcal{S%
}(\omega )$ is a dense-in-itself Tychonoff space which is easily seen not to
be locally compact. Hence, the Alexandroff compactification $\alpha _{a}%
\mathcal{S}(\omega )$ of $\mathcal{S}(\omega )$, in contrast to that of $%
\omega $, is not Hausdorff in $\mathcal{N}[\Gamma ]$. Furthermore, since $%
\mathcal{S}(\omega )$ is not compact in $\mathcal{N}[\Gamma ]$, it follows
from Theorem 3.21 that, for every Hausdorff compactification $Y$ of $%
\mathcal{S}(\omega )$ in $\mathcal{N}[\Gamma ]$, there exists a clopen set $A
$ of $\mathcal{S}(\omega )$ such that c$\text{l}_{Y}(A)\cap \text{cl}_{Y}(%
\mathcal{S}(\omega )\setminus A)\neq \emptyset $. We do not know whether the
(ultrafilter compact) space $\mathcal{S}(\omega )$ (see, e.g., \cite{HHK})
has a Hausdorff compactification in \medskip $\mathcal{N}[\Gamma ]$.
\end{rem}

\begin{prop} $\mathbf{[ZF]}$ If a topological space $X$ has a Hausdorff compactification and there exists a set $\mc{K}$ of Hausdorff compactifications of $X$ such that every Hausdorff compactification of $X$ is equivalent with a member of $\mc{K}$ and, moreover, if the space $\prod_{\gamma X\in\mc{K}} \gamma X$ is compact, then $X$ has its \v Cech-Stone compactification.
\end{prop}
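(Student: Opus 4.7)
The natural candidate for $\beta X$ is the diagonal compactification. Let $P=\prod_{\gamma X\in\mc{K}}\gamma X$. By hypothesis $P$ is compact, and it is Hausdorff since each $\gamma X$ is. Define the evaluation $\Delta:X\to P$ by $\Delta(x)(\gamma X)=\gamma(x)$ for each $\gamma X\in\mc{K}$, and put $\beta X=\text{cl}_{P}(\Delta(X))$. As a closed subspace of the compact Hausdorff space $P$, the space $\beta X$ is compact Hausdorff, and $\Delta(X)$ is dense in $\beta X$ by construction.

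The first step is to check that $\langle\beta X,\Delta\rangle$ is genuinely a compactification of $X$, i.e.\ that $\Delta$ is a homeomorphic embedding. For every $\gamma X\in\mc{K}$ one has $\pi_{\gamma X}\circ\Delta=\gamma$, where $\pi_{\gamma X}:P\to\gamma X$ is the projection. Since, by assumption, $X$ admits at least one Hausdorff compactification $\alpha_0 X$, and $\alpha_0 X$ is equivalent with some $\gamma_0 X\in\mc{K}$, the map $\gamma_0:X\to\gamma_0 X$ is a homeomorphic embedding; therefore $\Delta$ is a homeomorphic embedding as well. This is exactly the point where the hypothesis that $X$ already has at least one Hausdorff compactification is used.

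For the universal property, fix a compact Hausdorff space $K$ and a continuous map $f:X\to K$. The plan is to manufacture a Hausdorff compactification of $X$ on which $f$ extends, and then use $\mc{K}$ to transport the extension back to $\beta X$. Choose a Hausdorff compactification $\alpha_0 X$ of $X$, consider the map $\varphi:X\to \alpha_0 X\times K$ given by $\varphi(x)=\langle\alpha_0(x),f(x)\rangle$, and put $\delta X=\text{cl}_{\alpha_0 X\times K}(\varphi(X))$. The first coordinate projection shows that $\varphi$ is a homeomorphic embedding, so $\delta X$ is a Hausdorff compactification of $X$; restricting the second coordinate projection to $\delta X$ gives a continuous $g:\delta X\to K$ with $g\circ\varphi=f$. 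By the hypothesis on $\mc{K}$, the compactification $\delta X$ is equivalent with some $\gamma_1 X\in\mc{K}$, so $f$ extends continuously to a map $\tilde f_1:\gamma_1 X\to K$. Composing with $\pi_{\gamma_1 X}\!\!\upharpoonright\beta X:\beta X\to\gamma_1 X$ yields a continuous $\tilde f=\tilde f_1\circ\pi_{\gamma_1 X}:\beta X\to K$ with $\tilde f\circ\Delta=f$, as required.

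The one subtlety to keep an eye on is purely set-theoretic: the construction uses a single arbitrary Hausdorff compactification $\alpha_0 X$ of $X$ (whose existence is given) and, for each particular $f$, produces a single $\gamma_1 X\in\mc{K}$ together with a single extension $\tilde f_1$; no simultaneous choice over parameters is involved, so the argument goes through in $\mathbf{ZF}$. Once $\tilde f$ exists, uniqueness of the extension follows from the density of $\Delta(X)$ in $\beta X$ and the Hausdorffness of $K$. This shows that $\langle\beta X,\Delta\rangle$ satisfies Definition~1.3, so it is a \v Cech-Stone compactification of $X$.
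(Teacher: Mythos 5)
Your proof is correct and follows essentially the same route as the paper: the paper also takes the closure of the image of the evaluation map $e(x)(\gamma X)=\gamma(x)$ in the compact product $\prod_{\gamma X\in\mc{K}}\gamma X$ and then cites the argument of Theorem 3.5.9 of Engelking for the universal property, which is exactly the graph-closure argument in $\alpha_0 X\times K$ that you spell out. Your version merely makes explicit the details (embedding, transport of the extension through a member of $\mc{K}$, avoidance of choice) that the paper leaves to the reference.
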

\begin{proof} Let $\mc{K}$ be a set of Hausdorff compactifications of  $X$ such that every Hausdorff compactification of $X$ is equivalent with a member of $\mc{K}$. Put $Y=\prod_{\gamma X\in\mc{K}}\gamma X$ and assume that $Y$ is compact. Let $e: X\to Y$ be defined by: $e(x)(\gamma X)=\gamma (x)$ for all $x\in X$ and $\gamma X\in\mc{K}$. Denote by $e X$ the closure in $Y$ of $e(X)$. Then $eX$ is a Hausdorff compactification of $X$. In much the same way, as in the proof to Theorem 3.5.9 of \cite{En}, one can check that $e X$ is the \v Cech-Stone compactification of $X$.
\end{proof}

\begin{prop} $\mathbf{[ZF]}$ Suppose that a topological space $X$ has its \v Cech-Stone compactification. Then there exists a set $\mc{K}$ of Hausdorff compactifications of $X$ such that every Hausdorff compactification of $X$ is equivalent with a member of $\mc{K}$.
\end{prop}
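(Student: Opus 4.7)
The plan is to use Remark 1.4 to realize every Hausdorff compactification of $X$ as a quotient of $\beta X$, and then parametrize these quotients by equivalence relations on the set $\beta X$, which form a genuine set (a subset of $\mc{P}(\beta X \times \beta X)$) in $\mathbf{ZF}$.

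First I would fix a Hausdorff compactification $\alpha X$ of $X$. By Remark 1.4, there is a continuous map $f_\alpha : \beta X \to \alpha X$ with $f_\alpha \circ \beta = \alpha$. Since $\beta X$ is compact, $f_\alpha(\beta X)$ is compact, hence closed in $\alpha X$; and it contains the dense set $\alpha(X)$, so $f_\alpha$ is surjective. Uniqueness of $f_\alpha$ follows from the density of $\beta(X)$ in $\beta X$ together with $\alpha X$ being Hausdorff. Let $R_\alpha = \{\langle p,q\rangle \in \beta X \times \beta X : f_\alpha(p) = f_\alpha(q)\}$ and let $q_{R_\alpha} : \beta X \to \beta X / R_\alpha$ be the quotient map, where $\beta X / R_\alpha$ carries the quotient topology. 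Then $f_\alpha$ factors through $\beta X/R_\alpha$ as a continuous bijection $\bar f_\alpha : \beta X / R_\alpha \to \alpha X$. Being a continuous bijection from a compact space onto a Hausdorff space, $\bar f_\alpha$ is a homeomorphism, and in particular $\beta X / R_\alpha$ is a Hausdorff compactification of $X$ (with embedding $q_{R_\alpha} \circ \beta$) equivalent to $\alpha X$.

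Next I would define the set
\[
\mc{R} = \{R \subseteq \beta X \times \beta X : R \text{ is an equivalence relation such that } \langle \beta X / R,\, q_R \circ \beta \rangle \text{ is a Hausdorff compactification of } X\},
\]
where $q_R : \beta X \to \beta X/R$ denotes the quotient map and $\beta X/R$ carries the quotient topology. Since $\mc{R} \subseteq \mc{P}(\beta X \times \beta X)$, separation gives that $\mc{R}$ is a set, and therefore
\[
\mc{K} = \{\langle \beta X / R,\, q_R \circ \beta\rangle : R \in \mc{R}\}
\]
is a set. The previous paragraph shows that every Hausdorff compactification $\alpha X$ of $X$ is equivalent to the member of $\mc{K}$ determined by $R_\alpha \in \mc{R}$.

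The only real obstacle is set-theoretic bookkeeping: one has to check in $\mathbf{ZF}$ that the property ``$\langle \beta X/R, q_R \circ \beta\rangle$ is a Hausdorff compactification of $X$'' is a legitimate defining condition carving out a subset of $\mc{P}(\beta X \times \beta X)$, which amounts to noting that the quotient topology on $\beta X/R$, the map $q_R \circ \beta$, and the properties of being compact, Hausdorff, and a homeomorphic dense embedding are all absolute constructions from $R$ and $\beta X$. No choice is needed because $\beta X$ is already given and the map $f_\alpha$ is unique.
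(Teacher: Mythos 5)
Your proof is correct and takes essentially the same route as the paper's: both use Remark 1.4 to realize every Hausdorff compactification of $X$ as a quotient of $\beta X$ and then observe that these quotients are parametrized by elements of a power set, so they form a set by separation and replacement. The only (cosmetic) difference is that the paper indexes the quotients by decompositions of the remainder $\beta X\setminus X$ into pairwise disjoint closed sets, whereas you index them by equivalence relations on all of $\beta X$.
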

\begin{proof} Let us consider the collection $\mc{R}$ of all collections $\mc{D}$ of pairwise disjoint closed subsets of $\beta X$ such that each $D\in\mc{D}$ is non-empty,  $\beta X\setminus X=\bigcup_{D\in\mc{D}}D$ and the quotient  space $r_{\mc{D}} X$ obtained from $\beta X$ by identifying each set $D\in\mc{D}$ with a point is a Hausdorff compactification of $X$. It follows from $\mathbf{ZF}$ that the class $\mc{K}=\{r_{\mc{D}}X: \mc{D}\in\mc{R}\}$ is a set. Of course, every Hausdorff compatification of $X$ is equivalent with a member of $\mc{K}$. 
\end{proof}

\begin{rem}
In $\mathbf{ZFC}$, for a non-empty  Tychonoff space $X$, the class $\mathcal{%
K}=\{e_{F}X:F\in \mathcal{E}(X)\}$ is a set of Hausdorff compactifications
of $X$ such that every Hausdorff compactification of $X$ is equivalent with a
member of $\mathcal{K}$. However, perhaps, in a model of $\mathbf{ZF}$, it
may happen that every class $\mathcal{K}$ of Hausdorff compactifications of a space $%
X$ such that every Hausdorff compactification of $X$ is equivalent with a
member of $\mathcal{K}$ is a proper class.
\end{rem}

We include a careful proof to the following theorem for completeness: 

\begin{thm} $\mathbf{[ZF]}$ The following conditions are equivalent:
\begin{enumerate}
\item[(i)] $\mathbf{UFT}$;
\item[(ii)] for every topological space $X$ it holds true that if $X$ has a Hausdorff compactification, then $X$  has the \v Cech-Stone compactification;
\item[(iii)] every Tychonoff space has its \v Cech-Stone compactification.
\end{enumerate}
\end{thm}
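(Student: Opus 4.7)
The plan is to prove the cycle $(iii) \Rightarrow (i)$, $(ii) \Rightarrow (i)$, $(i) \Rightarrow (iii)$, and $(i) \Rightarrow (ii)$, leveraging the characterization of $\mathbf{UFT}$ from Theorem 3.16 together with Taimanov's theorem (Theorem 5.15 of \cite{PW}).

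The two implications ending at $\mathbf{UFT}$ are short. For $(iii) \Rightarrow (i)$, Theorem 3.8 says that the existence of $\beta X$ forces the existence of $\beta^{f}X$; hence $(iii)$ makes every Tychonoff space admit its functional \v Cech-Stone compactification, and the equivalence of (i) and (ii) in Theorem 3.16 delivers $\mathbf{UFT}$. For $(ii) \Rightarrow (i)$, I would specialize $(ii)$ to an arbitrary discrete space $X$: since $X$ is locally compact Hausdorff, its Alexandroff compactification $\alpha_{a}X$ is a Hausdorff compactification, and $(ii)$ supplies $\beta X$. For each $A \subseteq X$ the characteristic function $\chi_{A} \colon X \to \{0,1\}$ is continuous, and its continuous extension $\tilde{\chi}_{A} \colon \beta X \to \{0,1\}$ renders $\mathrm{cl}_{\beta X}(A) = \tilde{\chi}_{A}^{-1}(1)$ clopen in $\beta X$. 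Condition (vii) of Theorem 3.16 is thereby satisfied, so $\mathbf{UFT}$ follows.

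For $(i) \Rightarrow (iii)$, assume $\mathbf{UFT}$ and let $X$ be a non-empty Tychonoff space; Theorem 3.16 supplies $\beta^{f}X$. To verify the universal property of Definition 1.3, fix a compact Hausdorff $K$ and a continuous $f \colon X \to K$. The pivotal auxiliary lemma is that under $\mathbf{UFT}$ every compact Hausdorff space is Tychonoff. Granting this, Corollary 2.11 gives $\mathbf{UL}(K)$, so for any disjoint closed $A, B \subseteq K$ there is $h \in C^{\ast}(K)$ with $A \subseteq h^{-1}(0)$ and $B \subseteq h^{-1}(1)$; then $h \circ f \in C^{\ast}(X)$ functionally separates $f^{-1}(A)$ from $f^{-1}(B)$, so these sets have disjoint closures in $\beta^{f}X = e_{F}X$ (with $F = C^{\ast}(X)$) by construction of the evaluation embedding. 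Taimanov's theorem then yields a continuous extension $\tilde{f} \colon \beta^{f}X \to K$, unique because $K$ is Hausdorff. The implication $(i) \Rightarrow (ii)$ follows at once: under $\mathbf{UFT}$, a Hausdorff compactification $\alpha X$ of $X$ is compact Hausdorff and hence Tychonoff by the auxiliary lemma, so $X \subseteq \alpha X$ is Tychonoff and $(i) \Rightarrow (iii)$ delivers $\beta X$.

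The main obstacle is the auxiliary lemma that $\mathbf{UFT}$ implies every compact Hausdorff space is Tychonoff. In $\mathbf{ZFC}$ this is routine via compact Hausdorff $\Rightarrow$ normal $\Rightarrow$ completely regular, but both steps need choice principles in $\mathbf{ZF}$; under $\mathbf{UFT}$ one obtains normality from a maximal-filter argument on closed sets and then deduces functional separation by applying the Prime Ideal Theorem to suitable Boolean subalgebras naturally attached to $K$. This lemma also clarifies why $(ii)$ does not directly imply $(iii)$: without $\mathbf{UFT}$, a Tychonoff space may admit no Hausdorff compactification at all, leaving $(ii)$ with no hypothesis to engage.
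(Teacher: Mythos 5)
Your two reductions to $\mathbf{UFT}$ are sound and essentially the paper's: $(iii)\Rightarrow(i)$ via Theorem 3.8 and Theorem 3.16(ii), and $(ii)\Rightarrow(i)$ by noting that for a discrete space the extensions of characteristic functions make closures of subsets clopen, which is Theorem 3.16(vii). The gap is in the forward direction. Your ``pivotal auxiliary lemma'' --- that $\mathbf{UFT}$ implies every compact Hausdorff space is Tychonoff --- is false, and the paper itself trades on its failure: in Mostowski's model $\mathcal{N}3$ of \cite{HR}, $\mathbf{UFT}$ (Form 14) holds, yet the result of \cite{GT} invoked in the proof of Theorem 2.21 supplies there an uncountable compact Hausdorff space $Y$ on which every continuous real function is constant, so $Y$ is compact Hausdorff but not completely regular. (Compact Hausdorff does give normal in $\mathbf{ZF}$ by a choice-free finite-subcover argument, but normal $\Rightarrow$ completely regular is Urysohn's Lemma, Form 78, which fails in $\mathcal{N}3$; no appeal to prime ideals in Boolean subalgebras of $K$ can repair this, since the counterexample lives in a model of $\mathbf{BPI}$.) Consequently your argument for $(i)\Rightarrow(iii)$ only verifies the extension property of $\beta^{f}X$ for maps into compact \emph{Tychonoff} targets --- which is exactly Proposition 3.6 --- and not for arbitrary compact Hausdorff $K$ as Definition 1.3 requires; indeed under $\mathbf{UFT}$ strange compactifications may still exist (Theorem 2.21 in $\mathcal{N}3$), and $\beta X$ must dominate them while $\beta^{f}X$ is completely regular and is only known to dominate the completely regular ones (Corollary 3.7). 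Your $(i)\Rightarrow(ii)$ breaks for the additional reason that a space with a Hausdorff compactification need not be Tychonoff, so the reduction of $(ii)$ to $(iii)$ is unavailable.

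The paper closes the gap by a different construction that avoids all separation axioms on the target. Given any space $X$ with a Hausdorff compactification, it takes the Stone space $\mathcal{S}(X)$ of the underlying discrete set; under $\mathbf{UFT}$ this is the \v Cech-Stone compactification of the discrete space, so every Hausdorff compactification of $X$ is a quotient of $\mathcal{S}(X)$, and the replacement scheme then yields a \emph{set} $\mathcal{K}$ of representatives of all Hausdorff compactifications of $X$ up to equivalence. By Theorem 4.70 of \cite{Her} the product $\prod_{\gamma X\in\mathcal{K}}\gamma X$ is compact, and Proposition 3.24 (diagonal embedding and closure) produces $\beta X$, which dominates every Hausdorff compactification, strange ones included. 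To repair your write-up you would need to replace the Taimanov/$\beta^{f}X$ route for $(i)\Rightarrow(ii)$ and $(i)\Rightarrow(iii)$ with this product-of-representatives argument, or else restrict Definition 1.3 to Tychonoff targets, which is not the theorem being proved.
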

\begin{proof} Suppose that $\langle X, \tau\rangle$ is a topological space which has a Hausdorff  compactification. Consider the Stone space $\mc{S}(X)$ of the discrete space $\langle X, \mc{P}(X)\rangle$. Assume that (i) holds. Then $\mc{S}(X)$ is the \v Cech-Stone compactification of $\langle X, \mc{P}(X)\rangle$. Consequently, for every Hausdorff compactification $\gamma \langle X, \tau\rangle$ of $\langle X, \tau\rangle$ the mapping $\text{id}_X: X\to\gamma\langle X, \tau\rangle$ is continuously extendable over $\mc{S}(X)$ to a mapping $g_{\gamma}: \mc{S}(X)\to\gamma\langle X,\tau\rangle$. We  consider the equivalence relation $\thickapprox_{\gamma}$ on $\mc{S}(X)$ defined by: $y\thickapprox_{\gamma} z$ if and only if $g_{\gamma}(y)=g_{\gamma}(z)$ for $y,z\in\mc{S}(X)$. Then the space $\gamma\langle X, \tau\rangle$ is homeomorphic with the quotient space $\mc{S}(X)/\thickapprox_{\gamma}$ (see Theorem 2.4.3 of \cite{En}). Since the class of all quotient spaces obtained from $\mc{S}(X)$ is a set, we can deduce from  the scheme of replacement (Axiom 6 on page 10 of \cite{Ku}) that there exists a set $\mc{K}$ of Hausdorff compactifications of $\langle X, \tau\rangle$ such that every Hausdorff compactification of $\langle X, \tau\rangle$ is equivalent with a member of $\mc{K}$. In the light of Proposition 3.24 and Theorem 4.70 of \cite{Her}, we infer that (i) implies (ii). We can deduce from Theorem 3.16 that (ii) implies (i) and that (iii) implies (i). In the light of Theorem 4.70 of \cite{Her},  it follows from Theorem 3.16 and Proposition 3.24 that (i) implies (iii).
\end{proof}

\begin{cor} $\mathbf{[ZF+UFT+UL]}$ Every normal $T_1$-space has its \v Cech-Stone compactification.
\end{cor}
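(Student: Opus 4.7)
The plan is to reduce to Theorem 3.27 by showing that under the given hypotheses every normal $T_{1}$-space is in fact Tychonoff, after which the ultrafilter theorem does the rest of the work.

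First I would unpack the hypothesis $\mathbf{UL}$. It asserts that $\mathbf{UL}(X)$ holds for every normal space $X$, so in our setting we may assume that, for every pair $A,B$ of disjoint closed subsets of our normal $T_{1}$-space $X$, there is a continuous function $f\colon X\to[0,1]$ with $A\subseteq f^{-1}(0)$ and $B\subseteq f^{-1}(1)$. Since $X$ is $T_{1}$, every singleton $\{x\}$ is closed; thus given any closed $A\subseteq X$ and any $x\in X\setminus A$, we may apply $\mathbf{UL}(X)$ to the disjoint closed sets $\{x\}$ and $A$ to obtain a continuous $[0,1]$-valued function separating them. This shows $X$ is completely regular, and since $X$ is $T_{1}$, $X$ is Tychonoff.

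Having established that $X$ is Tychonoff, I would simply invoke the equivalence in Theorem 3.27: under $\mathbf{UFT}$, every Tychonoff space has its \v{C}ech-Stone compactification. This yields the existence of $\beta X$ for $X$, completing the proof.

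There is essentially no obstacle here; the statement is a packaging of earlier results. If one prefers a more self-contained route that does not route through the class-to-set argument of Theorem 3.27, one could alternatively argue as follows: by Theorem 3.16, $\mathbf{UFT}$ implies that every Tychonoff space has its functional \v{C}ech-Stone compactification, so $\beta^{f}X$ exists for our $X$; then Corollary 3.14 (which requires $\mathbf{UL}(X)$ and normality of $X$) yields $\beta X\thickapprox\beta^{f}X$, and in particular $\beta X$ exists. Either route works, but the first is shorter and more direct.
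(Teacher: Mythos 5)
Your proof is correct and follows essentially the same route as the paper: deduce that $X$ is Tychonoff from $\mathbf{UL}(X)$ together with $T_1$, then invoke Theorem 3.27 under $\mathbf{UFT}$ (the paper even notes both of the routes you mention, via Theorem 3.27 or via Corollary 3.14 and Theorem 3.16).
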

\begin{proof} We assume $\mathbf{ZF+UFT+UL}$. Let $X$ be a normal $T_1$ space. That $X$ is Tychonoff follows from $\mathbf{UL}(X)$. Since $\mathbf{UFT}$ holds, it follows from Theorem 3.27 or from Corollary 3.14 and Theorem 3.16 that $\beta X$ exists. 
\end{proof}

For a non-empty  topological space $X$, let us consider $C^{\ast }(X)$ with
the metric of uniform convergence $\rho _{u}$ defined by the equality $\rho
_{u}(f,g)=\sup \{|f(x)-g(x)|:x\in X\}$ for $f,g\in C^{\ast }(X)$. The
topology $\tau (\rho _{u})$ induced by $\rho _{u}$ is called the \emph{topology of
uniform convergence} in $C^{\ast }(X)$.

\begin{defi}
A set $A\subseteq C^{\ast}(X)$ will be called:

\begin{enumerate}
\item[(i)] \emph{sequentially closed} in $C^{\ast}(X)$ if, for each uniformly
convergent on $X$ sequence $(f_n)$ of functions from $A$, the limit function 
$f=\lim_{n\to +\infty} f_n$ belongs to $A$;

\item[(ii)] \emph{uniformly closed} in $C^{\ast}(X)$ if $A$ is closed with respect
to the topology of uniform convergence in $C^{\ast}(X)$.
\end{enumerate}
\end{defi}

Of course, every uniformly closed subset of $C^{\ast}(X)$ is sequentially
closed. It follows from Theorem 4.54 of \cite{Her} that it may not be true
in a model of $\mathbf{ZF}$ that every sequentially closed subset of $%
C^{\ast}(X)$ is uniformly closed. The following theorem of $\mathbf{ZFC}$
can be deduced immediately from Theorem 2.12 of \cite{W1}:

\begin{thm}
$\mathbf{[ZFC]}$ If a compactification $\gamma X$ of a non-empty 
topological space $X$ is generated by a set $F\in \mathcal{E}(X)$, then $%
C_{\gamma }(X)$ is the smallest (with respect to inclusion) sequentially
closed subalgebra of $C^{\ast }(X)$ which contains $F$ and all constant
functions from $C^{\ast }(X)$.
\end{thm}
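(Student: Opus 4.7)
The plan is to let $A(F)$ denote the smallest sequentially closed subalgebra of $C^{\ast}(X)$ containing $F$ and all constants, and to establish the two inclusions $A(F)\subseteq C_{\gamma}(X)$ and $C_{\gamma}(X)\subseteq A(F)$ separately. Throughout, I would use that $\gamma X$ is homeomorphic to $e_{F}X\subseteq\mathbb{R}^{F}$, so $\gamma X$ is a compact Hausdorff space, and each $f\in F$ has a unique continuous extension $f^{\gamma}\in C(\gamma X)$ obtained by composing with the projection $\pi_{f}$.

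For the inclusion $A(F)\subseteq C_{\gamma}(X)$, I would verify that $C_{\gamma}(X)$ is itself a sequentially closed subalgebra of $C^{\ast}(X)$ containing $F$ and the constants, after which minimality gives the containment. The algebraic part is routine: extensions respect addition, multiplication, and scalars, and constants extend trivially; the inclusion $F\subseteq C_{\gamma}(X)$ follows directly from the definition of generation by $F$. For sequential closure, suppose $f_{n}\in C_{\gamma}(X)$ with $f_{n}\to f$ uniformly on $X$. Then the extensions $f_{n}^{\gamma}\in C(\gamma X)$ form a sequence that is uniformly Cauchy on the dense set $X$; by continuity, they are uniformly Cauchy on all of $\gamma X$, and since $\mathbb{R}$ is complete they converge uniformly to some $g\in C(\gamma X)$. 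Restricting to $X$ yields $g|_{X}=f$, so $f\in C_{\gamma}(X)$.

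For the reverse inclusion, I would invoke the Stone--Weierstrass theorem (available in full strength in $\mathbf{ZFC}$) on the compact Hausdorff space $\gamma X$. Set $A(F)^{\gamma}=\{h^{\gamma}:h\in A(F)\}\subseteq C(\gamma X)$. This is a subalgebra containing the constants and the set $F^{\gamma}$. The family $F^{\gamma}$ separates points of $\gamma X$, because if $y_{1}\neq y_{2}$ in $\gamma X\thickapprox e_{F}X$, then the coordinates of the corresponding points of $e_{F}X\subseteq\mathbb{R}^{F}$ differ in some slot $f\in F$, so $f^{\gamma}(y_{1})\neq f^{\gamma}(y_{2})$. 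Stone--Weierstrass then gives that $A(F)^{\gamma}$ is uniformly dense in $C(\gamma X)$. Now given $g\in C_{\gamma}(X)$ with extension $g^{\gamma}\in C(\gamma X)$, choose $h_{n}\in A(F)$ with $h_{n}^{\gamma}\to g^{\gamma}$ uniformly on $\gamma X$; restricting to $X$ gives $h_{n}\to g$ uniformly on $X$, and sequential closedness of $A(F)$ forces $g\in A(F)$.

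The main obstacle is really just the bookkeeping around the extension map $h\mapsto h^{\gamma}$: one must know that this map is a well-defined algebra homomorphism, that uniform convergence on the dense subset $X$ transfers to uniform convergence on the compact extension, and that the image subalgebra inherits the point-separation property from $F$ via the defining homeomorphism $\gamma X\thickapprox e_{F}X$. Once these pieces are in place, Stone--Weierstrass supplies the approximation step and sequential closedness of $A(F)$ closes the argument.
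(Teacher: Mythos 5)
Your proof is correct, but it is worth noting that the paper itself does not prove Theorem 3.30 at all: it simply cites Theorem 2.12 of \cite{W1} and remarks that the original argument there uses the axiom of choice. So your write-up is a genuine, self-contained alternative. The two inclusions are handled soundly: $C_{\gamma}(X)$ is indeed a sequentially closed subalgebra containing $F$ and the constants (the key point, which you state correctly, is that the restriction map $C(\gamma X)\to C_{\gamma}(X)$ is a sup-norm isometry because $X$ is dense in the Hausdorff space $\gamma X\thickapprox e_{F}X$, so uniform Cauchyness transfers back and forth), and the reverse inclusion follows from Stone--Weierstrass applied to $A(F)^{\gamma}$, which separates points of $\gamma X$ via the coordinate projections $f^{\gamma}=\pi_{f}\circ h$. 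One small gloss you could add: the ``smallest'' sequentially closed subalgebra $A(F)$ exists because the family of all such subalgebras is non-empty ($C^{\ast}(X)$ itself qualifies) and closed under intersection. More importantly, given that the paper explicitly flags as open whether this theorem survives in $\mathbf{ZF}$, it would strengthen your exposition to locate exactly where choice enters your argument: (a) the selection of the approximating sequence $(h_{n})$ from the non-empty sets $\{h\in A(F):\rho_{u}(h^{\gamma},g^{\gamma})<1/n\}$ uses countable choice, and (b) the standard proofs of Stone--Weierstrass for an arbitrary compact Hausdorff space involve choosing, for each point, a function witnessing an interpolation condition before extracting a finite subcover, which is again a choice-dependent step. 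Your proof is therefore a legitimate $\mathbf{ZFC}$ proof, consistent with the paper's framing, but it does not (and is not claimed to) settle the $\mathbf{ZF}$ question.
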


It is still unknown whether Theorem 3.30 can be proved in $\mathbf{ZF}$. In
the original proof to Theorem 2.12 in \cite{W1}, the axiom of choice was
used. Theorem 3.30 is a useful tool for investigations of Hausdorff
compactifications in every model of $\mathbf{ZFC}$.

\textbf{Conclusions.} In this article, we have proved a considerable number of theorems on Hausdorff compactifications with the absence of the axiom of choice.  We have posed non-trivial open problems that are of fundamental importance in $\mathbf{ZF}$-theory of Hausdorff compactifications. More research is needed to solve the problems in a not-too-distant future.

\bigskip 

\end{document}